\newcommand{\emr}[1]{{#1}}   
\def\comment#1{}
\newcommand{\R}{\mathbb{R}}
\newcommand{\N}{\mathbb{N}}
\newcommand{\Z}{\mathbb{Z}}
\newcommand{\Q}{\mathbb{Q}}
\renewcommand{\P}{\mathsf{P}^1}
\newcommand{\T}{\mathbb{S}^1}
\newcommand{\Diff}{\mathsf{Diff}}
\newcommand{\Homeo}{\mathsf{Homeo}}
\newcommand{\cP}{\mathcal{P}}
\renewcommand{\a}{\alpha}
\newcommand{\PSL}{\mathsf{PSL}}
\newcommand{\SL}{\mathsf{SL}}
\newcommand{\BS}{\mathsf{BS}}
\newcommand{\mP}{\mathsf{P}}
\newcommand{\cH}{\mathcal{H}}
\newcommand{\Fix}{\mathsf{Fix}}
\newcommand{\0}{\mathtt{0}}
\newcommand{\1}{\mathtt{1}}
\DeclareMathOperator{\Tr}{\mathsf{Tr}}
\newtheorem{thm}{Theorem}[section]
\newtheorem*{thm*}{Theorem}
\newtheorem*{claim}{Claim}
\newtheorem{lem}[thm]{Lemma}
\newtheorem{prop}[thm]{Proposition}
\newtheorem{cor}[thm]{Corollary}
\theoremstyle{definition}
\newtheorem{dfn}[thm]{Definition}
\theoremstyle{remark}
\newtheorem{rem}[thm]{Remark}
\newtheorem{ex}[thm]{Example}
\newtheorem{ques}[thm]{Question}
\title{Hyperbolicity as an obstruction to smoothability for one-dimensional actions}
\date{}
\author{Christian Bonatti\thanks{CNRS.} \thanks{Institut de Math\'ematiques de Bourgogne, Universit\'e Bourgogne Franche-Comt\'e, CNRS UMR 5584,
9 av.~Alain Savary,
21000 Dijon, France.} \and Yash Lodha\thanks{EPFL SB MATH EGG 
MA C3 584 (Batiment MA) 
Station 8 
CH-1015 Lausanne, CH-1015, Switzerland.} \and Michele Triestino\footnotemark[2]}
\begin{document}
\maketitle

\begin{abstract}
Ghys and Sergiescu proved in the $80$s that Thompson's group $T$, and hence $F$, admits actions by $C^{\infty}$ diffeomorphisms of the circle .
They proved that the standard actions of these groups are topologically conjugate to a group of $C^\infty$ diffeomorphisms. 
Monod defined a family of groups of piecewise projective homeomorphisms,
and Lodha-Moore defined finitely presentable groups of piecewise projective homeomorphisms. These groups are of particular interest because they are nonamenable and contain no free subgroup.
In contrast to the result of Ghys-Sergiescu, we prove that the groups of Monod and Lodha-Moore are not topologically conjugate to a group of $C^1$ diffeomorphisms.

Furthermore, we show that the group of Lodha-Moore has no nonabelian $C^1$~action on the interval.
We also show that many Monod's groups $H(A)$, for instance when $A$ is such that $\PSL(2,A)$ contains a rational homothety $x\mapsto \tfrac{p}{q}x$,
do not admit a $C^1$~action on the interval. 
The obstruction comes from the existence of hyperbolic fixed points for $C^1$ actions. 
With slightly different techniques, we also show that some groups of piecewise affine homeomorphisms of the interval or the circle are not smoothable.
\footnote{\textbf{MSC\textup{2010}:} Primary 37C85, 57M60. Secondary 43A07, 37D40, 37E05.
}

\end{abstract}

\tableofcontents

\section{Introduction}

A few examples are known of groups that admit no sufficiently smooth action on a one-dimensional manifold. Following the direction of Zimmer program,
typical examples come from lattices in higher rank Lie groups \cite{burger-monod,witte,ghys}, or more generally from groups with Kazhdan's property $(T)$
\cite{navas(T),navas-rel}. Other interesting examples appear in \cite{forcing,calegari,parwani,navas-locind,baik-kim-koberda}.

In this work we address the problem of the existence of \emph{smooth actions of groups of piecewise projective homeomorphisms of the real line}. Our
principal interest comes from the existence of groups of this kind which are negative solutions to the so-called Day-von Neumann problem, as shown
by Monod and Lodha-Moore \cite{monod,LodhaMoore}. On the other hand, partially motivated by his work on Kazhdan groups acting on the circle,
 Navas raised the problem to find obstructions for a group of piecewise linear homeomorphisms of the interval to admit smooth actions 
 (\textit{cf.}~\cite{navas-locind,hyperbolic}). With this work, we illustrate relatively elementary tools which apply to a large variety of 
 examples of such groups. Our techniques rely on some classical facts on one-dimensional dynamics and the recent work by Bonatti, 
 Navas, Rivas and Monteverde on actions of abelian-by-cyclic groups \cite{hyperbolic}.   

A classical obstruction to have $C^1$ actions on the interval is Thurston's Stability Theorem \cite{Th}:
a group of $C^1$ diffeomorphisms of the interval is \emph{locally indicable}, 
namely every finitely generated subgroup has a nontrivial morphism to $\Z$.
This obstruction does not apply in our setting: 
the group of piecewise projective homeomorphisms of the real line is locally indicable.
 Therefore our results exhibit new examples of locally indicable groups that have no $C^1$ action on the interval. 

\smallskip

As an appetizer, even before introducing the notions and definitions which are necessary for presenting our main results, 
we start with two results whose statements are very easy to understand, and which illustrate the spirit of the paper. 
Fix $\lambda>1$ and consider:\begin{itemize}
\item the linear map $f_\lambda\colon \R\to\R$ defined as $x\mapsto \lambda x$,
\item the map $h_\lambda\colon \R\to \R$ defined as
$h_\lambda(x)=\begin{cases}
x&\text{if }x\le 0,\\
\lambda x&\text{if }x>0,
\end{cases}$
\item the translation $g\colon x\mapsto x+1$.
\end{itemize}
Let $G_\lambda$ be the subgroup $\langle f_\lambda,g,h_\lambda\rangle\subset \Homeo_+(\R)$.

\begin{thm}\label{t:main1}
For any $\lambda>1$ which is rational (in formula: $\lambda\in \Q\cap (1,+\infty)$)
and any morphism $\rho\colon G_\lambda \to \Diff^1_+([0,1])$ one has :
\begin{center}
the commutator $[g,h_\lambda gh_\lambda^{-1}]$ belongs to the kernel of $\rho$. 
\end{center}
In particular $\rho$ cannot be injective.

The same holds for any morphism $\varphi\colon G_\lambda\to \Diff^1_+(\T)$, where $\T$ is the circle.
\end{thm}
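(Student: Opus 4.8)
The plan is to convert the rationality of $\lambda$ into a Baumslag--Solitar relation and then feed it into the $C^1$ hyperbolicity machinery of \cite{hyperbolic}. Write $\lambda=p/q$ with coprime $p>q\ge 1$, denote by $t_c$ the translation $x\mapsto x+c$, and set $\beta=t_{1/q}$. Since $g=t_1=\beta^{q}$ while $f_\lambda g f_\lambda^{-1}=t_{p/q}=\beta^{p}$, inside $G_\lambda$ one has the relation
\[ f_\lambda\,\beta^{q}\,f_\lambda^{-1}=\beta^{p}, \]
a copy of $\BS(q,p)$ carried by $f_\lambda$ and $\beta$. Two elementary identities drive the rest: first, $f_\lambda$ and $h_\lambda$ commute (both are dilations fixing $0$); second, $f_\lambda^{\,n} g f_\lambda^{-n}=t_{\lambda^{n}}$, so that, writing $\tilde g:=h_\lambda g h_\lambda^{-1}$, one has $f_\lambda^{\,n}\tilde g f_\lambda^{-n}=h_\lambda\,(f_\lambda^{\,n} g f_\lambda^{-n})\,h_\lambda^{-1}$. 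In the model on $\R$ the commutator $c:=[g,\tilde g]$ is supported in a bounded interval straddling $0$, and its conjugates $f_\lambda^{-n} c f_\lambda^{\,n}$ are supported in intervals shrinking to $\{0\}$ as $n\to+\infty$.

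Now fix a morphism $\rho\colon G_\lambda\to\Diff^1_+([0,1])$ and set $F=\rho(f_\lambda)$, $B=\rho(\beta)$, $H=\rho(h_\lambda)$. If $B=\mathrm{id}$ then $\rho(g)=B^{q}=\mathrm{id}$ and $\rho(c)=\mathrm{id}$ trivially, so assume $B\ne\mathrm{id}$. The relation $FB^{q}F^{-1}=B^{p}$ is exactly the input of the abelian-by-cyclic analysis of \cite{hyperbolic}: it forces $F$ to possess a \emph{hyperbolic} fixed point, i.e.\ a point $x_0$ with $F(x_0)=x_0$ and $F'(x_0)\ne 1$; replacing $f_\lambda$ by $f_\lambda^{-1}$ if necessary, we may assume $F'(x_0)>1$, so $F^{-1}$ contracts a one-sided neighbourhood of $x_0$. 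Note that the hyperbolicity is necessarily carried by $F$ and not by $B$: differentiating $FB^{q}F^{-1}=B^{p}$ at any common fixed point $z$ gives $B'(z)^{q}=B'(z)^{p}$, hence $B'(z)=1$, so $B$ is parabolic there. The point $x_0$ is the avatar of the fixed point $0$ of $f_\lambda$ in the model; it is a tool, not yet a contradiction, since $\langle f_\lambda,g\rangle$ is metabelian and does admit $C^1$ actions.

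The heart of the matter is to show $\rho(c)=\mathrm{id}$. Because $g=\beta^{q}$ and, on the positive axis, $\tilde g$ coincides with $\beta^{p}$, the only obstruction to the commutation of $g$ and $\tilde g$ in the model is concentrated at the transition point $0$, i.e.\ under $\rho$ at the hyperbolic fixed point $x_0$. The plan is then to renormalise: the elements
\[ F^{-n}\rho(c)F^{\,n}=\rho\!\left(f_\lambda^{-n} c f_\lambda^{\,n}\right)=\bigl[\rho(t_{\lambda^{-n}}),\,H\rho(t_{\lambda^{-n}})H^{-1}\bigr] \]
have supports dynamically driven into $x_0$ by the contraction $F^{-1}$, and the bounded-distortion control available near a $C^1$ hyperbolic fixed point shows that their $C^1$ size is governed by the germs of $g$ and $\tilde g$ at $x_0$. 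Since any two $C^1$ germs fixing $x_0$ have commuting (scalar) linear parts, the germ of $\rho(c)$ at $x_0$ must be trivial, and propagating this triviality along the shrinking supports forces $\rho(c)=\mathrm{id}$ on $[0,1]$. Hence $c\in\ker\rho$, and as $c\ne\mathrm{id}$ in $\Homeo_+(\R)$ the morphism $\rho$ is not injective. Turning the hyperbolic contraction into a genuine $C^1$ estimate that \emph{kills} the commutator, rather than merely shrinking its conjugates, is the main obstacle, and is precisely where the technology of \cite{hyperbolic} is indispensable.

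For the circle, let $\varphi\colon G_\lambda\to\Diff^1_+(\T)$. Conjugacy-invariance of the rotation number applied to $\varphi(f_\lambda)\varphi(\beta)^{q}\varphi(f_\lambda)^{-1}=\varphi(\beta)^{p}$ gives $q\,\mathrm{rot}(\varphi(\beta))=p\,\mathrm{rot}(\varphi(\beta))$ in $\R/\Z$, so $(p-q)\,\mathrm{rot}(\varphi(\beta))=0$ and $\mathrm{rot}(\varphi(\beta))$ is rational; thus $\varphi(\beta)$ cannot be minimal and the abelian group of translations has a finite orbit. Restricting to the complement of a suitable finite invariant set returns us to a situation on the interval (and the local nature of the argument of the previous step again produces the hyperbolic fixed point of $\varphi(f_\lambda)$), so that the interval case applies verbatim and $\varphi(c)=\mathrm{id}$. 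The interval case thus carries all the substance.
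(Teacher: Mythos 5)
Your setup is sound: the relation $f_\lambda\beta^{q}f_\lambda^{-1}=\beta^{p}$ with $\beta$ the translation by $1/q$ (which does lie in $A_\lambda$, since $\Z[p/q,q/p]=\Z[1/pq]$), the reduction to the case where $\rho(A_\lambda)$ is nonabelian, the existence of a hyperbolic fixed point $x_0$ for $F=\rho(f_\lambda)$ with derivative $\lambda$ there (this is Theorem~\ref{t:BMNR} together with Proposition~\ref{p:affine}, both imported from \cite{hyperbolic}), and the observation that $c=[g,h_\lambda gh_\lambda^{-1}]$ is compactly supported with conjugates $f_\lambda^{-n}cf_\lambda^{n}$ shrinking to $0$ in the model. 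But the step you yourself flag as ``the main obstacle'' is exactly the step that is missing, and the technology you defer to does not supply it. First, there is no bounded-distortion control near a hyperbolic fixed point in $C^1$ regularity --- that is a $C^{1+\alpha}$ phenomenon (cf.\ Remark~\ref{r:C1+}, where the authors note that the $C^{1+\alpha}$ case is easy via Szekeres for precisely this reason). Second, the fact that the linear parts of two germs at $x_0$ commute only yields $\rho(c)'(x_0)=1$; a $C^1$ germ with derivative $1$ need not be trivial, and ``propagating triviality along the shrinking supports'' is a non sequitur: any nontrivial diffeomorphism supported away from $x_0$ also has conjugates whose supports are driven into $x_0$ by $F^{-1}$. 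Third, \cite{hyperbolic} concerns only the abelian-by-cyclic group $A_\lambda=\langle f_\lambda,g\rangle$ and says nothing about the piecewise element $h_\lambda$; controlling $\rho(h_\lambda)$ is the new content of this theorem, so it cannot be outsourced to that reference.

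The paper closes this gap by a different, genuinely $C^1$ argument (Proposition~\ref{p:noglobal}). Since a $C^1$ diffeomorphism of $[0,1]$ has only finitely many fixed points with derivative bounded away from $1$ (Lemma~\ref{FinitelyMany}), any element commuting with $F$ must fix $x_0$ (Lemma~\ref{CommutingHyp}); hence $\rho(a_\pm)$ both fix $x_0$, the product of their derivatives there is $\lambda$, and one of them, say $h=\rho(a_-)$, is hyperbolic at $x_0$ and therefore has no other interior fixed point. The relations $[a_-,t_ra_+t_r^{-1}]=1$ for $r>0$, combined with the density of the positive elements of $\Z[\lambda,\lambda^{-1}]$, then force $k=\rho(a_+)$ to be the identity on $[0,x_0]$; and if $k$ were nontrivial on $[x_0,1]$, a small translate of a pair of successive fixed points of $k$ would produce a linked pair, so the Deroin--Kleptsyn--Navas/Sacksteder theorem (Proposition~\ref{p:linked_hyper}) would yield an element with a hyperbolic fixed point in $(x_0,1)$ commuting with $h$, contradicting the previous step. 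This is where $\rho(c)=\mathrm{id}$ actually comes from, and your sketch contains no substitute for it. (Your circle reduction via rotation numbers is in the same spirit as the paper's remark, but it inherits the same gap.)
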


In fact, we get the stronger conclusion that for any representation $\rho\colon G_\lambda \to \Diff^1_+([0,1])$, the image $\rho(G_\lambda)$ is a metabelian group (that is, a solvable group with abelian derived subgroup).

\emr{The same occurs for a more general class of algebraic numbers, that we call \emph{Galois hyperbolic} (see Definition~\ref{HyperbolicAlgebraic} and Theorem~\ref{t:main1Hyp}).}
We do not know if the same occurs for $\lambda>1$ \emr{not Galois hyperbolic (see Remark~\ref{couterex})}. Nevertheless, consider the natural realization  $\rho_0\colon G_\lambda\to \Homeo_+(\T)$ defined as follows:
\begin{itemize}
 \item one considers $\T$ as being the projective space $\R \P$,
 \item $\rho_0(f_\lambda)$ acts on $\T$ as the projective action of the matrix 
$\begin{pmatrix}
    \lambda&0\\
    0&1
   \end{pmatrix}$,

\item $\rho_0(g)$ acts on $\T$ as the projective action of the matrix 
 $\begin{pmatrix}
    1&1\\
    0&1
\end{pmatrix}$,

\item $\rho_0(h_\lambda)$ coincides with $\rho_0(f_\lambda)$ on the half circle $[0, +\infty]$ and  with the identity map on the half circle $[-\infty, 0]$. 
\end{itemize}
\begin{thm}\label{t:main2} Fix an arbitrary real number $\lambda>1$.
With the notation as above, it does not exist any homeomorphism $\phi\colon \T\to \T$ so that 
$\phi\rho_0(f_\lambda)\phi^{-1}$, $\phi\rho_0(h_\lambda)\phi^{-1}$ and $\phi\rho_0(g)\phi^{-1}$ belong to $\Diff_+^1(\T)$. 
\end{thm}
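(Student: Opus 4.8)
The plan is to argue by contradiction: suppose there is a homeomorphism $\phi\colon\T\to\T$ such that $\tilde f=\phi\,\rho_0(f_\lambda)\,\phi^{-1}$, $\tilde g=\phi\,\rho_0(g)\,\phi^{-1}$ and $\tilde h=\phi\,\rho_0(h_\lambda)\,\phi^{-1}$ all lie in $\Diff^1_+(\T)$. Since $\phi$ conjugates the whole action into $\Diff^1_+(\T)$, every statement about $C^1$ actions may be applied to the images. I single out the two points $p=\phi(\infty)$ and $q=\phi(0)$, where $\infty,0\in\R\P=\T$ are the fixed points of the projective map $\rho_0(f_\lambda)$. All three diffeomorphisms $\tilde f,\tilde g,\tilde h$ fix $p$, while $\tilde f$ and $\tilde h$ also fix $q$, and $p$ is topologically attracting for $\tilde f$ on both sides.

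First I would extract a derivative constraint at $p$ from the broken nature of $h_\lambda$. Set $c=\rho_0(f_\lambda)\,\rho_0(h_\lambda)^{-1}$. A direct computation shows that $\rho_0(h_\lambda)$ is the identity on the arc $[-\infty,0]$ while $c$ is the identity on the arc $[0,\infty]$, that the two maps are supported on complementary arcs, and that $\rho_0(f_\lambda)=\rho_0(h_\lambda)\,c$ with the two factors commuting. Now $\tilde h$ and $\tilde c=\phi c\phi^{-1}$ are $C^1$ and equal to the identity on nondegenerate arcs of which $p$ (respectively $q$) is a boundary point; continuity of the derivative then forces $\tilde h'(p)=\tilde c'(p)=1$, and by the chain rule at the fixed point $p$ one gets $\tilde f'(p)=\tilde h'(p)\,\tilde c'(p)=1$ (and likewise $\tilde f'(q)=1$). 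Thus $p$ is a \emph{parabolic} fixed point of $\tilde f$. Note this step uses nothing about $\lambda$ beyond $\lambda>1$.

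Next I would feed the abelian-by-cyclic structure of $\langle f_\lambda,g\rangle$ into the rigidity results of \cite{hyperbolic}. A matrix computation gives $\rho_0(f_\lambda)\,\rho_0(g)\,\rho_0(f_\lambda)^{-1}=\rho_0(f_\lambda g f_\lambda^{-1})$, the translation $x\mapsto x+\lambda$; iterating, the normal closure of $g$ in $\langle f_\lambda,g\rangle$ is the abelian group of translations $\{x\mapsto x+s : s\in\Z[\lambda,\lambda^{-1}]\}$, with conjugation by $f_\lambda$ acting as multiplication by $\lambda$. Hence $\langle f_\lambda,g\rangle$ is abelian-by-cyclic, its abelian part acts nontrivially near $p$, and $p$ is a boundary point of the support of that abelian part. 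Localizing at $p$ (cutting the circle there) and invoking \cite{hyperbolic}, the cyclic generator must be \emph{hyperbolic} at such a boundary point, i.e.\ $\tilde f'(p)\neq1$. This contradicts the previous paragraph and finishes the proof, uniformly in $\lambda>1$.

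The heart of the matter, and the step I expect to be the main obstacle, is the forced hyperbolicity supplied by \cite{hyperbolic}: one must show that the affine relation cannot be realized in class $C^1$ with a parabolic cyclic generator while the abelian part remains nontrivial on one side of $p$. Morally this is a Kopell-type phenomenon: evaluating the conjugacy relations $\tilde f^{k}\tilde g\,\tilde f^{-k}=\phi\,\rho_0(f_\lambda^{k}g f_\lambda^{-k})\,\phi^{-1}$ (translations by $\lambda^{k}$) on a fundamental domain for $\tilde f$ near $p$ and using continuity of $\tilde f'$, the factors $\lambda^{k}\to\infty$ cannot be absorbed unless $\tilde f$ genuinely contracts, i.e.\ $\tilde f'(p)\neq1$. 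The delicate point, and the reason the hypotheses of \cite{hyperbolic} are essential, is that $C^1$ regularity (unlike $C^2$) does not by itself yield the distortion control behind Kopell's lemma, so the nontriviality of the abelian part near $p$ has to be exploited with care.
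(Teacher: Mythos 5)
Your first step is correct, and it is essentially the paper's argument read in reverse. Writing $c=\rho_0(f_\lambda)\rho_0(h_\lambda)^{-1}$ (this is the element $a_-$ of Definition~\ref{BrokeBS}), the facts that $\tilde h$ and $\tilde c$ are $C^1$ and coincide with the identity on the complementary arcs $\phi([-\infty,0])$ and $\phi([0,+\infty])$ do force, by continuity of the derivative and the chain rule at a common fixed point, $\tilde f'(q)=1$ at $q=\phi(0)$. At that moment you are already done: all three generators fix $\infty$, so cutting the circle at $\phi(\infty)$ yields a $C^1$ action of $A_\lambda=\langle f_\lambda,g\rangle$ on a closed interval, and Proposition~\ref{p:affine} (the statement from \cite[\S~4.3]{hyperbolic} that you are implicitly invoking, valid for every $\lambda>1$) says that $\tilde f$ must have derivative exactly $\lambda\neq 1$ at its \emph{interior} fixed point $\phi(0)$. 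That contradiction, which you record only in a parenthesis, is precisely the paper's proof of Theorems~\ref{t:affine} and~\ref{t:main2}; the paper phrases it contrapositively, namely that $\tilde f'(\phi(0))=\lambda$ forces the broken element $\tilde h=\phi\rho_0(h_\lambda)\phi^{-1}$ to have one-sided derivatives $1$ and $\lambda$ at $\phi(0)$, hence not to be $C^1$.

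The gap is in where you then choose to place the contradiction. You set $q$ aside and claim that the rigidity of \cite{hyperbolic} forces $\tilde f'(p)\neq 1$ at $p=\phi(\infty)$, the \emph{global} fixed point of the affine subgroup. That is not what the cited results give: the forced hyperbolicity of Proposition~\ref{p:affine} concerns only the interior fixed point of the homothety, the one separating the two half-lines on which the translation subgroup acts without fixed points, and nothing there constrains the derivative at the endpoint obtained by cutting at $\phi(\infty)$. Indeed, germs of affine actions at their global fixed point are not rigid in class $C^1$ (compare the remark after Theorem~\ref{t:affine} and \cite[Remark~4.14]{hyperbolic}: the obstruction disappears on the non-compactified line precisely because all the forcing is concentrated at $\phi(0)$ and uses $C^1$-regularity up to the endpoints as an ingredient, not as a conclusion). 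Your closing paragraph concedes that this ``Kopell-type'' step at $p$ is the main obstacle and is not established in $C^1$ regularity; as routed, the argument therefore does not close. The repair is immediate: keep your computation $\tilde f'(q)=1$ and contradict Proposition~\ref{p:affine} at $q=\phi(0)$.
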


In other words, the natural action of $G_\lambda$ on $\T$ is not \emph{smoothable}, and furthermore, \emr{if $\lambda>1$ is Galois hyperbolic}, then every $C^1$ action of $G_\lambda$ on the 
circle or the interval are (non-faithful) metabelian actions. 

For more precise statements, see Theorems  \ref{t:affine} and \ref{t:key}.

\smallskip

The paper is organized as follows. In Section \ref{s:Defs} we introduce the basic objects and fix some notation. In Section \ref{s:Mech} we roughly explain the different strategies that we develop in this work, showing which are the main applications.
In Section \ref{s:Motiv} we illustrate the main motivation of our work, which is the recent construction by Monod of nonamenable groups without free subgroups. In Section \ref{s:Monod} we study the $C^1$ actions of Monod's groups and the finitely presentable group defined by Lodha-Moore. Section \ref{s:Aff} contains the main part of this work, namely the study of $C^1$ actions of the groups $G_\lambda$ introduced above. Finally, in Section \ref{s:C2} we use different techniques that work in $C^2$ regularity.

\section{Some definitions and notation}
\label{s:Defs}

\begin{dfn}
Let $M$ be a manifold and $\Homeo(M)$ the group of homeomorphisms of $M$. 
A subgroup $G\subset \Homeo(M)$ is \emph{$C^r$-smoothable} ($r\ge 1$) if it is conjugate in $\Homeo(M)$ to a subgroup in $\Diff^r(M)$, the group of $C^r$ diffeomorphisms of $M$.
\end{dfn}

\begin{rem}\label{r:abstract}
Even if a certain subgroup $G\subset \Homeo(M)$ is not $C^r$-smoothable, it is still possible that the group $G$, \emph{as abstract group}, admits $C^r$ actions on the manifold $M$.
\end{rem}

Throughout this work we shall only be concerned by one-dimensional manifolds. 
We restrict our discussion to orientation-preserving homeomorphisms, which form a subgroup $\Homeo_+(M)$ of index two in $\Homeo(M)$.
We will not make much distinction between the groups $\Homeo_+(\R)$ and $\Homeo_+([0,1])$. 
Notice however that the groups $\Diff_+^r(\R)$ and $\Diff^r_+([0,1])$ are different and for this reason we sometimes identify the 
interval $[0,1]$ to the compactified real line $[-\infty,+\infty]$. Choosing the affine chart $t\mapsto [t:1]$, 
we consider $\R$ as the affine line in the projective space $\R\P\cong\R\cup \{\infty\}$, which is topologically the circle $\T$. 
The group $\Homeo_+(\R)$ can be identified to a subgroup of $\Homeo_+(\T)$, for instance as the stabiliser of the point $\infty$ of $\T\cong\R\P$.

The \emph{projective special linear group} $\PSL(2,\R)= \SL(2,\R)/\{\pm id\}$ naturally acts on the projective real line $\R\P$ by Möbius transformations: 
from now on, we shall always suppose that $\PSL(2,\R)$ acts on the circle in this way.

\begin{dfn}
A circle homeomorphism $h\in\Homeo_+(\R\P)$ is \emph{piecewise projective} if there exists a finite partition 
$\R\P=I_1\cup\ldots \cup I_\ell$ of the circle into intervals, such that every restriction $h\vert_{I_k}$, $k=1,\ldots,\ell$, 
coincides with the restriction of a M\"obius transformation.

A \emph{breakpoint} of $h$ is a point $b\in\R\P$ such that the restriction of $h$ to any neighbourhood of $b$ does not coincide 
with the restriction of a M\"obius transformation. 

The group of all orientation-preserving piecewise projective homeomorphisms of the circle is denoted by $\mP\mP_+(\R\P)$. 
Similarly, we define the group of piecewise projective homeomorphisms of the real line $\mP\mP_+(\R)$, identifying it to the stabiliser of $\infty$ inside $\mP\mP_+(\R\P)$.
\end{dfn}

We recall that a fixed point $p\in\R$ for a diffeomorphism $f\in\Diff^1_+(\R)$ is a \emph{hyperbolic fixed point} if $f$ 
has derivative at $p$ which is not $1$. We shall say that a subgroup $G\subset \Diff^1_+(\R)$ has hyperbolic fixed points if there exists 
an element $f\in G$ with hyperbolic fixed points.
This notion is related to the notion of \emph{hyperbolic elements} in $\PSL(2,\R)$.
A nontrivial projective transformation  in $\PSL(2,\R)$ has at most two fixed points.
If it has exactly two fixed points, it is called \emph{hyperbolic}, and if it has only one fixed point it is called \emph{parabolic}.  
A matrix $M$ in $\mathsf{SL}(2,\R)$ is hyperbolic if $|\Tr(M)|>2$,
parabolic if $|\Tr(M)|=2$ and elliptic if $|\Tr(M)|<2$.
Then the corresponding projective transformation is respectively hyperbolic, parabolic and elliptic.

Given a subgroup $\Gamma\subset \PSL(2,\R)$, we say that a real $r\in \mathbb{R}$ is a \emph{hyperbolic fixed point} for $\Gamma$
if there is a $\gamma\in \Gamma$ such that $\gamma$ is hyperbolic and $\gamma(r)=r$.
Similarly, we define the notion of a \emph{parabolic fixed point} for $\Gamma$.
We consider the sets $\cH_\Gamma$ and $\cP_\Gamma$ of \emph{hyperbolic fixed points} and \emph{parabolic fixed points} of elements of $\Gamma$, respectively.
When $\Gamma=\PSL(2,A)= \SL(2,A)/\{\pm Id\}$, for some subring $A\subset \R$, we simply write $\cH_A$ and $\cP_A$. 
Here $\SL(2,A)$ is the group of invertible $(2\times 2)$-matrices with determinant~$1$ and coefficients in $A$.

\smallskip

\emr{Let $\lambda\in \R$ be an algebraic real number of degree $d$ over $\Q$, and let $p_\lambda(t)=\frac{\a_0}{\a_d}+\frac{\a_1}{\a_d}t+\ldots +\frac{\a_{d-1}}{\a_d}t^{d-1}+t^d$, $\a_j\in \Z$, denote the associated minimal polynomial. The field $\Q(\lambda)$ is a $\Q$-vector space of dimension $d$, for which we fix $\{1,\lambda,\ldots,\lambda^{d-1}\}$ as preferred basis. With respect to this basis, multiplication by $\lambda$ on $\Q(d)$ is represented by the matrix
\begin{equation}\label{Companion}
C_\lambda=\left (\begin{array}{ccc|c}
0&\cdots&0 & -\a_0/\a_d \\
\hline 
&&& -\a_1/\a_d \\
&I_{d-1}& & \vdots \\
&&& -\a_{d-1}/\a_d
\end{array}\right )
\end{equation}
which is commonly named the \emph{Frobenius companion matrix} of $\lambda$. When $\lambda\neq 0$ then $\a_0\neq 0$, so that $C_\lambda$ is an invertible $d\times d$ matrix with rational coefficients. The minimal polynomial of $C_\lambda$ is exactly $p_\lambda$, so the eigenvalues of $C_\lambda$ are exactly the \emph{Galois conjugates} of $\lambda$, that is, all (complex) roots of $p_\lambda$.
\begin{dfn}\label{HyperbolicAlgebraic}
	A nonzero real number $\lambda\in \R$ is \emph{Galois hyperbolic} if it is algebraic and the companion matrix $C_\lambda$ has no eigenvalue of absolute value $1$. Equivalently, this means that all the Galois conjugates of $\lambda$ do not have absolute value $1$.
\end{dfn}
For instance, any rational $\lambda\neq 0,\pm 1$ is Galois hyperbolic, as well as any quadratic integer $\sqrt{m}\neq 0,1$, $m\in \N$. 
However not every real number is Galois hyperbolic. As an explicit nontrivial example \cite[\S~5]{hyperbolic}, the polynomial $p(t)=1+4t+4t^2+4t^3+t^4$ is irreducible over $\Q$, has two positive real roots $\lambda$, $1/\lambda$
and two roots of absolute value $1$. Hence $\lambda,1/\lambda$ are not Galois hyperbolic.}

\emr{Theorem~\ref{t:main1} holds for
this more general class of numbers.}
\begin{thm}\label{t:main1Hyp}
	For any Galois hyperbolic $\lambda>1$
	and any morphism $\rho\colon G_\lambda \to \Diff^1_+([0,1])$ one has :
	\begin{center}
		the commutator $[g,h_\lambda gh_\lambda^{-1}]$ belongs to the kernel of $\rho$. 
	\end{center}
	In particular $\rho$ cannot be injective.
	
	The same holds for any morphism $\varphi\colon G_\lambda\to \Diff^1_+(\T)$, where $\T$ is the circle.
\end{thm}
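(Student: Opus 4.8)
The plan is to prove the sharper statement that $\rho(w)=\mathrm{id}$ for the single element $w:=[g,\,h_\lambda g h_\lambda^{-1}]$; since $\rho(w)=[\rho(g),\rho(h_\lambda g h_\lambda^{-1})]$, this amounts to showing that $\rho(g)$ commutes with $\rho(h_\lambda g h_\lambda^{-1})$. Writing $\bar g:=h_\lambda g h_\lambda^{-1}$, a direct computation in $\Homeo_+(\R)$ gives that $\bar g$ equals $x\mapsto x+1$ on $(-\infty,-1]$, equals $x\mapsto \lambda(x+1)$ on $(-1,0]$, and equals $x\mapsto x+\lambda$ on $(0,+\infty)$. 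Consequently $g\bar g g^{-1}$ and $\bar g$ coincide outside the compact interval $[-1,1]$ but differ on it, so $w$ is a \emph{nontrivial} homeomorphism supported in a compact subinterval $J\subset(0,+\infty)$. In particular $w\neq\mathrm{id}$ in $G_\lambda$, and hence once $\rho(w)=\mathrm{id}$ is established the non-injectivity of $\rho$ follows at once.

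Two algebraic facts will drive the argument. First, $f_\lambda$ and $h_\lambda$ commute (both orderings send $x\le 0$ to $\lambda x$ and $x>0$ to $\lambda^2 x$), while $f_\lambda g f_\lambda^{-1}$ is the translation $x\mapsto x+\lambda$; thus $\langle f_\lambda,g\rangle$ is the affine group $\Z[\lambda,\lambda^{-1}]\rtimes_\lambda \Z$, in which $f_\lambda$ acts on the translation subgroup by multiplication by $\lambda$, that is, by the companion matrix $C_\lambda$ in the preferred basis. Galois hyperbolicity of $\lambda$ is exactly the assertion that $C_\lambda$ has no eigenvalue of modulus $1$, so this is a \emph{hyperbolic} abelian-by-cyclic group, to which the structure theory of Theorems \ref{t:affine} and \ref{t:key} applies; since $f_\lambda$ commutes with $h_\lambda$, the subgroup $\langle f_\lambda,\bar g\rangle=h_\lambda\langle f_\lambda,g\rangle h_\lambda^{-1}$ is a second copy of the same group sharing the generator $f_\lambda$. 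Second, because $f_\lambda$ acts as $x\mapsto\lambda x$, conjugation gives that $f_\lambda^{-n}w f_\lambda^{n}$ is supported in $\lambda^{-n}J$, which shrinks to the fixed point $0$ of $f_\lambda$ as $n\to+\infty$.

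For the crux, I would fix a $C^1$ action $\rho$, set $A=\rho(f_\lambda)$, and apply Theorem \ref{t:key} to $\langle f_\lambda,g\rangle$ to obtain a dichotomy: either the translation subgroup lies in $\ker\rho$ — in which case $\rho(g)$ and all its $A$-conjugates are trivial, and the $h_\lambda$-conjugated copy forces $\rho(\bar g)$ to be trivial on the relevant piece, giving $\rho(w)=\mathrm{id}$ directly — or $A$ has a hyperbolic fixed point $p$, repelling, at which the action is modeled through the semiconjugacy of Theorem \ref{t:affine} on the affine dynamics near $0$. In the second case $V_n:=A^{-n}\rho(w)A^{n}=\rho(f_\lambda^{-n}w f_\lambda^{n})$ is, for every $n$, conjugate to $\rho(w)$ and supported in a neighbourhood of $p$ shrinking to $\{p\}$. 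Here one invokes the hyperbolic fixed point obstruction of \cite{hyperbolic}: since $A$ is $C^1$ with $A'(p)\neq 1$, the $C^1$ germs at $p$ of the contracted commutators $V_n$ are controlled by $A'(p)$, and the $C^1$ distortion estimate valid in the hyperbolic setting (the substitute, in class $C^1$, for the $C^2$ Kopell lemma) forces $\rho(w)$ to have trivial germ at $p$, and then, propagating along $A$-orbits, $\rho(w)=\mathrm{id}$. I expect this step to be the main obstacle: making the $C^1$ estimate at the hyperbolic fixed point rigorous and uniform over all components on which the translations act nontrivially is precisely where Galois hyperbolicity (all conjugates off the unit circle) is indispensable and where the machinery of \cite{hyperbolic} must be deployed.

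This yields $w\in\ker\rho$, hence the non-injectivity of $\rho$; running the same analysis on every commutator between translations and their $h_\lambda$-conjugates should give the stronger conclusion that $\rho(G_\lambda)$ is metabelian. For a $C^1$ action $\varphi$ on the circle $\T$, I would first reduce to the interval: the subgroup $\langle f_\lambda,g\rangle$ is metabelian, hence amenable, so it preserves a probability measure on $\T$, and a standard argument using this together with the nonempty fixed-point set of $\varphi(f_\lambda)$ produces a point fixed by $\langle f_\lambda,g\rangle$; cutting $\T$ open there reduces the statement to the interval case already treated, giving $\varphi(w)=\mathrm{id}$.
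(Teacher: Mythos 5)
There is a genuine gap at the central step. Your setup is fine (the computation of $h_\lambda g h_\lambda^{-1}$, the compact support of $w=[g,h_\lambda gh_\lambda^{-1}]$, the identification of $\langle f_\lambda,g\rangle$ with $A_\lambda=\Z[\lambda,\lambda^{-1}]\rtimes_{C_\lambda}\Z$ and the role of Galois hyperbolicity), but the crux --- deducing $\rho(w)=\mathrm{id}$ once $\rho(f_\lambda)$ has a hyperbolic fixed point $p$ --- is left resting on a ``$C^1$ distortion estimate valid in the hyperbolic setting (the substitute, in class $C^1$, for the $C^2$ Kopell lemma).'' No such substitute is available: the absence of Kopell-type control in $C^1$ regularity is exactly the difficulty here, and the paper says as much in Remark~\ref{r:C1+} (the whole argument collapses to a few lines in $C^{1+\alpha}$ via Szekeres, but not in $C^1$). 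You flag this step yourself as ``the main obstacle,'' which is an accurate self-diagnosis: as written it is an appeal to a tool that does not exist. There is also a secondary issue in the same step: you assert that $\rho(f_\lambda)^{-n}\rho(w)\rho(f_\lambda)^n$ is supported in sets shrinking to $p$, but the support of $\rho(w)$ is not a priori known --- the conjugacy furnished by Theorem~\ref{t:BMNR} controls $\rho(A_\lambda)$ on certain invariant intervals, not $\rho(h_\lambda)$, and hence not $\rho(w)$, whose support could meet regions where that conjugacy says nothing.

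Separately, your invocation of Theorem~\ref{t:key} makes the argument either circular or redundant. In the paper, Theorem~\ref{t:main1Hyp} \emph{is} a corollary of Theorem~\ref{t:key}: on each invariant interval $I_i$ the action factors through $\rho_\pm$, which kills every compactly supported element such as $w$, and on the complement the action is abelian, which kills every commutator (and indeed kills $\rho(b)$ by Lemma~\ref{lem:abel_G}). So if you are allowed to quote Theorem~\ref{t:key}, you are done in three lines and the entire contracted-commutator argument is unnecessary; if you are not, you must supply its proof, which is where all the real work lives. That proof does not go through germ estimates at $p$: it shows that $\rho(a_\pm)$ fix the hyperbolic point $s_0$ (Lemma~\ref{CommutingHyp}), that $\rho(a_-)'(s_0)\cdot\rho(a_+)'(s_0)=\lambda$ forces one of them to be hyperbolic at $s_0$, then uses density of $\Z[\lambda,\lambda^{-1}]$ to make the other one trivial on one side of $s_0$, and the linked-pairs/Sacksteder argument (Proposition~\ref{p:linked_hyper}) to make it trivial on the other side --- concluding that the action factors through $\rho_\pm$ outright, after which $w$ dies for the trivial reason of compact support. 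None of these ingredients appears in your proposal. Your reduction of the circle case via an invariant measure for the amenable subgroup is reasonable and close in spirit to the paper's remark, but it is downstream of the unproven interval case.
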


\section{The mechanisms}
\label{s:Mech}
The aim of this work is to present three different techniques which provide a variety of examples of non-smoothable groups in $\mP\mP_+(\R)$. 
The three techniques rely on the \emph{rigid hyperbolicity} of the actions: there are subgroups $G\subset \Diff_+^1(\R)$ such that, 
no matter how one (topologically) conjugates them inside $\Diff^1_+(\R)$, will always have hyperbolic fixed points. 

More precisely, suppose that in $G\subset \Diff_+^1(\R)$ there is an element $f$ having a hyperbolic fixed point $p\in\R$. 
Consider another subgroup $\widetilde G\subset \Diff_+^1(\R)$ to which $G$ is topologically conjugate by some homeomorphism 
$\phi$: $\phi G\phi^{-1}=\widetilde G$. The point $\phi(p)$ is a fixed point for $\phi f \phi^{-1}$, 
but since $\phi$ is just a homeomorphism, we cannot ensure that it is a \emph{hyperbolic} fixed point. 
However, there are some topological mechanisms that guarantee hyperbolicity.

The first one is when there are \emph{linked pairs of fixed points} in $G$.
We now define this notion. 
Denote by $\Fix(g)$ the set of fixed points of a homeomorphism $g$.
A \emph{pair of successive fixed points} of $G$ is a pair $a,b\in \R$, $a<b$
such that there is an element $g\in G$ and $(a,b)$ is a connected component of $\R\setminus \Fix(g)$.
A \emph{linked pair of fixed points} consists of pairs $a,b$ and $c,d$ such that:
\begin{enumerate}[1)]
\item there are elements $f,g\in G$ such that $a,b$ is a pair of successive fixed points of $f$ and $c,d$ is a pair of successive fixed points of $g$;
\item either $\{a,b\}\cap (c,d)$ or $(a,b)\cap \{c,d\}$ is a point.
\end{enumerate}

In this case hyperbolicity is obtained by a probabilistic argument.
\emph{Some} element $h$ in the semigroup generated by $f$ and $g$ will have a hyperbolic fixed point \emph{somewhere}. 
This is the so-called Sacksteder's Theorem, in its version for $C^1$-pseudogroups \cite{DKNacta,navas-book}. This method applies to 
large groups of piecewise projective homeomorphisms, as the \emph{Monod's groups} (see Definition \ref{d:Monod}):
\begin{thm}\label{t:H(A)}
The following holds for Monod's groups $H(A)$ and $G(A)$.
\begin{enumerate}[(1)]
\item\label{i:Monod1} For any subring $A\subset \R$, Monod's groups $H(A)$ and $G(A)$ are not $C^1$-smoothable.
\item\label{i:Monod2} \emr{If $A$ contains $\sqrt{\lambda}^{\pm 1}$ 
for some Galois hyperbolic $\lambda>1$,} then there exists no injective morphism $\rho:H(A)\to \Diff_+^1([0,1])$.
\end{enumerate}
\end{thm}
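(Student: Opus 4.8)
The plan is to derive both parts from the rigid hyperbolicity of $C^1$ actions, handling part~\eqref{i:Monod1} through the linked-pairs/Sacksteder mechanism and part~\eqref{i:Monod2} by embedding a copy of $G_\lambda$ and invoking Theorem~\ref{t:main1Hyp}. Throughout I use that a unital subring satisfies $\Z\subset A$, hence $\PSL(2,\Z)\subset\PSL(2,A)$ and $H(\Z)\subset H(A)$, $G(\Z)\subset G(A)$; since a homeomorphism conjugating the larger group into $\Diff^1_+$ conjugates the smaller one by the same map, for part~\eqref{i:Monod1} it suffices to produce the obstruction inside these subgroups, where $\cP_A$ and $\cH_A$ are already rich.

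For part~\eqref{i:Monod1} I would first exhibit a linked pair of fixed points in the standard piecewise projective action. Using that every breakpoint must lie in $\cP_A$, I would build two elements $f,g\in H(A)$ supported on intervals $(a,b)$ and $(c,d)$ with endpoints in $\cP_A$ and with $a<c<b<d$, so that $(a,b)$ and $(c,d)$ are successive-fixed-point components satisfying the linking condition (here $(a,b)\cap\{c,d\}=\{c\}$). Linking is a purely combinatorial feature of the fixed-point sets, hence preserved if some homeomorphism $\phi$ carries $H(A)$ into $\Diff^1_+$. Applying Sacksteder's theorem in its $C^1$-pseudogroup form \cite{DKNacta,navas-book} to $\langle \phi f\phi^{-1},\phi g\phi^{-1}\rangle$ then yields an element $\psi=\phi h \phi^{-1}$, with $h$ in the semigroup generated by $f$ and $g$, having a hyperbolic fixed point $q$, say $\psi'(q)=\mu<1$. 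The contradiction is extracted from $q$: the group $\stab{\phi H(A)\phi^{-1}}{q}$ consists of $C^1$ diffeomorphisms fixing $q$ and contains the hyperbolic element $\psi$, so by the rigid-hyperbolicity analysis of Bonatti--Navas--Rivas--Monteverde \cite{hyperbolic} its germ at $q$ is forced into an affine, in particular metabelian, form. Conjugating back, $\stab{H(A)}{p}$ with $p=\phi^{-1}(q)$ would have to be metabelian, which contradicts the fact that point-stabilizers in Monod's group contain non-metabelian (indeed non-solvable) subgroups of piecewise projective maps supported near $p$.

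For part~\eqref{i:Monod2} I would instead realize $G_\lambda$ as an explicit subgroup of $H(A)$. Since $\sqrt\lambda^{\pm1}\in A$, the diagonal matrix $\mathrm{diag}(\sqrt\lambda,1/\sqrt\lambda)\in\PSL(2,A)$ acts as $f_\lambda\colon x\mapsto\lambda x$, the unipotent $\left(\begin{smallmatrix}1&1\\0&1\end{smallmatrix}\right)\in\PSL(2,A)$ acts as $g\colon x\mapsto x+1$, and the map $h_\lambda$, equal to $f_\lambda$ on $[0,+\infty]$ and to the identity on $[-\infty,0]$, is piecewise projective with breakpoints $0,\infty\in\cP_A$, hence lies in $H(A)$. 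Thus $G_\lambda=\langle f_\lambda,g,h_\lambda\rangle\subset H(A)$. Given any injective morphism $\rho\colon H(A)\to\Diff^1_+([0,1])$, its restriction to $G_\lambda$ is a $C^1$ action to which Theorem~\ref{t:main1Hyp} applies, since $\lambda$ is Galois hyperbolic; hence $[g,h_\lambda g h_\lambda^{-1}]\in\ker\rho$. Since this commutator is a nontrivial element of $G_\lambda\subset H(A)$ — which one checks directly on the explicit homeomorphisms — the morphism $\rho$ cannot be injective, a contradiction.

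The main obstacle is the passage from the Sacksteder fixed point to the contradiction in part~\eqref{i:Monod1}. Producing the linked pair is routine once the admissible breakpoints $\cP_A$ and the piecewise projective elements of $H(A)$ are understood, and the invariance of linking under conjugacy is formal; the delicate point is that Sacksteder only locates the hyperbolic fixed point $q$ \emph{somewhere}, so I must ensure that the corresponding point $p=\phi^{-1}(q)$ has a non-metabelian stabilizer in $H(A)$. This requires either confining the construction to an interval on which the local dynamics of $H(A)$ are non-metabelian at every point, or exploiting the homogeneity of the standard action to transport non-metabelianity to the unknown orbit of $p$; and making the statement of \cite{hyperbolic} apply to the stabilizer germ at $q$, rather than to a fixed abelian-by-cyclic model group, is the technical heart of the argument. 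Part~\eqref{i:Monod2}, by contrast, is essentially a reduction to Theorem~\ref{t:main1Hyp}, and only requires care in verifying that the three generators genuinely satisfy the defining constraints of $H(A)$ and that the distinguished commutator is nontrivial.
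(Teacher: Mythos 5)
Your part (2) is essentially the paper's argument: embed $G_\lambda$ into $H(A)$ via the homothety, the translation, and the broken homothety, then invoke Theorem~\ref{t:main1Hyp}. One slip there: the breakpoints of $h_\lambda$ must lie in $\cH_A$, not $\cP_A$ --- Monod's groups only allow breaking at \emph{hyperbolic} fixed points --- but this is harmless since $0$ and $\infty$ are fixed by the hyperbolic element $f_\lambda$. Part (1) also starts the way the paper does (produce a linked pair, apply Proposition~\ref{p:linked_hyper}), again with the same $\cP_A$/$\cH_A$ confusion in the construction of the supported elements; the real issue is your route from the Sacksteder point $q$ to a contradiction, which is exactly the step you flag as the technical heart, and it does not work.

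Two problems. First, there is no result in \cite{hyperbolic} (or anywhere) forcing the germ at a hyperbolic fixed point of the \emph{stabilizer} of that point in a group of $C^1$ diffeomorphisms to be affine or metabelian; such rigidity fails in $C^1$, and even in higher regularity Kopell/Szekeres constrain only the \emph{centralizer} of the hyperbolic element, not the full stabilizer. Second, the intended contradiction is backwards: Monod's key observation is that germs of point-stabilizers in $\mP\mP_+(\R)$ \emph{are} isomorphic to subgroups of the affine group, hence metabelian, while the full stabilizer $\stab{H(A)}{p}$ is of course non-solvable --- but conjugation matches full stabilizers with full stabilizers and germs with germs, so comparing a germ on one side with a group on the other yields nothing. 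The paper's actual mechanism is different and uses the defining feature of $H(A)$: pull $q$ back to $y=\phi^{-1}(q)$ and show $y\in\cH_\Z$ (either $y$ is a breakpoint, hence in $\cH_\Z$ by definition of the group, or the element coincides near $y$ with a single M\"obius map, which must be hyperbolic since $y$ is a topological attractor or repeller). Because $y\in\cH_\Z$, one may \emph{break} the element at $y$, obtaining $g_2\in H(\Z)$ equal to the identity on one side of $y$ and to the original element on the other; its conjugate then has one-sided derivatives $1$ and $\neq 1$ at $q$, contradicting smoothness. This one-sided breaking at a point of $\cH_A$ is the idea missing from your write-up, and it is why the theorem is stated for Monod's groups rather than for arbitrary groups with linked pairs.
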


\begin{rem}\emr{
	Condition \eqref{i:Monod2} on $A$ is equivalent to the fact that the group $\PSL(2,A)$ contains the homothety 
	\[f_\lambda\colon x\mapsto \lambda x,\]
	representing the matrix $\begin{pmatrix}
	\sqrt{\lambda} & 0 \\ 0 & \sqrt{\lambda}^{-1}
	\end{pmatrix}$.}
\end{rem}

The second one is when there is an \emph{exponential growth of orbits}. In this case we can ensure that a \emph{specific} point is always a hyperbolic fixed point. 
This applies for example to the dyadic affine group $\langle t\mapsto t+1, t\mapsto 2t\rangle$, which is isomorphic to the solvable Baumslag-Solitar group $\mathsf{BS}(1,2)$, 
as described in \cite{hyperbolic}. 
From this, it is easy to build examples of finitely generated groups in $\mP\mP_+(\R)$ which are not $C^1$-smoothable. This method applies to the finitely presentable \emph{Lodha-Moore group} (see \S~\ref{sc:LM}), for which we do not only prove that its action is not $C^1$-smoothable, but also that it has no nontrivial $C^1$ action on the interval:
\begin{thm}\label{t:LM}
Every morphism from the Lodha-Moore group $G_0$ to $\Diff_+^1([0,1])$ has abelian image.
\end{thm}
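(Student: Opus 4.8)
The goal is to show that for any morphism $\rho\colon G_0\to\Diff^1_+([0,1])$ the derived subgroup $[G_0,G_0]$ lies in $\ker\rho$, so that $\rho$ factors through the abelianization $G_0/[G_0,G_0]$ and hence has abelian image. The plan is to exploit the abelian-by-cyclic subgroups hidden inside $G_0$: the self-similar dyadic nature of the Lodha--Moore action produces a translation-like element $a$ and a homothety-like element $s$ generating an abelian-by-cyclic subgroup with the exponential-growth property, namely a copy of $\BS(1,2)$ realized by a relation of the form $sas^{-1}=a^2$. On such a subgroup I would invoke the rigidity established by Bonatti--Navas--Rivas--Monteverde \cite{hyperbolic}: whenever $\rho(a)\neq\mathrm{id}$, the exponential accumulation of the orbits of $\rho(a)$ forces $\rho(s)$ to have a \emph{rigid hyperbolic fixed point} $p$, that is $\rho(s)'(p)\neq1$, independently of the topological conjugacy class of the action.

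The heart of the argument is then to convert this hyperbolic fixed point into the vanishing of a genuine group element. Near a hyperbolic fixed point $p$ of $F=\rho(s)$ one has strong constraints on the centralizer: by the $C^1$ centralizer theory developed in \cite{hyperbolic}, any $H\in\Diff^1_+$ that commutes with $F$ and satisfies $H'(p)=1$ must be trivial on a one-sided neighbourhood of $p$. In $G_0$ the derived subgroup is generated by ``bump''-type commutators $[a,uau^{-1}]$ supported in intervals on which the relevant homothety has already been pinned down --- exactly in the spirit of the compactly supported commutator $[g,h_\lambda gh_\lambda^{-1}]$ of Theorem~\ref{t:main1} --- and each such element has trivial derivative at the fixed points under consideration. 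Applying the centralizer rigidity at $p$ (and at the symmetric hyperbolic fixed points produced by the conjugating elements $u$), I would conclude that the image of each such commutator is the identity near $p$.

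To upgrade ``trivial near $p$'' to ``trivial everywhere'' I would use the transitivity and self-similarity of the Lodha--Moore action: the family of $\BS(1,2)$ subgroups is rich enough that the hyperbolic fixed points so produced cover, after applying the group, the whole region supporting a given commutator, forcing that commutator entirely into $\ker\rho$. The cleanest way to finish is to observe that it suffices to produce a \emph{single} nontrivial element of $[G_0,G_0]$ in $\ker\rho$: using the simplicity of the derived subgroup of $G_0$ \cite{LodhaMoore}, the intersection $\ker\rho\cap[G_0,G_0]$ is a nontrivial normal subgroup of $[G_0,G_0]$ and therefore coincides with all of $[G_0,G_0]$. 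Either way one gets $[G_0,G_0]\subseteq\ker\rho$, so $\rho(G_0)$ is abelian.

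I expect the main obstacle to be precisely the dynamical-to-algebraic bridge and its propagation: first, making the centralizer statement at a hyperbolic fixed point fully rigorous in the merely $C^1$ category, where Kopell-type arguments are delicate and one must rely on the estimates of \cite{hyperbolic} rather than on $C^2$ linearization; and second, guaranteeing that the rigid hyperbolic fixed points are positioned so as to force at least one nontrivial commutator into the kernel. A secondary point to dispatch is the degenerate case $\rho(a)=\mathrm{id}$ for the chosen translation, which should be handled either by replacing $a$ with a suitable conjugate that acts nontrivially, or by reading the constraint directly off the abelianization.
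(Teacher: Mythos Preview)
Your overall strategy --- find a constrained subgroup, force a nontrivial element into $\ker\rho$, then use that every proper quotient of $G_0$ is abelian --- is exactly the paper's. The last step is fine (the paper cites \cite{commutators} rather than simplicity of $[G_0,G_0]$, but these are equivalent for the purpose at hand).

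The gap is in your middle step. The statement you attribute to \cite{hyperbolic}, that in $C^1$ any $H$ commuting with a diffeomorphism $F$ having a hyperbolic fixed point $p$ and satisfying $H'(p)=1$ must be the identity near $p$, is \emph{false} in mere $C^1$ regularity. This is precisely the content of Remark~\ref{r:C1+}: with $C^{1+\alpha}$ one could invoke Szekeres and argue via the centralizer flow, but in $C^1$ the centralizer of a hyperbolic contraction can be wild, and no such statement is available. So your ``dynamical-to-algebraic bridge'' does not exist as stated, and the propagation argument built on it collapses.

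The paper circumvents this as follows. It does not embed only $\BS(1,2)$ into $G_0$: Lemma~\ref{l:bbs_in_G0} exhibits explicit elements $y_{\1\0\0}^{-1}y_{\1\0\1}$, $y_{\1\0\1}$, $x_{\1\0}$ generating a copy of the \emph{full} group $G_2=\langle f_2,g,h_2\rangle$, including the piecewise element $h_2=a_+$. One then applies Theorem~\ref{t:main1} as a black box to put $[g,h_2gh_2^{-1}]$ in the kernel. The proof of that theorem (Section~\ref{s:Aff}) replaces your centralizer claim by two ingredients: (i) Theorem~\ref{t:BMNR} from \cite{hyperbolic}, which forces the affine subgroup $A_2$ to act as the standard affine action on each minimal invariant interval, pinning $\rho(a)'$ at the interior fixed point to exactly $\lambda=2$; and (ii) the observation that $a_+$ and translated conjugates of $a_-$ commute, which, combined with the Sacksteder-type linked-pair criterion (Proposition~\ref{p:linked_hyper}), forces one of $\rho(a_\pm)$ to be the identity on each such interval. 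The piecewise element $a_+$ is indispensable here; an embedding of $\BS(1,2)$ alone would not suffice.
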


The third one relies on the nature of \emph{stabilisers}, and here we require that the regularity of the group $G$ is $C^2$.
If there exists a point $x\in\R$ such that the (right, for instance) germs of elements  $g\in G$ fixing $x$ define a group which is dense in $\R$, 
then we can use the \emph{Szekeres vector field} to have a  well-defined local differentiable structure, by means of which we ensure that the hyperbolic 
nature of a fixed point cannot change after topological conjugacy to another $C^2$ action. This method applies to examples of groups in $\mP\mP_+(\R)$ 
that are naturally in $\Diff^1_+(\R)$, e.g.~the group generated by Thompson's group $F$ (which is $C^1$ in $\mP\mP_+(\R)$) together with $t\mapsto t+\frac12$, for which we establish that their actions are not $C^2$-smoothable.
It also applies to the \emph{Thompson-Stein groups} $F(n_1,\ldots, n_k)$ and $T(n_1,\ldots, n_k)$ (see Definition \ref{d:TS}), extending a previous work by Liousse \cite{Liousse}:
\begin{thm}\label{t:F23}
The Thompson-Stein groups $F(n_1,\ldots, n_k)$, $k\ge 2$, are not $C^2$-smoothable.
\end{thm}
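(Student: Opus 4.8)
The plan is to argue by contradiction, assuming that the standard piecewise linear action of $F=F(n_1,\ldots,n_k)$ on $[0,1]$ is conjugate, by some homeomorphism $\phi$, to a subgroup $\widetilde F=\phi F\phi^{-1}\subset \Diff^2_+([0,1])$, and then to run the third mechanism. The hypothesis $k\ge 2$ is exactly what makes the slope group $S=\langle n_1,\ldots,n_k\rangle\subset\R_{>0}$ have rank $\ge 2$, so that $\log S$ is dense in $\R$. First I would locate an interior point $x$ together with a subgroup $H\le F$ fixing $x$ whose right germs at $x$ are precisely the homotheties $t\mapsto x+\mu(t-x)$ with $\mu\in S$; since $S$ is dense, this germ group is dense, which is the standing hypothesis of the mechanism. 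For $k=1$ the same germ group is the cyclic group generated by a single homothety, hence discrete — consistent with Thompson's group $F=F(2)$ being $C^\infty$-smoothable by Ghys--Sergiescu, so any correct argument must genuinely break down in that case.

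The second step is to transport this structure through $\phi$ and invoke $C^2$ rigidity. Writing $\widetilde x=\phi(x)$, the germs at $\widetilde x^+$ of $\widetilde H=\phi H\phi^{-1}$ form a dense abelian group of $C^2$ germs fixing $\widetilde x$. By Szekeres theory and the description of centralizers of $C^2$ germs used in \cite{hyperbolic,navas-book}, commuting $C^2$ germs share a single Szekeres vector field; hence there is a unique $C^1$ vector field $X$ near $\widetilde x$, vanishing at $\widetilde x$, such that every element of $\widetilde H$ is a time-$\tau$ map of the flow of $X$, with $\widetilde h\mapsto\tau(\widetilde h)$ an injective homomorphism into $(\R,+)$. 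This furnishes a canonical $C^1$ local model in which the entire germ group is flowed by $X$, and it makes the dichotomy $X'(\widetilde x)\ne 0$ (hyperbolic) versus $X'(\widetilde x)=0$ (parabolic) an invariant of the germ group, independent of $\phi$. In particular, whether a given $\widetilde h\in\widetilde H$ satisfies $\widetilde h'(\widetilde x)\ne 1$ is now dictated by the flow rather than by an accident of the conjugacy: this is the precise content of the slogan that ``the hyperbolic nature of a fixed point cannot change''.

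The third step is to exhibit a piecewise linear element whose geometry is incompatible with this flow. I would select, inside $H$, an element $g$ acting as a genuine contraction of an interval $(x,c)$ towards $x$, so that $\Fix(g)\cap(x,c)=\emptyset$ and $g^n(c)\to x$, together with an element $f\in F$ fixing $x$ and realizing a nontrivial fixed-point configuration strictly inside $(x,c)$. Since topological conjugacy preserves fixed-point sets and the cyclic order, the images $\widetilde f,\widetilde g$ reproduce this configuration in $\Diff^2_+$, where it is forbidden by Kopell's Lemma: a $C^2$ contraction $\widetilde g$ of $(\widetilde x,\widetilde c)$ cannot coexist with a nontrivial $C^2$ map having an interior fixed point inside its support. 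Concretely, one monitors the conjugates $g^nfg^{-n}$, which accumulate at $\widetilde x$ with a prescribed germ dictated by the flow of $X$; the Szekeres structure together with the finiteness of the $2$-jet then forces the offending element to be trivial, contradicting faithfulness of the standard action.

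The hardest point is the construction in the third step. One must produce honest elements of $F(n_1,\ldots,n_k)$ — with finitely many breakpoints and slopes in $S$ — realizing a configuration that the $C^2$ flow cannot accommodate, and check that this really requires two multiplicatively independent multipliers, so that there is no analogue for $k=1$. The delicate feature is that commuting finite-breakpoint maps tend to share the same support, so the contradiction should be extracted not from a single commuting pair but from the interaction between a contracting germ in the dense group $\widetilde H$ and the flow $X$ of its Szekeres field. Turning this interaction into a definite contradiction, rather than a merely consistent irrational ratio of translation numbers, is where the argument needs the full strength of the $C^2$ centralizer theory of \cite{hyperbolic} and the sharp form of Kopell's Lemma, thereby extending Liousse's analysis \cite{Liousse}.
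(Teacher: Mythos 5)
Your first two steps track the paper's strategy: for $k\ge 2$ one produces two commuting contractions fixing a point whose germs generate a rank-two, hence dense, subgroup of multipliers, and the shared Szekeres vector field survives $C^2$ conjugation. But your third step has a genuine gap, and it is precisely the step that must produce the contradiction. The paper does not use Kopell's Lemma, and the conclusion it extracts from the Szekeres analysis is much stronger than the invariance of the hyperbolic/parabolic dichotomy of the flow: because the conjugated germs sit \emph{densely} in the flow of the conjugated Szekeres field $\mathcal Y$, the conjugating homeomorphism $\varphi$ satisfies $\varphi'(x)=\mathcal Y(\varphi(x))/\mathcal X(x)$ on $(0,a]$, and is therefore itself $C^2$ away from the fixed point (Proposition~\ref{p:nonC2}). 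The contradiction is then immediate and purely local: any $h\in F(n_1,\ldots,n_k)$ with a breakpoint $t\in(0,a)$ satisfying $h(t)\in(0,a)$ stays non-$C^2$ at $\varphi(t)$ after conjugation by a map that is $C^2$ near both $t$ and $h(t)$ (Theorem~\ref{t:nonC2}). No further dynamical input is needed; the ``incompatible geometry'' is nothing more than a breakpoint in the region where $\varphi$ has been forced to be smooth.

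By contrast, the Kopell configuration you propose cannot be realized inside $F(n_1,\ldots,n_k)$: if $g$ contracts $(x,c)$ toward $x$ with no interior fixed point and $f$ commutes with $g$ and fixes some $p\in(x,c)$, then $f$ fixes the orbit $g^n(p)\to x$; a map with finitely many breakpoints fixing a sequence accumulating at $x$ is the identity near $x$, and conjugating by $g^{-1}$ propagates the identity over all of $(x,c)$. So the obstruction Kopell's Lemma detects is already absent at the piecewise linear level, exactly as you suspected when noting that commuting finite-breakpoint maps share supports. Your fallback (``monitor $g^nfg^{-n}$\,\dots\ the finiteness of the $2$-jet forces the offending element to be trivial'') is not an argument and, as you acknowledge, you have not closed it. The missing idea is to aim the Szekeres rigidity at the \emph{regularity of the conjugacy itself} and then let the breakpoints of a third element do the work, rather than to hunt for a forbidden commuting configuration.
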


\begin{cor}\label{t:T23}~
\begin{enumerate}[(1)]
\item Assume $n_1=2$.
The Thompson-Stein groups $T(2,n_2,\ldots,n_k)$, $k\ge 2$, have no faithful $C^2$ action on $\T$.
\item Assume $n_1=2,n_2=3$. Every $C^2$ action of $T(2,3,n_3,\ldots,n_k)$ on $\T$ is trivial. This holds in particular for $T(2,3)$.
\end{enumerate}

\end{cor}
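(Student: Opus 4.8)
The plan is to deduce both statements from Theorem~\ref{t:F23}, using the containment of $F(n_1,\ldots,n_k)$ in $T(n_1,\ldots,n_k)$ as the stabiliser of a point of the circle for the standard action, together with the dynamics of groups acting on $\T$ that contain no free subgroup. For this I will use the stronger form of the obstruction underlying Theorem~\ref{t:F23}, namely that $F(n_1,\ldots,n_k)$ with $k\ge 2$ admits no faithful $C^2$ action on the interval at all: the third mechanism of Section~\ref{s:Mech} manufactures a rigidly hyperbolic fixed point out of the density of the germs of the point-stabiliser together with an element exhibiting exponential behaviour, and these are features of \emph{any} faithful action, not merely of the standard one. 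Granting this, part~(2) will follow from part~(1) by a soft argument: in the range $n_1=2,\,n_2=3$ the group $T(2,3,n_3,\ldots,n_k)$ is simple, so any nontrivial $C^2$ action on $\T$ is automatically faithful, and part~(1) then forces every $C^2$ action to be trivial. Thus the whole weight falls on part~(1).

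For part~(1), suppose $\rho\colon T(2,n_2,\ldots,n_k)\to \Diff_+^2(\T)$ is faithful and restrict it to the copy of $F=F(2,n_2,\ldots,n_k)$. Being a group of piecewise affine homeomorphisms of the interval, $F$ contains no nonabelian free subgroup (Brin--Squier), so by the Ghys--Margulis alternative the group $\rho(F)$ preserves a probability measure $\mu$ on $\T$. I split according to $\mu$. If $\mu$ has an atom, the finite set of atoms of maximal mass is a finite orbit; since the commutator subgroup $[F,F]$ is infinite and simple it has no proper finite-index subgroup, hence no nontrivial finite quotient, so $\rho([F,F])$ fixes each point of this finite orbit. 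If $\mu$ is atomless, its distribution function semiconjugates $\rho$ to the rotations $R_{\rho_\mu(g)}$, where $g\mapsto \rho_\mu(g)$ is a homomorphism $F\to\R/\Z$; this homomorphism kills the perfect group $[F,F]$, so on $\mathrm{supp}\,\mu$ the semiconjugacy collapses $\rho([F,F])$ to the identity. In either case $\rho([F,F])$ has a global fixed point $x\in\T$.

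Cutting the circle at $x$ then turns the restriction of $\rho$ into a faithful $C^2$ action of $[F,F]$ on the interval $\T\setminus\{x\}\cong[0,1]$: the diffeomorphisms fix $x$ and are $C^2$ there, hence extend to $\Diff^2_+([0,1])$. Since $[F,F]$ contains copies of $F(2,n_2,\ldots,n_k)$ supported in a proper subinterval, restricting once more produces a faithful $C^2$ action of $F(2,n_2,\ldots,n_k)$ on the interval, contradicting the strong form of Theorem~\ref{t:F23}; this proves part~(1), and with it part~(2). The main obstacle is precisely the extraction of the global fixed point for $\rho([F,F])$: one must exclude a minimal (in particular proximal) action on $\T$, and this is exactly where the absence of free subgroups in piecewise affine groups and the perfectness of the commutator subgroup are indispensable. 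The remaining ingredients---the embedding of $F$ as a point stabiliser, simplicity of $[F,F]$, and simplicity of $T(2,3,\ldots,n_k)$, which dictate the precise arithmetic thresholds $n_1=2$ and $n_1=2,\,n_2=3$---are standard structural facts about Thompson--Stein groups, refining those employed by Liousse~\cite{Liousse}.
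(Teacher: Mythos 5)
Your reduction of part~(2) to part~(1) via simplicity is the same as the paper's, but part~(1) rests on a claim the paper does not prove and that does not follow from its methods: you assert a ``strong form'' of Theorem~\ref{t:F23}, namely that $F(n_1,\ldots,n_k)$, $k\ge 2$, admits \emph{no faithful $C^2$ action on the interval whatsoever}. Theorem~\ref{t:F23} only says the standard action is not $C^2$-\emph{smoothable}, i.e.\ not topologically conjugate to a $C^2$ action, and the paper is explicit (Remark~\ref{r:abstract}) that this is strictly weaker. The third mechanism cannot be upgraded in the way you suggest: Theorem~\ref{t:nonC2} derives its contradiction from an element $h$ having a $C^2$ discontinuity point inside the region rigidified by the Szekeres flow, and the Szekeres argument only shows that a \emph{conjugating homeomorphism} must be $C^2$ there, so that the breakpoint of $h$ persists. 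In an arbitrary faithful $C^2$ action of the abstract group there is no conjugacy to the standard action, every element is a $C^2$ diffeomorphism by hypothesis, and nothing forces the images of $f,g$ to be commuting contractions with dense germ group at a common fixed point. So after you extract a global fixed point for $\rho([F,F])$ (that part of your argument, via Margulis' alternative and perfectness/simplicity, is fine) and cut the circle, you are left with a faithful $C^2$ action of a Thompson--Stein $F$-group on the interval --- and no contradiction, because such actions are not excluded by anything in the paper.

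The missing ingredient is Liousse's rigidity theorem \cite[Theorem 3.A]{Liousse}: every \emph{faithful} $C^2$ action of $T(2,n_2,\ldots,n_k)$ on $\T$ is topologically conjugate to the standard piecewise linear action. This is what converts ``the standard action is not smoothable'' into ``there is no faithful $C^2$ action'': restricting the conjugacy to the subgroup $F(2,n_2,\ldots,n_k)$ exhibits a $C^2$ smoothing of its standard action, contradicting Theorem~\ref{t:F23}. That rigidity statement is special to $T$ acting on the circle (minimality of the standard action, rotation-number arguments) and has no analogue for $F$ on the interval, which is precisely why the corollary is stated for the $T$-groups and why the hypothesis $n_1=2$ enters through Liousse's theorem rather than through structural facts about stabilisers.
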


\section{Historical motivations}
\label{s:Motiv}
\subsection{Thompson's groups $F$ and $T$} In the 50s, Richard J.~Thompson introduced three groups $F$, $T$ and $V$, 
which have many nice properties (cf.~\cite{CFP}). These groups are \emph{finitely presented} and $[F,F]$, $T$, $V$ are \emph{simple}. 
They have been among the first known examples sharing these properties. 
Since only $F$ and $T$ act by \emph{homeomorphisms} on the circle, we restrict our attention to them.

\begin{dfn}
\emph{Thompson's group $T$} is the group of all piecewise linear homeomorphisms of the circle $\T\cong \R/\Z$ 
such that all derivatives are powers of $2$ and the breakpoints are dyadic rationals, i.e.~points of the form $p/2^q$, $p,q\in\N$. 
\emph{Thompson's group $F$} is the stabiliser of the point $0$ in $T$.
\end{dfn}

It has been proved by Ghys and Sergiescu in \cite{GS} that the piecewise linear action of $T$ (and hence of $F$) on $\T$ is \emph{$C^\infty$-smoothable}. 
On the other side, it is ``not difficult'' to find $C^\infty$ faithful actions (\textit{a priori} not topologically conjugate to the standard one) of Thompson's group. 

We recall \emph{Thurston's interpretation} of $T$ as a group of piecewise projective homeomorphisms of $\R\P$ (cf.~\cite{CFP}).
\begin{dfn}\label{ThurstonT}
$T$ is the group of piecewise $\PSL(2,\Z)$ homeomorphisms of $\R\P$ with breakpoints in $\cP_\Z$ (which is the set of rational numbers together with the point at infinity). 
$T$ is generated by $\PSL(2,\Z)$ and an additional element $c$ defined as
\[
c(t)=\begin{cases}
t&\text{if }t\in [\infty,0],\\
\frac{t}{1-t}&\text{if }0\leq t\leq\frac{1}{2},\\
3-\frac{1}{t}&\text{if }\frac{1}{2}\leq t\leq 1,\\
t+1&\text{if }t\in [1,\infty].
\end{cases}
\]
\end{dfn}
It is particularly striking that the element \emph{$c$ has continuous first derivative}. As the action of $\PSL(2,\Z)$ is even real-analytic, 
Thurston's interpretation gives a natural $C^1$-smoothing of $T$.\footnote{Another way of seeing this is that $C^1$ continuity follows from the choice of $\mathcal P_{\Z}$ for the set of breakpoints.}
In this model, $F$ is the group of piecewise $\PSL(2,\Z)$ homeomorphisms of $\R\P$ with breakpoints in $\cP_\Z$,
that also fix infinity.
So $F$ is the stabiliser of $\infty$ in $T$.
$F$ is generated by $t\mapsto t+1$ together with $c$ from above.
Recall that the group $\PSL(2,\Z)$ is isomorphic to the free product $\Z_2*\Z_3$, freely generated by the order two element $a: t\mapsto -\frac{1}{t}$ 
and the order three element $b: t\mapsto \frac{1}{1-t}$. 

\smallskip

Now we sketch a proof that $F$ admits a $C^{\infty}$ action inspired by \cite{chains} (see also \cite{fast}).
Note that this is weaker than proving it is $C^{\infty}$-smoothable,
which is a consequence of the theorem of Ghys-Sergiescu.

Given any homeomorphism $h:[0,1]\to [0,2]$, if we define the element
\[
\widetilde c(t)=\begin{cases}
t&\text{if }t\in [\infty,0],\\
h(t)&\text{if }t\in [0,1],\\
t+1&\text{if }t\in [1,\infty]
\end{cases}
\]
then the group generated by $t\to t+1$ and $\widetilde c$ is isomorphic to $F$. If we choose $h$ to be $C^\infty$, infinitely tangent to the identity at $0$ and to $t\mapsto t+1$ at $1$, then the modified element $\widetilde c$ is $C^\infty$. The algebraic properties of $F$ guarantee that the group generated by $t\to t+1$ and $\widetilde c$ is isomorphic to $F$.\footnote{To see this, first check that the relations of $F$ are satisfied and conclude using the property $F$ satisfies that every proper quotient is abelian.}
However, it is not guaranteed that one can choose $h$ and hence $\widetilde c$ such that the action of the group $\langle t\to t+1,\widetilde c\rangle$ is actually \emph{conjugate} to the standard action of $F$. 

A very important remark is that this strategy is morally possible because $0$ and $1$ \emph{are not hyperbolic fixed points} (they are parabolic). 
This allows to slow-down the dynamics near these points and make $c$ infinitely tangent to the identity. 
This feature already appeared in the work of Ghys and Sergiescu.
Hyperbolicity is a typical obstruction for such modifications in differentiable dynamics.

\subsection{One open problem: The Day-von Neumann problem for $\Diff^{2}_+(\R)$}
One of the main motivations for our work is understanding 
\emph{amenable groups} of diffeomorphisms of the circle. 
There are several equivalent definitions of amenability and an extensive literature on the topic (see~\cite{tullio} for an elementary introduction). 
We provide one definition:

\begin{dfn}
A discrete group $G$ is amenable if it admits a finitely additive, left translation invariant probability measure.
\end{dfn}

Here is an equivalent definition, \emph{\`a la} Krylov-Bogolyubov, which is more natural from the viewpoint of dynamical systems:
\begin{dfn}
A discrete group $G$ is amenable if every continuous action on a compact space has an invariant probability measure.
\end{dfn}

The class of amenable groups includes finite, abelian and solvable groups. 
Amenability is closed under extensions, products, direct unions and quotients.
Subgroups of amenable groups are amenable.
On the other hand, groups containing nonabelian free subgroups are nonamenable. 
The so-called \emph{Day-von Neumann problem} (popularized by Day in the 50s) is about the converse statement: \emph{does every nonamenable group contain a nonabelian free subgroup?}
If one restricts the question to \emph{linear groups}, then the well-known Tits alternative gives a positive answer: any non virtually solvable linear group contains nonabelian free subgroups.

The problem has been solved with negative answers and currently various negative solutions are known.
These include Tarski monsters, Burnside groups,
and Golod-Shafarevich groups. 
In this article we are interested in a particular class of such groups, discovered by Monod \cite{monod} and Lodha-Moore \cite{LodhaMoore}, which are subgroups of $\mP\mP_+(\R)$. 
Among them, there are examples that are in $\Diff_+^1(\R)$.
For instance, the group generated by $t\to t+\frac{1}{2}$ together with the element $c$ from Definition~\ref{ThurstonT} above provides such an example.

Interestingly, no negative solution to the Day-von Neumann problem is known among subgroups of $\Diff^2_+(\R)$.
Motivated by this question, in this work we prove (Theorems \ref{t:Firr} and \ref{t:Frat}) that the natural actions of these groups are not $C^2$-smoothable.
However, we have to stress that \textit{a priori} there could be smooth actions of such nonamenable groups that are not topologically conjugate to the standard actions (\textit{cf.}~Remark~\ref{r:abstract}).

\smallskip

The moral consequence of our results is that the Day-von Neumann problem in $\Diff_+^2(\R)$ is strictly harder than in 
$\Diff^1_+(\R)$. 
This is not so surprising, since there are important differences between $C^2$ and $C^1$ diffeomorphisms in one-dimensional dynamics.
We end this section by recalling a couple of tantalising longstanding open questions in this direction.

\begin{ques}
Is $F$ amenable?
\end{ques}

\begin{ques}
Does the Tits alternative hold for the group of real-analytic diffeomorphisms of the real line?
\end{ques}

\subsection{A second open problem: Higher rank behaviour}

\begin{dfn}\label{d:TS}
Let $1<n_1<\cdots<n_k$ be natural numbers such that the group $\Lambda=\langle n_i\rangle\subset \R_+^*$ is an abelian group of rank $k$. Denote by $A$ the ring $\Z[\frac1m]$, where $m$ is the least common multiple of the $n_i$'s.

\emph{Thompson-Stein's group} $T(n_1,\ldots,n_k)$ is the group of all piecewise linear homeomoprhisms of the circle $\T\cong\R/\Z$ such that all derivatives are in $\Lambda$ and the breakpoints are in $A$. \emph{Thompson-Stein's group} $F(n_1,\ldots,n_k)$ is the stabiliser of the point $0$ in $T(n_1,\ldots,n_k)$.
\end{dfn}

With the above definition, the group $T(2)$ is the classical Thompson's group $T$. It has been proved by Stein~\cite{stein} that these groups share many group-theoretical properties with the classical Thompson's groups, such as being finitely presentable (\textit{cf.}~\cite{bieri-strebel}).

However there are important differences from the dynamical viewpoint. In \cite{minakawa1,minakawa2}, Minakawa discovers that $\mathsf{PL}_+(\T)$ contains  ``exotic circles'', namely topological conjugates of $\mathsf{SO}(2)$ that are not one-parameter groups \emph{inside $\mathsf{PL}_+(\T)$}, in the sense that they are not $\mathsf{PL}$ conjugates of $\mathsf{SO}(2)$. 
In particular, Liousse shows in \cite{Liousse} that $T(n_1,\ldots,n_{k})$ contains an abelian group of rank $k-1$ that is contained in a topological conjugate of $\mathsf{SO}(2)$, but not in a $\mathsf{PL}$ conjugate of $\mathsf{SO}(2)$.
Whence Navas suggested the following:
\begin{ques}
Does $T(n_1,\ldots,n_k)$, $k\ge 2$, satisfy Kazhdan's property $(T)$?\footnote{We do not define property $(T)$ here, we refer the reader to \cite{kazhdan}.}
\end{ques}
On the other hand Navas proved in \cite{navas(T)} that the only groups of $C^{r}$ diffeomorphisms, $r> 3/2$, that have property $(T)$ are finite. Focusing our attention on one particular example, in \cite{Liousse}, Liousse proves, among other things, that every action of $T(2,3)$ on $\T$ by $C^9$ diffeomorphisms is trivial and with Corollary \ref{t:T23} we improve this result to $C^2$ regularity. 
It would be very interesting to prove that $T(2,3)$ has no $C^1$ action on the circle, as this would confirm that this group is a good candidate 
for finding an infinite Kazhdan group of circle homeomorphisms.

Naturally, there could be also good candidates among groups of piecewise projective homeomorphisms.
\section{Nonamenable groups of piecewise projective homeomorphisms}
\label{s:Monod}
\subsection{Monod's groups}

Generalizing a well-known result by Brin and Squier \cite{brin-squier}, 
Monod showed in \cite{monod} that $\mP\mP_+(\R)$ does not contain nonabelian free subgroups. One key feature is that given any $r\in \mathbb{R}$, the group of germs of elements in $\mP\mP_+(\R)$ fixing the point $r$
is isomorphic to the affine group.

\begin{dfn}[Monod's groups] \label{d:Monod} Let $A$ be a subring of $\R$. 
$G(A)$ is defined as the group of all piecewise $\PSL(2,A)$ homeomorphisms of the circle 
with breakpoints in $\cH_A$. The group $H(A)$ is the stabiliser of $\infty$ inside $G(A)$.
\end{dfn}

Observe that the groups $G(\R)$ and $H(\R)$ coincide with $\mP\mP_+(\R\P)$ and $\mP\mP_+(\R)$ respectively.
Relying on the fact that for any $A\neq \Z$, the group $\PSL(2,A)$ contains dense free subgroups, 
Monod proved in \cite{monod} that for any $A\neq \Z$, the group $H(A)$ is nonamenable. 
Therefore these groups give \emph{negative answer} to the Day-von Neumann problem.

\begin{rem}
The previous definition can be generalized, considering any subgroup $\Gamma\subset \PSL(2,\R)$. 
Elements in $G(\Gamma)$ are piecewise $\Gamma$ and the breakpoints are in $\cH_\Gamma$. 
For any non-discrete $\Gamma\subset\PSL(2,\R)$, the group $H(\Gamma)$ does not contain free subgroups and is nonamenable. Theorem~\ref{t:H(A)} can be extended to these groups as well.
\end{rem}

We shall now demonstrate  Theorem~\ref{t:H(A)}, namely that Monod's examples are not $C^1$-smoothable. For part \eqref{i:Monod1}, it is enough to prove the following: 
\begin{thm}\label{t:H(Z)}
Monod's group $H(\Z)$ is not $C^1$-smoothable.
\end{thm}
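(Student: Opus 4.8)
The plan is to show that the piecewise-$\PSL(2,\Z)$ group $H(\Z)$ contains a \emph{linked pair of fixed points}, in the sense defined in Section~\ref{s:Mech}, and then invoke the rigidity mechanism based on Sacksteder's Theorem to produce rigid hyperbolic fixed points under any topological conjugacy into $\Diff^1_+(\R)$. The first step is to locate two elements $f,g\in H(\Z)$ together with pairs of successive fixed points $(a,b)$ of $f$ and $(c,d)$ of $g$ that are linked, i.e.~so that one endpoint of one interval lies strictly inside the other interval while sharing no more than a single common point. Since $H(\Z)$ is the stabiliser of $\infty$, its elements are piecewise-$\PSL(2,\Z)$ homeomorphisms of $\R$ with breakpoints in $\cP_\Z=\Q\cup\{\infty\}$, and one has great freedom to build such elements: for instance, one may take elements that coincide with a hyperbolic M\"obius transformation on part of the line and with the identity elsewhere, arranging the fixed-point intervals to overlap in the required linked configuration. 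I would exhibit these two elements explicitly, verifying that they are genuinely in $H(\Z)$ (their breakpoints are rational, the pieces lie in $\PSL(2,\Z)$, and they fix $\infty$), and check condition~(2) of the definition of a linked pair.

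\textbf{Once the linked pair is in hand}, suppose for contradiction that $H(\Z)$ is $C^1$-smoothable, so there is a homeomorphism $\phi$ conjugating the standard action into $\Diff^1_+(\R)$. Conjugation by $\phi$ preserves the combinatorial structure of fixed points: $\phi$ sends successive fixed points of $f$ and $g$ to successive fixed points of $\phi f\phi^{-1}$ and $\phi g\phi^{-1}$, and preserves the linking relation since it is purely topological. Thus the conjugated group $\widetilde G=\phi H(\Z)\phi^{-1}\subset\Diff^1_+(\R)$ still possesses a linked pair of fixed points. The next step is to apply the $C^1$-pseudogroup version of Sacksteder's Theorem to the subgroup $\langle \phi f\phi^{-1},\phi g\phi^{-1}\rangle$ acting on the relevant interval: because the fixed-point intervals are linked, the dynamics of this pseudogroup is locally contracting somewhere, and Sacksteder guarantees that \emph{some} element of the semigroup generated by these two diffeomorphisms has a hyperbolic fixed point, i.e.~a fixed point with derivative different from~$1$.

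\textbf{The concluding step} is to derive a contradiction from the existence of this hyperbolic fixed point. The idea is that $H(\Z)$, as an abstract group, cannot accommodate a $C^1$ diffeomorphism with a hyperbolic fixed point: one uses the algebraic structure of $H(\Z)$ (in particular the affine structure of germs at a point, which Monod's work exploits) together with the classical fact that near a hyperbolic fixed point a $C^1$ diffeomorphism is topologically conjugate to its linear part, to force relations that are incompatible with being realised inside $H(\Z)$. Concretely, I would look for a Baumslag--Solitar-type or abelian-by-cyclic obstruction: a hyperbolic fixed point creates an element acting as a genuine contraction, and combining it with the rich parabolic (unipotent) germs available in $H(\Z)$ yields commutation or conjugacy relations that cannot all be satisfied by $C^1$ germs at a hyperbolic fixed point. \emph{The main obstacle} I anticipate is precisely this last step: translating the mere existence of \emph{some} hyperbolic fixed point of \emph{some} semigroup element into a sharp algebraic contradiction valid for the whole group. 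Sacksteder's argument is nonconstructive about \emph{which} element and \emph{where}, so the delicate point is to extract a usable, group-theoretically meaningful consequence — this is where I expect the argument to require the finer germ analysis and the input from the Bonatti--Navas--Rivas--Monteverde work \cite{hyperbolic} on abelian-by-cyclic actions.
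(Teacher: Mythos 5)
Your first two steps are exactly the paper's: exhibit a linked pair of successive fixed points in $H(\Z)$ (the paper takes a hyperbolic $\gamma\in\PSL(2,\Z)$ with fixed points $a<-\tfrac32<\tfrac12<b$, sets $f=\gamma$ on $[a,b]$ and the identity elsewhere, and $g(t)=f(t-1)+1$), note that linking is preserved by the conjugacy $\phi$, and apply Proposition~\ref{p:linked_hyper} to get an element of $\phi H(\Z)\phi^{-1}$ with a hyperbolic fixed point $x$. The gap is in your concluding step. The paper does \emph{not} derive an abstract group-theoretic contradiction from the hyperbolic fixed point; it uses the \emph{location} of that point. Pulling back, $y=\phi^{-1}(x)$ is a fixed point of the element $g_1=\phi^{-1}g\phi\in H(\Z)$, and since $x$ is a topological attractor or repellor, either $y$ is already a breakpoint of $g_1$ (hence in $\cH_\Z$ by definition of $H(\Z)$) or the M\"obius piece of $g_1$ near $y$ must be hyperbolic — so in either case $y\in\cH_\Z$. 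But $\cH_\Z$ is exactly the set of allowed breakpoints, so one may form $g_2\in H(\Z)$ equal to the identity on $(-\infty,y)$ and to $g_1$ on $[y,\infty)$; its conjugate $\phi g_2\phi^{-1}$ then has right derivative $\ne 1$ and left derivative $1$ at $x$, contradicting $C^1$. This ``cut the element at the rigid hyperbolic point'' step is the whole point of the definition of Monod's groups via breakpoints in $\cH_A$, and it is what your outline is missing.

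Your proposed substitute — a Baumslag--Solitar or abelian-by-cyclic obstruction extracted from germs at the hyperbolic point — would not work here. That mechanism is the one used for the groups $G_\lambda$ and for part~(2) of Theorem~\ref{t:H(A)}, and it requires $\PSL(2,A)$ to contain a nontrivial homothety; $\PSL(2,\Z)$ contains none (its diagonal subgroup is trivial), and the stabiliser in $\PSL(2,\Z)$ of a point of $\cH_\Z$ is just an infinite cyclic group of hyperbolic elements, which imposes no $C^1$-incompatible relation. Moreover, an argument of the kind you describe would be aiming at the stronger statement that $H(\Z)$ has no faithful $C^1$ action at all, which is not what the theorem asserts (cf.~Remark~\ref{r:abstract}) and is not what the paper proves: non-smoothability is a statement about the standard action up to topological conjugacy, and the contradiction must therefore exploit the conjugacy $\phi$ itself, as the breakpoint argument does.
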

On the other hand, part \eqref{i:Monod2} relies on Theorem~\ref{t:main1Hyp}.

\begin{proof}[Proof of Theorem~\ref{t:H(A)}]
Let us first prove \eqref{i:Monod1}. Any subring $A\subset \R$ contains $\Z$, therefore $\PSL(2,\Z)$ is a subgroup of any $\PSL(2,A)$. Therefore we have inclusions $H(\Z)\subset H(A)\subset G(A)$. As $H(\Z)$ is not $C^1$-smoothable (Theorem \ref{t:H(Z)}), neither are $H(A)$ and $G(A)$.

\smallskip

Next, we demonstrate part \eqref{i:Monod2}. Let $\lambda>1$ be a \emr{Galois hyperbolic} number such that 
\[
f_\lambda:x\mapsto \lambda x
\]
belongs to $\PSL(2,A)$. As $\Z\subset A$, the translation $g:x\mapsto x+1$ belongs to $\PSL(2,A)$ as well.
Moreover, $f_\lambda$ being a hyperbolic element in $\PSL(2,A)$, we have that $\cH_A$ contains its fixed point $0$.
Therefore Monod's group $H(A)$ contains the piecewise defined element
\[
h_\lambda:x\mapsto\begin{cases}
x&\text{if }x\le 0,
\\
\lambda x&\text{if }x>0.
\end{cases}
\]
We have just shown that $G_\lambda=\langle f_\lambda,g,h_\lambda\rangle$ is a subgroup of $H(A)$.

Let $\rho:H(A)\to \Diff_+^1([0,1])$ be a representation. Theorem~\ref{t:main1Hyp} implies that 
$[g,h_\lambda gh_\lambda^{-1}]$ is in the kernel of $\rho$. Therefore $\rho$ cannot be injective, as desired.
\end{proof}


The dynamical ingredient we need for Theorem~\ref{t:H(Z)} is the following Sacksteder-like result, originally due to Deroin, Kleptsyn, Navas \cite{DKNacta} (see \cite[Prop.~3.2.10]{navas-book} and also \cite[\S~4.5]{centralizers} for a simplified proof).

\begin{prop}\label{p:linked_hyper}
Let $G=\langle f,g \rangle$ be a group acting by orientation-preserving $C^1$ diffeomorphisms on a compact one-dimensional manifold.
If $\{a,b\}$ and $\{c,d\}$ are linked pairs of successive fixed points for $f,g$, then $G$ contains an element with a hyperbolic fixed point in $(a,b)\cap(c,d)$.
\end{prop}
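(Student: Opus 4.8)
The plan is to first localize the dynamics to the overlap of the two linked intervals by exhibiting a single element of $G$ that traps a closed subinterval, and then to promote the resulting \emph{topological} contraction to a genuinely \emph{hyperbolic} fixed point by a probabilistic (Sacksteder-type) argument, which is where the $C^1$ hypothesis really enters.

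First I would normalize the configuration. The linking condition leaves, up to exchanging the roles of $f$ and $g$ and reflecting the manifold, a single essential case, namely four points $a<c<b<d$ with overlap $(a,b)\cap(c,d)=(c,b)$. Since replacing $f$ by $f^{-1}$ or $g$ by $g^{-1}$ changes neither $\Fix(f)$ nor $\Fix(g)$ nor the pairs of successive fixed points, I may further assume $f(x)>x$ on $(a,b)$ and $g(x)>x$ on $(c,d)$. Set $J=[c,b]$ and $h=g^{-1}f\in G$. Tracking the endpoints using $f(b)=b$, $g(c)=c$ and the absence of fixed points inside the two intervals, one checks
\[
h(c)=g^{-1}(f(c))\in(c,b),\qquad h(b)=g^{-1}(b)\in(c,b),
\]
so the increasing homeomorphism $h$ maps $J$ strictly into its interior, $h(J)\Subset(c,b)$. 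In particular $h(x)-x$ changes sign on $J$, so $h$ has a fixed point $p\in(c,b)=(a,b)\cap(c,d)$, and the nested intervals $h^n(J)$ shrink onto a compact, $h$-invariant subset $K\subset(c,b)$ on which the dynamics of $G$ is recurrent.

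This topological contraction is not yet enough: a $C^1$ (even $C^\infty$) map can trap an interval while fixing its attractor parabolically, as the example $x\mapsto x-x^3$ shows. To extract a point where some element of $G$ has derivative different from $1$, I would run the probabilistic form of Sacksteder's theorem for $C^1$ pseudogroups on the trapped region. Concretely, put a symmetric probability measure on $\{f^{\pm1},g^{\pm1}\}$, consider the associated random walk on the invariant compact set $K$, and choose a stationary measure $\nu$. The quantity to control is the log-derivative cocycle $x\mapsto\log w'(x)$ along trajectories: using that the ambient manifold has finite total length, the images of any fixed interval under distinct group elements cannot all have length bounded below, which (via the mean value theorem) forces the expected log-derivative, i.e.\ the Lyapunov exponent of $\nu$, to be nonpositive, and strictly negative on a set of positive measure once one feeds in the genuine contraction $h$. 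A Birkhoff/Borel--Cantelli argument then produces a point $x$ and a word $w\in G$ with $w(x)=x$ and $w'(x)<1$, that is, a hyperbolic fixed point; conjugating by a suitable power of the trap $h$, which pushes everything into $(c,b)$, I relocate this hyperbolic fixed point inside the prescribed overlap.

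The main obstacle is exactly the passage from topological to hyperbolic contraction in regularity $C^1$ only. In $C^{1+\alpha}$ one would finish immediately through the classical Schwartz--Denjoy distortion estimate, comparing the derivative at the trapped fixed point with the derivative at the point supplied by the mean value theorem; in bare $C^1$ this distortion control is unavailable, and the probabilistic averaging over the whole semigroup is precisely the device that circumvents it. A secondary, more bookkeeping-level point is to guarantee that the hyperbolic fixed point produced by the probabilistic argument lies in the overlap $(a,b)\cap(c,d)$ rather than merely somewhere in the manifold; the trap $h$ from the first step is what pins it down there.
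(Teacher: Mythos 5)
The first thing to say is that the paper does not prove Proposition~\ref{p:linked_hyper} at all: it imports it as a known result of Deroin--Kleptsyn--Navas, pointing to \cite[Prop.~3.2.10]{navas-book} and to the simplified proof in \cite[\S~4.5]{centralizers}. So you are attempting to reprove the black box, and your overall strategy --- localize to the overlap by a trapped interval, then upgrade topological to hyperbolic contraction via the $C^1$ Sacksteder/random-walk argument --- is indeed the strategy of those references. Your first step is also correct: with the normalization $a<c<b<d$, $f>\mathrm{id}$ on $(a,b)$ and $g>\mathrm{id}$ on $(c,d)$, the element $h=g^{-1}f$ does map $J=[c,b]$ strictly into $(c,b)$.

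The second step, however, has a genuine gap. Strict negativity of the Lyapunov exponent does not follow from the existence of one topological contraction: as your own example $x\mapsto x-x^3$ shows, $h$ may be parabolic at its trapped fixed point, and the stationary measure concentrated there has exponent exactly $0$. What forces a negative exponent in the cited proofs is a \emph{ping-pong pair}: two elements sending a common subinterval $I$ of the overlap into itself with \emph{disjoint} images, so that the $2^n$ images of $I$ under length-$n$ positive words are pairwise disjoint and Jensen's inequality gives $\mathbb{E}\log|w_n(I)|\le \log|I|-n\log 2$. Your sketch never produces such a pair --- the linked configuration does yield one, e.g.\ $h_1=g^{-n}f^{n}$ and $h_2=g^{-2n}f^{n}$ for $n$ large, whose images of $[c,b]$ are disjoint subintervals of $(c,b)$ because $g^{-n}(b)\to c$ while $f^{n}(c)\to b$ --- and without it the remark that ``the images of any fixed interval under distinct group elements cannot all have length bounded below'' gives nothing, since those images need not be disjoint and a symmetric random walk on $\{f^{\pm1},g^{\pm1}\}$ over the whole manifold can perfectly well have zero exponent almost everywhere. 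Two further mismatches: the set $K=\bigcap_n h^n(J)$ is invariant under $h$ but not under $f$ and $g$, so ``the random walk on the invariant compact set $K$'' is not well defined; and conjugating by $h^n$ relocates a hyperbolic fixed point into $(c,b)$ only if that point already lies in $[c,b]$, so the localization must be built into the contracting semigroup from the start (as with $h_1,h_2$ above, all of whose positive words fix a point of $(c,b)$) rather than patched on afterwards. Finally, even granting the negative exponent, the passage from exponential contraction along random words to an actual fixed point with derivative $<1$ is the genuinely delicate $C^1$ step of \cite{DKNacta}, and it remains asserted rather than argued in your sketch.
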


\begin{proof}[Proof of Theorem~\ref{t:H(Z)}]
Let us assume by way of contradiction that there is a homeomorphism $\phi:\R\to \R$ such that $G:=\phi H(\Z)  \phi^{-1}$ is
a group of $C^{1}$ diffeomorphisms of $\mathbb{R}$. 
First we observe that there are elements $f,g\in H(\Z)$ that have linked pairs of fixed points.
For example, consider the hyperbolic element $\gamma$ defined as the projective transformation 
\[ \gamma=\begin{bmatrix}
2 & -1 \\
-1 & 1 
\end{bmatrix},\]
whose fixed points $a,b$ satisfy that $a<-\frac{3}{2}<\frac{1}{2}<b$.
Now define 
\[
f(t)=\begin{cases}
t&\text{if }t\notin [a,b],\\
\gamma(t)&\text{if }t\in [a,b],
\end{cases}
\qquad g(t)=f(t-1)+1.\]
Note that the pairs $a,b$ and $a+1,b+1$ are linked.\footnote{We remark that in general a linked pair may not look like it does in this situation, for instance such maps may have components of support
lying outside $(a,b)$ and $(a+1,b+1)$ respectively.}

Now the elements
\[f_1=\phi f\phi^{-1},\qquad g_1=\phi g\phi^{-1}\]
in $G$ have fixed points $\phi(a),\phi(b)$ and $\phi(c),\phi(d)$ respectively. 
This forms a linked pair.
By Proposition~\ref{p:linked_hyper}, there is an element $g\in G$ with a fixed point such that the derivative of $g$ at $x$ is not equal to $1$.
Now let $g_1=\phi^{-1} g \phi$ be the corresponding element in $H(\Z)$.
Note that $g_1$ fixes $y=\phi^{-1}(x)$.

\smallskip

We claim that $y$ is a fixed point of a hyperbolic matrix in $\PSL(2,\Z)$.
If $y$ is a breakpoint of $g_1$, then this is true because the set of breakpoints of elements in $H(\Z)$ is exatly $\cH_{\Z}$.
We consider the case when $y$ is not a breakpoint of $g_1$, so there exists an element $\gamma_1\in\PSL(2,\Z)$ whose restriction to a neighbourhood $U$ of $y$ coincides with the restriction $g_1\vert_U$.

Observe that since $x$ is a hyperbolic fixed point for $g$, the corresponding point $y$ must be a topological attractor or repellor for $\gamma_1\in\PSL(2,\Z)$ that acts locally like $g_1$ around $y$,
and hence $g_1$ must be hyperbolic and $y$ is hence a hyperbolic fixed point for $\PSL(2,\Z)$.

\smallskip

Now consider an element $g_2\in H(\Z)$ which is the identity on $(-\infty, y)$ and agrees with $g_1$ on $[y,\infty)$.
Then $g_3=\phi g_2\phi^{-1}\in G$ has right derivative $\lambda\neq 1$ at $x$ and a left derivative that equals $1$ at $x$.
This contradicts the assumption that $g_3$ is $C^1$.
Hence our original assumption that $H(\Z)$ is $C^1$-smoothable must be false. 
\end{proof}

\subsection{The Lodha-Moore example}\label{sc:LM}

Lodha and Moore constructed a finitely presented subgroup $G_0$ of Monod's group.
This example provides the first torsion free finitely presentable example solving the Day-von Neumann problem.
The group $G_0$ is generated by $t\mapsto t+1$ together with the following
two homeomorphisms of $\R$:
\[
c(t)=
\begin{cases}
 t&\text{ if }t\leq 0,\\
 \frac{t}{1-t}&\text{ if }0\leq t\leq \frac{1}{2},\\
 3-\frac{1}{t}&\text{ if }\frac{1}{2}\leq t\leq 1,\\
 t+1&\text{ if }1\leq t,\\
\end{cases}
\qquad
d(t)=
\begin{cases}
 \frac{2t}{1+t}&\text{ if }0\leq t\leq 1,\\
t&\text{ if } t\notin [0,1].\\
\end{cases}
\]
The following was proved in \cite{LodhaMoore}:

\begin{thm}
The group $G_0$ is nonamenable and does not contain nonabelian free subgroups.
Moreover, it is finitely presentable with $3$ generators and $9$ relations. 
\end{thm}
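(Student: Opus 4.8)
The statement combines three claims of quite different flavor, and I would establish them separately. I would get the \emph{absence of nonabelian free subgroups} essentially for free from the ambient group. Each piece of $c$ and of $d$ is a M\"obius transformation, and the breakpoints occur at fixed points of hyperbolic elements of $\PSL(2,\R)$; together with the affine map $g\colon t\mapsto t+1$, which fixes $\infty$, this shows $G_0=\langle g,c,d\rangle$ is a subgroup of $\mP\mP_+(\R)$. Since Monod proved that $\mP\mP_+(\R)$ contains no nonabelian free subgroup, and this property is inherited by every subgroup, $G_0$ has none. The only verification needed is the routine inspection that the displayed formulas are genuinely piecewise projective.

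The \emph{nonamenability} is the crux, and the real obstacle, precisely because nonamenability is \emph{not} inherited from the ambient nonamenable group: a subgroup of a nonamenable group may well be amenable, so one cannot simply invoke Monod and must argue with the specific generators $g,c,d$. My plan would be to pass to a combinatorial model, realizing $G_0$ as a group of homeomorphisms of the Cantor set of infinite binary sequences coming from the dyadic coordinates underlying the formulas, and to set up a terminating, confluent rewriting system yielding normal forms for group elements. With such normal forms in hand, I would attempt to locate inside the action a paradoxical configuration --- a subset that is ``doubled'' by suitable group elements --- directly contradicting the existence of a finitely additive invariant mean; equivalently, one seeks the ping-pong dynamics responsible for Monod's nonamenability, now surviving within the finitely generated subgroup $G_0$. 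Carrying out this bookkeeping is where essentially all of the work lies, since producing an explicit paradoxical decomposition for a concrete finitely generated group is delicate.

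For the \emph{finite presentation} the plan is constructive: take $g,c,d$ as generators, write down nine candidate relations encoding the commutations and overlap identities among the pieces of $c$ and $d$, and then prove completeness by the diamond-lemma method. Concretely, I would show that the associated rewriting system is terminating and locally confluent, apply Newman's lemma to obtain unique normal forms, and deduce that the abstract group defined by these three generators and nine relations maps isomorphically onto $G_0$; the relation count is thus an output of the analysis rather than an input. This step is laborious but essentially mechanical once the normal form is available, so it shares its one genuine difficulty --- establishing that normal form --- with the nonamenability argument, which remains the hard part of the whole statement.
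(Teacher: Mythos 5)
First, a point of reference: the paper does not prove this theorem at all. It is imported verbatim from \cite{LodhaMoore} (the text introduces it with ``The following was proved in \cite{LodhaMoore}''), so the only meaningful benchmark is the original Lodha--Moore argument. Your first claim is handled correctly and as in the source: once one checks that $c$ and $d$ are piecewise $\PSL(2,\R)$ with breakpoints at hyperbolic fixed points, $G_0$ sits inside $\mP\mP_+(\R)$ (indeed inside a Monod group $H(A)$), and the absence of nonabelian free subgroups is inherited from Monod's Brin--Squier-type theorem. No issue there.

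The genuine gap is in the nonamenability step, which you yourself identify as the crux but for which you supply only a hope, not an argument: ``locate a paradoxical configuration'' from a normal form. This is not how the result is proved, and the direction is known to be substantially harder than the actual route. Monod's nonamenability proof for $H(A)$ in \cite{monod} is not a ping-pong or paradoxical-decomposition argument; it is a measure-theoretic/co-amenability argument showing that amenability of the stabilizer of $\infty$ would force the nondiscrete, nonamenable group $\PSL(2,A)$ to preserve a mean in a situation where it visibly cannot. Lodha and Moore prove nonamenability of $G_0$ by verifying that this same induction argument still runs when $H(A)$ is replaced by their specific finitely generated subgroup together with a suitable nondiscrete subgroup of $\PSL(2,\R)$; no explicit paradoxical decomposition is exhibited (producing one for these groups was achieved only later, in separate work, and is considerably more delicate). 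So your middle paragraph is a placeholder for the entire difficulty and points toward a strategy that would not yield a proof without major additional ideas. The finite-presentation step is sketched in the right spirit --- the Cantor-set model on $\{\0,\1\}^{\N}$, normal forms, and a completeness argument are indeed the mechanism in \cite{LodhaMoore} --- but again the content is the normal form theorem and the specific nine relations among $x$, $x_\1$, $y_{\1\0}$, neither of which your proposal supplies.
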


In \cite{LodhaMoore} a combinatorial model for $G_0$ is constructed
by means of a faithful action of $G_0$ by homeomorphisms of the Cantor set $\{\0,\1\}^{\mathbb{N}}$.
This model was used to prove that $G_0$ is finitely presentable.
Here $\{\0,\1\}^{\N}$ is the Cantor set of infinite binary sequences, viewed as the boundary of the infinite rooted binary tree.
We denote by $\{\0,\1\}^{<\N}$ as the set of all finite binary sequences, which are addresses of nodes in the infinite rooted binary tree.

Consider the map $\Phi: \{\0,\1\}^\N \to \R\cup \{\infty\}$ given by:
\[\1\1^{a_0}\0^{a_1}1^{a_2}...\mapsto a_0+\frac{1}{a_1+\frac{1}{a_2+\ldots}},\qquad \0\0^{a_0}\1^{a_1}\0^{a_2}...\mapsto -\left (a_0+\frac{1}{a_1+\frac{1}{a_2+\ldots}}\right ).\]
This function is one-to-one except on sequences $\xi$ which are eventually constant.
On sequences which are eventually constant, the map is two-to-one:
$\Phi(s\0\1^{\infty}) = \Phi(s\1\0^{\infty})$ and $\Phi(\0^{\infty}) = \Phi(\1^{\infty}) = \infty$.

It was shown in \cite{LodhaMoore} that upon conjugating $a,b,c$ by $\Phi$ one obtains the following combinatorial model. 
We start with the following map $x:\{\0,\1\}^{\N}\to \{\0,\1\}^{\N}$ as:
\begin{align*}
x(\0\0\xi)&=\0\xi,\\
x(\0\1\xi)&=\1\0\xi,\\
x(\1\xi)&=\1\1\xi
\end{align*}
and also, recursively, the pair of mutually inverse maps $y,y^{-1}:\{\0,\1\}^{\N}\to \{\0,\1\}^{\N}$ as:

\begin{align*}
y(\0\0\xi)&=\0y(\xi),&y^{-1}(\0\xi)&=\0\0y^{-1}(\xi),\\
y(\0\1\xi)&=\1\0y^{-1}(\xi),&y^{-1}(\1\0\xi)&=\0\1y(\xi),\\
y(\1\xi)&=\1\1y(\xi),&y^{-1}(\1\1\xi)&=\1y^{-1}(\xi).
\end{align*}

From these functions, we define the functions $x_{s},y_s:\{\0,\1\}^{\N}\to \{\0,\1\}^{\N}$ for $s \in \{\0,\1\}^{<\N}$
which act as $x$ and $y$ localised to binary sequences which extend $s$:
\[
x_s(\xi)
 =
\begin{cases}
s x(\eta) & \textrm{ if } \xi = s \eta, \\
\xi & \textrm{otherwise},
\end{cases}
\qquad
y_s(\xi)
 =
\begin{cases}
s y(\eta) & \textrm{ if } \xi = s \eta, \\
\xi & \textrm{otherwise}.
\end{cases}
\]
If $s$ is the empty-string, it will be omitted as a subscript.
The group $G_0$ is generated by functions in the set
\[S=\left \{x_t,y_s\mid s,t\in \{\0,\1\}^{<\N},s\neq \0^k,s\neq \1^k, s\neq \emptyset\right \}\]
In fact, $G_0$ is generated by $x,x_{\1},y_{\1\0}$ which correspond respectively to conjugates of the functions $a,b,c$ defined above by $\Phi$.
(See \cite{LodhaMoore} for details.)

It is important to note that $G_0$ acts on the boundary of the infinite rooted binary tree, but not on the tree itself.

\smallskip
Recall from the introduction that we are denoting by $G_2$ the group generated by $f_2,g,h_2$, where $f_2$ is the scalar multiplication by $2$, $g$ is the translation by $1$, and $h_2$ is the element which agrees with $f_2$ to the right of zero and is the identity elsewhere.
We obtain the following obstruction to smoothability of $G_0$.

\begin{lem}\label{l:bbs_in_G0}
The three elements $y_{\1\0\0}^{-1}y_{\1\0\1},y_{\1\0\1}$ and $x_{\1\0}$ generate an isomorphic copy of $G_2$ in the Lodha-Moore group $G_0$.
\end{lem}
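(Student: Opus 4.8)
The plan is to push the three elements onto the Cantor set $\{\0,\1\}^{\N}$, delete the common prefix $\1\0$, and then read the resulting homeomorphisms on $\R\P$ through $\Phi$, recognising them as a topological copy of the generators $f_2,g,h_2$ of $G_2$. The first step is a reduction by prefix stripping. Each of $x_{\1\0},y_{\1\0\0},y_{\1\0\1}$ fixes pointwise every sequence not beginning with $\1\0$, so every element of the group they generate is supported in the cylinder $[\1\0]$ and is determined by its restriction there. Conjugating those restrictions by the prefix-stripping homeomorphism $\sigma\colon[\1\0]\to\{\0,\1\}^{\N}$, $\1\0\eta\mapsto\eta$, carries $x_{\1\0}$ to $x$, $y_{\1\0\0}$ to $y_\0$ and $y_{\1\0\1}$ to $y_\1$ (localising at $\1\0 s$ and then deleting $\1\0$ is localising at $s$). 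This restriction-and-conjugation is an injective homomorphism, so $\langle x_{\1\0},y_{\1\0\0},y_{\1\0\1}\rangle\cong\langle x,y_\0,y_\1\rangle$, the three given elements corresponding to $y_\0^{-1}y_\1$, $y_\1$ and $x$. It therefore suffices to prove $\langle x,y_\0,y_\1\rangle\cong G_2$, with $x\leftrightarrow g$, $y_\1\leftrightarrow h_2$ and $y_\0^{-1}y_\1\leftrightarrow f_2$.

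Next I would identify the generators through $\Phi$. Set $X=\Phi x\Phi^{-1}$ and $Y_\0=\Phi y_\0\Phi^{-1}$, $Y_\1=\Phi y_\1\Phi^{-1}$. A continued-fraction computation gives $X\colon t\mapsto t+1$, which is exactly $g$. The element $Y_\1$ is supported in $[\1]\leftrightarrow[0,+\infty]$; it fixes the endpoints $0=\Phi(\1\0^{\infty})$ and $\infty=\Phi(\1^{\infty})$, has no interior fixed point, and its germ at $0$ has multiplier $2$ — this is forced by the self-similar rule $y(\1\xi)=\1\1 y(\xi)$, which doubles the leading block and so sends $t\approx 1/a$ to $\approx 2/a$. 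Hence $Y_\1$ is a dilation of the positive ray of multiplier $2$, playing the role of $h_2$. Since $y$ is not symmetric under $\0\leftrightarrow\1$, I would compute $Y_\0$ separately: it is supported in $[\0]\leftrightarrow[-\infty,0]$ and, by the same kind of computation, its germ at $0$ has multiplier $\tfrac12$, so $Y_\0$ coincides, up to conjugacy, with $(f_2 h_2^{-1})^{-1}$. As $y_\0$ and $y_\1$ have disjoint supports they commute, so $Y_\0^{-1}Y_\1=(f_2h_2^{-1})h_2$ dilates each ray by the factor $2$ and fixes $0$ and $\infty$: this is precisely $f_2\colon t\mapsto 2t$.

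To assemble the isomorphism, note that the triple $(X,Y_\0^{-1}Y_\1,Y_\1)$ now has the same qualitative dynamics as $(g,f_2,h_2)$: a single common fixed point $\infty$, a translation, and an $f_2$-type dilation together with its positive half of $h_2$-type, with matching multipliers at the relevant fixed points. I would finish by producing a homeomorphism $\psi$ of $\R$ (fixing $\infty$) conjugating $(X,Y_\0^{-1}Y_\1,Y_\1)$ to $(g,f_2,h_2)$, which immediately yields $\langle x,y_\0,y_\1\rangle\cong G_2$ and the stated correspondence of generators. Equivalently one may verify directly that the images satisfy the defining relations of $G_2$ — the substantive one being $FXF^{-1}=X^2$ for $F=Y_\0^{-1}Y_\1$, i.e.\ a copy of $\BS(1,2)=\langle g,f_2\rangle$, the relation $[F,Y_\1]=1$ being automatic from the disjoint supports of $Y_\0,Y_\1$ — and then check faithfulness.

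The prefix stripping and the qualitative picture are routine; the crux is this last step, and I expect it to be the main obstacle in two respects. First, one must certify that the multiplier of $Y_\1$ (and of $Y_\0$) is \emph{exactly} $2$: since $\langle g,f_\lambda\rangle\cong\BS(1,\lambda)$ for integer $\lambda$ and $\BS(1,2)\not\cong\BS(1,3)$, the multiplier is a genuine invariant, so landing on $G_2$ rather than on some other $G_\lambda$ depends on it; it is the doubling recursion $y(\1\xi)=\1\1 y(\xi)$ that pins the value to $2$. Second, one must upgrade the local germ data into a global matching — equivalently, establish faithfulness, ruling out any relation beyond those of $G_2$, so that in particular the commutator $[g,h_2 g h_2^{-1}]$ does not collapse. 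This simultaneous conjugacy (or, on the algebraic side, the faithfulness of the action together with the exact verification of the Baumslag–Solitar relation) is where the real work lies.
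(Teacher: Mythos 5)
Your skeleton is the same as the paper's: strip the common prefix $\1\0$ to reduce to $\langle x,y_\0,y_\1\rangle$ acting on the full tree, then transport by $\Phi$ and recognise the triple $(x,\,y_\0^{-1}y_\1,\,y_\1)$ as $(g,f_2,h_2)$. The prefix-stripping step is fine and is exactly the paper's second paragraph.

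The gap is in the identification step, and you flag it yourself: you only compute germ multipliers at $0$ and then declare that upgrading this local data to a global conjugacy (or a faithfulness check) is ``where the real work lies,'' without doing it. The missing ingredient is the exact global statement, proved in \cite{LodhaMoore} and simply cited by the paper: $\Phi x\Phi^{-1}$ \emph{is} $t\mapsto t+1$ and $\Phi (y_\0^{-1}y_\1)\Phi^{-1}$ \emph{is} $t\mapsto 2t$ on all of $\R$, not merely maps with the right multipliers at $0$. Once you have this, everything you postpone evaporates: since $y_\0$ and $y_\1$ are supported in the disjoint cylinders $[\0]$ and $[\1]$, which $\Phi$ sends to $[-\infty,0]$ and $[0,+\infty]$, the factor $\Phi y_\1\Phi^{-1}$ must agree with $t\mapsto 2t$ on $[0,+\infty]$ and be the identity on $[-\infty,0]$ --- it literally equals $h_2$, it does not just ``play the role of'' $h_2$. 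So $\Phi$ itself conjugates the triple onto $(g,f_2,h_2)$ exactly; no auxiliary conjugating homeomorphism $\psi$, no simultaneous-conjugacy argument from qualitative dynamics, and no separate faithfulness verification are needed. Your proposed fallback --- ``verify the defining relations of $G_2$ and then check faithfulness'' --- is not viable as stated: $G_2$ is defined as a concrete subgroup of $\Homeo_+(\R)$, not by a presentation, and by Remark~\ref{r:F} it contains a copy of Thompson's group $F$, so its relations are not a finite list one could check; the only relation you name ($FXF^{-1}=X^2$) presents $\BS(1,2)=\langle f_2,g\rangle$, which is a proper subgroup and says nothing about $h_2$. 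In short: the route is right, but the crux you defer is precisely the computation from \cite{LodhaMoore} that the paper invokes, and with it in hand the remainder is immediate rather than ``the real work.''
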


\begin{proof}
It was demonstrated in \cite{LodhaMoore} that the elements $x$ and $y_{\0}^{-1}y_\1$
are conjugate respectively to $t\mapsto t+1$ and $t\mapsto 2t$ by $\Phi$. Hence they generate an isomorphic copy of $\BS(1,2)$.
In particular,  $y_{\0}^{-1}y_{\1},y_{\1},x_{\1\0}$ generate an isomorphic copy of $G_2$.

It is easy to see that the groups $\left \langle y_{\1\0\0}^{-1}y_{\1\0\1},y_{\1\0\1},x_{\1\0}\right \rangle$ and $\left \langle y_{\0}^{-1}y_{\1},y_\1,x\right \rangle$ 
are isomorphic, since their respective actions on boundaries of the binary trees, $T_1$ rooted at the empty sequence and $T_2$ rooted at the sequence $\1\0$, are the same.
\end{proof}

More explicitly, one can verify that the elements $y_{\1\0\0}^{-1}y_{\1\0\1},y_{\1\0\1},x_{\1\0}$ correspond via $\Phi$ to the following piecewise projective transformations:
\begin{center}
\begin{minipage}{0.4\textwidth}
\[
x_{\1\0}\sim\begin{cases}
\begin{bmatrix}
1 & 0\\
-1 & 1
\end{bmatrix}&\text{on }\left[0,\frac{1}{3}\right ],\\
\\
\begin{bmatrix}
4 & -1\\
5 & -1
\end{bmatrix}&\text{on }\left[\frac{1}{3},\frac12\right ],\\
\\
\begin{bmatrix}
0 & 1\\
-1 & 2
\end{bmatrix}&\text{on }\left[\frac{1}{2},1\right ],\\
\\
id&\text{on }\R\setminus [0,1],
\end{cases}
\]
\end{minipage}
\begin{minipage}{0.4\textwidth}
\begin{align*}
y_{\1\0\1}\sim& \begin{cases}
\begin{bmatrix}
3 & -1\\
2 & 0
\end{bmatrix}&\text{on }\left [\frac{1}{2},1\right ],\\
\\
id&\text{on }\R\setminus \left [\frac12,1\right ],
\end{cases}
\\
\\
y_{\1\0\0}^{-1}y_{\1\0\1}\sim &
\begin{cases}
\begin{bmatrix}
1 & 0\\
-2 & 2
\end{bmatrix}&\text{on }\left [0,\frac{1}{2}\right ],\\
\\
\begin{bmatrix}
3 & -1\\
2 & 0
\end{bmatrix}&\text{on }\left [\frac12,1\right ],
\\
\\
id&\text{on }\R\setminus [0,1].
\end{cases}
\end{align*}
\end{minipage}
\begin{figure}[ht]
\[
\includegraphics[scale=0.6]{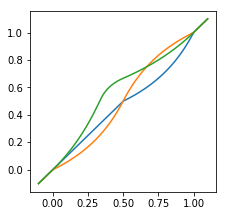}
\]
\caption{The generators $y_{\1\0\0}^{-1}y_{\1\0\1}$ (yellow), $y_{\1\0\1}^{-1}$ (blue), $x_{\1\0}$ (green) in a neighbourhood of $[0,1]$.}
\end{figure}
\end{center}

\begin{proof}[Proof of Theorem~\ref{t:LM}]
As a consequence of Lemma~\ref{l:bbs_in_G0}, the group $G_0$ contains a subgroup $H$ isomorphic to $G_2$.
Let $\rho:G_0\to\Diff_+^1([0,1])$ be a morphism. By a direct application of Theorem \ref{t:main1}, we obtain that the kernel of $\rho$ contains some nontrivial element of $H$. Thus $\rho$ is not injective. 

Now, it has been proven in \cite{commutators} that every proper quotient of $G_0$ is abelian, whence we get our result: as we have just shown that the kernel is not trivial, then the image must be abelian, as we wanted to prove.
\end{proof}

\subsection{Further examples}

An interesting family of nonamenable groups is obtained adding translations in top of $F$ (defined as in Definition \ref{ThurstonT}).
Mimicking Monod's argument, it is not difficult to prove the following:
\begin{prop}
For any $\alpha\in(0,1)$, the group of piecewise projective homeomorphisms generated by $F$ and the translation $t\mapsto t+\alpha$ is nonamenable.
\end{prop}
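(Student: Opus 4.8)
The plan is to mimic Monod's nonamenability argument directly for the finitely generated group $\bar G_\alpha:=\langle F,\tau_\alpha\rangle$, where $\tau_\alpha\colon t\mapsto t+\alpha$. Every element of $\bar G_\alpha$ fixes $\infty$ and is piecewise projective, with projective pieces lying in the subgroup $\Gamma_\alpha:=\langle \PSL(2,\Z),\tau_\alpha\rangle\subset\PSL(2,\R)$; thus $\bar G_\alpha$ sits inside the stabiliser of $\infty$ of a Monod-type group built on $\Gamma_\alpha$. The mechanism I want to reproduce is the following: if $\bar G_\alpha$ were amenable, then, since it fixes $\infty$ and acts on $\R=\R\P\setminus\{\infty\}$, it would admit an invariant mean $m$ on $\R$; the piecewise-projective structure then allows one to realise germs of arbitrary elements of $\Gamma_\alpha$ by honest elements of $\bar G_\alpha$, and pushing $m$ forward by these elements manufactures a $\Gamma_\alpha$-invariant probability measure on $\R\P$. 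As a nonelementary subgroup of $\PSL(2,\R)$ admits no invariant probability measure on $\R\P$, this is a contradiction.

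Two group-theoretic inputs make this work. First, $\Gamma_\alpha$ is nonelementary, as it contains $\PSL(2,\Z)$, and a nonelementary subgroup of $\PSL(2,\R)$ preserves no Borel probability measure on $\R\P$. Second, $\Gamma_\alpha$ is non-discrete. When $\alpha$ is irrational this is immediate, since $\Gamma_\alpha$ contains the translations by the dense subgroup $\Z+\Z\alpha$ of $\R$. When $\alpha=p/q$ is rational in lowest terms (with $q\ge2$ because $\alpha\in(0,1)$), from $\tau_1,\tau_\alpha\in\Gamma_\alpha$ one gets $\tau_{1/q}\in\Gamma_\alpha$, and conjugating by $t\mapsto-1/t\in\PSL(2,\Z)$ produces the lower unipotent with entry $1/q$ as well; these elementary matrices generate the group $\PSL(2,\Z[1/q])$, which is dense in $\PSL(2,\R)$ because $\Z[1/q]$ is dense in $\R$. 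In either case $\Gamma_\alpha$ is non-discrete, so the set of breakpoints occurring in elements of $\bar G_\alpha$ — an orbit of the rational points $\cP_\Z$ under the dynamics — is dense in $\R$. This density is precisely the feature that $F$ alone lacks (its germ group $\PSL(2,\Z)$ is discrete, and indeed the amenability of $F$ is open), which is why the hypothesis $\alpha\notin\Z$ is essential.

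The heart of the argument, and the step I expect to be the main obstacle, is the local realisation together with the assembly of the invariant measure. Concretely, one must check that for every $\gamma\in\Gamma_\alpha$ and every interval $I\subset\R$ with endpoints in the (dense) breakpoint set there is an element $h\in\bar G_\alpha$ fixing $\infty$ and agreeing with $\gamma$ on $I$; here one genuinely uses that the ambient group at one's disposal is only the finitely generated $\bar G_\alpha$ and not the full Monod group $H(\Gamma_\alpha)$, so the construction of $h$ by concatenating restrictions of $F$-elements and powers of $\tau_\alpha$ across dense breakpoints must be carried out by hand. Granting this, invariance of $m$ under all such $h$ shows that $m$ is locally $\Gamma_\alpha$-invariant, and a weak-$*$ compactness argument upgrades the finitely additive mean into a genuine $\Gamma_\alpha$-invariant Borel probability measure on $\R\P$, contradicting the nonelementariness of $\Gamma_\alpha$ and thereby proving that $\bar G_\alpha$ is nonamenable.
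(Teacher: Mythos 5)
The paper offers no proof of this proposition at all --- it is stated with the single remark that it follows by ``mimicking Monod's argument'' --- so the only question is whether your proposal actually carries that mimicry out. It does not, and the gap sits exactly where the mathematical content lives. Your roadmap (amenability $\Rightarrow$ invariant mean on $\R$; local realisation of germs of $\Gamma_\alpha=\langle\PSL(2,\Z),\,t\mapsto t+\alpha\rangle$ by elements of $\bar G_\alpha$; assembly into a $\Gamma_\alpha$-invariant object on $\R\P$; contradiction with nonelementarity) is the correct one, and your identification of the two inputs --- nonelementarity and, crucially, nondiscreteness of $\Gamma_\alpha$, which is what fails for $F$ alone --- is on target. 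But the realisation lemma is precisely the step you announce as ``the main obstacle'' and then dispose of with ``Granting this''. That is not a proof of the proposition; it is a proof that the proposition follows from an unproved lemma. Moreover the lemma as you state it is false: if $\gamma(I)\ni\infty$, no $h\in\bar G_\alpha$ (which fixes $\infty$, hence maps $\R$ to $\R$) can agree with $\gamma$ on $I$. One must restrict to intervals with $\infty\notin\overline{I}\cup\gamma(\overline{I})$, and a correct proof would proceed by writing $\gamma$ as a word in $\PSL(2,\Z)$ and $t\mapsto t+\alpha$, realising each $\PSL(2,\Z)$-letter near the relevant intermediate point by an element of $F$ (which requires, and has, the transitivity of the piecewise-$\PSL(2,\Z)$ groupoid on rationals) and using the translation as is --- keeping track of the finitely many exceptional positions where an intermediate point hits $\infty$.

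The second, related, gap is in the ``assembly'' step. Local invariance plus finite additivity gives $m(\gamma E)=m(E)$ only after decomposing $E$ into finitely many pieces on each of which $\gamma$ is realised in $\bar G_\alpha$; such a finite decomposition cannot cover neighbourhoods of $\gamma^{-1}(\infty)$ nor the two ends of $\R$ (where $\gamma$ with $\gamma(\infty)\neq\infty$ cannot be realised by an $\infty$-fixing element), and a finitely additive mean is perfectly capable of concentrating all its mass there. Your ``weak-$*$ compactness upgrade'' silently assumes this does not happen. This is not a pedantic point: it is exactly the place where the argument must break for $F$ alone (otherwise the same reasoning, applied with $\Gamma=\PSL(2,\Z)$, would produce a $\PSL(2,\Z)$-invariant mean on $\P^1(\Q)\setminus\{\infty\}$ and hence prove $F$ nonamenable, which is a famous open problem). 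Monod's paper deals with this by a careful choice of the orbit on which the mean lives and of the coset/Bruhat decomposition used to reduce to finitely many $\gamma$'s modulo the ($\infty$-fixing, hence harmless) affine part; some version of that bookkeeping has to appear here too, adapted to the fact that your ambient group is only $\langle F,\,t\mapsto t+\alpha\rangle$ and not all of $H(\Gamma_\alpha)$. Until the realisation lemma is proved in its corrected form and the exceptional points are controlled, the proposal is a plausible plan rather than a proof.
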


Observe that the groups $\langle F,t\mapsto t+\alpha\rangle$ appearing in the above statement are naturally of $C^1$~diffeomorphisms.

\begin{thm}\label{t:Firr}
For any irrational $\alpha\in (0,1)$, the action of the group of piecewise projective homeomorphisms $\langle F,t\mapsto t+\alpha\rangle$ on the compactified real line $[-\infty,+\infty]$ is not $C^2$-smoothable.
\end{thm}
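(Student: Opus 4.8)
The plan is to argue by contradiction, using as the engine the third mechanism of Section~\ref{s:Mech}: the dense set of germs at the point at infinity rigidifies the local $C^2$ structure there. Suppose $\phi\colon[-\infty,+\infty]\to[-\infty,+\infty]$ is a homeomorphism with $\phi\langle F,t\mapsto t+\alpha\rangle\phi^{-1}\subset\Diff^2_+$. The first thing to stress is that the natural action is genuinely $C^1$: Thurston's model realises $F$ as $C^1$ diffeomorphisms (Definition~\ref{ThurstonT}) and the translation is projective, so there is no $C^1$ obstruction. Thus the entire difficulty is to manufacture one at the level of \emph{second} derivatives, which is exactly what forces the hypothesis $r=2$.

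First I would locate the source of rigidity. Both endpoints $\pm\infty$ are globally fixed, and the germs at $+\infty$ of the stabiliser already contain the germs of $t\mapsto t+1$ (which lies in $F$) and of $t\mapsto t+\alpha$. In the affine chart $s=-1/t$ near $+\infty$ a translation $t\mapsto t+\kappa$ reads $s\mapsto s+\kappa\,s^2+O(s^3)$, so these germs form an \emph{abelian} group of parabolic germs whose translation numbers are precisely $\Z+\alpha\Z$; since $\alpha$ is irrational this group is dense in $(\R,+)$. This is exactly the density hypothesis required by the third mechanism, and it is the single place where irrationality enters.

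Next I would feed this into the Szekeres machinery of \cite{hyperbolic}. The image $\widetilde\tau=\phi\,(t\mapsto t+\alpha)\,\phi^{-1}$ is a $C^2$ diffeomorphism having $\phi(+\infty)$ as an isolated fixed point with no other fixed point on its side, hence it admits a unique $C^1$ Szekeres flow $\{\Phi^u\}$ of which it is the time-one map. Every $C^2$ germ commuting with $\widetilde\tau$, in particular the germ of $\phi\,(t\mapsto t+1)\,\phi^{-1}$, is a time-$u$ map, and the assignment $g\mapsto u(g)$ reproduces the topological invariant $\Z+\alpha\Z$. The outcome is a canonical local $C^1$ structure at $\phi(+\infty)$: density forces $\phi$ to respect not only the derivative but the leading parabolic (second-order) coefficient of this flow, so that this second-order datum becomes a conjugacy invariant. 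This is the adaptation, one order higher, of the slogan that ``the hyperbolic nature of a fixed point cannot change''.

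Finally I would confront this rigid germ with the global dynamics of the generator $c$. On the one hand $c$ coincides with $t\mapsto t+1$ near $+\infty$, so its germ there is pinned by the Szekeres structure above; on the other hand $c$ is the identity on $[-\infty,0]$ and equals the parabolic map $t/(1-t)$ to the right of $0$, whence at the interior fixed point $0$ it is $C^1$ but $c''(0^-)=0\neq 2=c''(0^+)$ — a genuine second-order break. To transport the rigid invariant from $+\infty$ down to the neighbourhood of $0$ I would use that the successive-fixed-point intervals $(0,+\infty)$ of $c$ and $(-\alpha,+\infty)$ of $(t\mapsto t+\alpha)^{-1}c\,(t\mapsto t+\alpha)$ are linked, so Proposition~\ref{p:linked_hyper} supplies controlled elements along a chain of overlapping supports joining $\infty$ to $0$. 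The conclusion I aim for is that $\widetilde c=\phi c\phi^{-1}$ would have to be $C^2$-tangent to the flow structure with mismatched second-order coefficients on the two sides of $\phi(0)$, which is impossible for a $C^2$ map. The hard part will be precisely this transport step: the Szekeres rigidity is intrinsically local at $\infty$, and carrying the second-order invariant inward without losing control is where both the irrationality of $\alpha$ (to keep the germ group dense, hence rigid) and the $C^2$ hypothesis (so that Kopell-type bounded-distortion estimates apply) are indispensable — and it is exactly what explains why this group is $C^1$-smoothable yet not $C^2$-smoothable.
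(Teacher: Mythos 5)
You have correctly located the engine of the proof --- the germs of $t\mapsto t+1$ and $t\mapsto t+\alpha$ at a fixed endpoint generate a rank-two, hence dense, abelian group of parabolic germs, and Szekeres' theorem converts that density into $C^2$ rigidity of the conjugacy --- and your chart computation and the role of irrationality are exactly right. But the argument breaks down at the step you yourself flag as ``the hard part'', and that step should not exist. You treat the Szekeres rigidity as an invariant that lives only at $\phi(+\infty)$ and must then be \emph{transported} to the breakpoint of $c$ via Proposition~\ref{p:linked_hyper} and a chain of linked pairs. This cannot work: Proposition~\ref{p:linked_hyper} is a $C^1$ hyperbolicity mechanism --- it produces an element with a hyperbolic fixed point --- and carries no second-order information whatsoever; worse, $\langle F,\,t\mapsto t+\alpha\rangle$ acts naturally by $C^1$ diffeomorphisms of $[-\infty,+\infty]$, so any argument that manufactured a genuine $C^1$ obstruction from such linked pairs would prove too much. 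The hyperbolic point it yields is simply not an obstruction here.

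The missing observation is that the rigidity is already global, because the translations have no interior fixed points: $f=T_{-1}$ and $g=T_{-|\alpha|}$ are $C^2$ (indeed projective) \emph{contractions of the whole of} $[-\infty,+\infty)$ towards $-\infty$, so in the identification of $[-\infty,+\infty]$ with $[0,1]$ the hypotheses of Proposition~\ref{p:nonC2} hold on $[0,a]$ for \emph{every} $a<1$. The Claim inside its proof (conjugating $f^{\ell}g^{m}$ by large powers of $f$ to realise a dense set of flow times on any prescribed $[0,a]$) then forces $\phi$ to conjugate Szekeres flow to Szekeres flow, hence to be $C^2$, on $(0,a]$ for every $a$, i.e.\ on all of $\R$. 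No transport is needed: this is precisely Theorem~\ref{t:nonC2}, already proved in the paper, applied with $f=T_{-1}$, $g=T_{-|\alpha|}$, and $h$ any element of $F$ with a $C^2$ discontinuity point (the generator $c$ will do), choosing $a$ so that $[0,a]$ corresponds to $[-\infty,N]$ with $N$ large enough to contain the break and its image. Your first two paragraphs are essentially the paper's hypothesis-checking; the final paragraph replaces an available global statement by an invalid mechanism, and your worry about Kopell-type distortion estimates is likewise unnecessary --- the $C^2$ hypothesis enters only through Szekeres' theorem, and irrationality only through the density of $\Z+\alpha\Z$.
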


\begin{proof}
We denote by $T_\alpha$ the translation by $\alpha$.
If $\alpha$ is irrational, then $T_1$ and $T_\alpha$ generate an abelian free group of rank $2$ of $C^2$ (even real analytic) diffeomorphisms of $\R$. The maps $f=T_{-1}$ and $g=T_{-|\alpha|}$ are contractions on $\R$. 
Consider any element $h\in \langle F,T_\alpha\rangle$ with a $C^2$ discontinuity point on $\R$. Then Theorem~\ref{t:nonC2} implies directly that the action of  $\langle F,T_\alpha\rangle$ on $[-\infty,+\infty]$ is not $C^2$-smoothable.
\end{proof}

For \emph{rational} translations $T_\alpha$, we can extend the previous argument and prove that even the action on the \emph{non-compactified} real line $(-\infty,+\infty)$ is not $C^2$-smoothable.
\begin{thm}\label{t:Frat}
For any rational $\alpha\in (0,1)$, the action of the group of piecewise projective homeomorphisms $\langle F,t\mapsto t+\alpha\rangle$ on $\R$ is not $C^2$-smoothable.
\end{thm}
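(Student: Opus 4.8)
The plan is to deduce Theorem~\ref{t:Frat} from the same $C^2$ rigidity result, Theorem~\ref{t:nonC2}, that handled the irrational case, but applied at a \emph{finite} fixed point rather than at $\infty$; localising the obstruction at a finite point is exactly what upgrades the conclusion from the compactified to the non-compactified line. First I would record that for $\alpha=p/q$ in lowest terms one has $\langle F,T_\alpha\rangle=\langle F,T_{1/q}\rangle$, since $T_1\in F$ and $\Z+\tfrac pq\Z=\tfrac1q\Z$. The reason the proof of Theorem~\ref{t:Firr} does not carry over verbatim is that its density input---the group of translation numbers $\Z+\alpha\Z$ of the parabolic germs of $T_1,T_\alpha$ at $\infty$---collapses, for rational $\alpha$, to the \emph{discrete} group $\tfrac1q\Z$; moreover $\infty\notin\R$. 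So I must find a finite point whose stabiliser germs form a dense group, and obtain density from the arithmetic of $\Z[\tfrac1q]$ rather than from irrationality.

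I would take the finite point $p=0$, which is fixed by $c$. Its right germ is the parabolic $t\mapsto t/(1-t)$, i.e.\ the matrix $\begin{pmatrix}1&0\\-1&1\end{pmatrix}$, of translation number $1$; its left germ is the identity; and a direct check shows that $0$ and $\tfrac12$ are $C^2$-breakpoints of $c$ (at $\tfrac12$ the second derivative jumps from $16$ to $-16$). The key step is to produce inside $G=\langle F,T_{1/q}\rangle$ an element $\beta$ fixing $0$ with a \emph{hyperbolic} right germ, of derivative $\mu\neq1$ with $\mu\in\Z[\tfrac1q]$; the natural candidate is a germ of the scaling $\mathrm{diag}(q,q^{-1})\in\PSL(2,\Z[\tfrac1q])$ (the prototype of such an element being the Lodha--Moore generator $d$, whose right germ at $0$ is $t\mapsto 2t/(1+t)$, i.e.\ scaling by $2$). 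Given such a $\beta$, the conjugates $\beta^kc\beta^{-k}$, $k\ge0$, all fix $0$ with parabolic right germs of translation number $\mu^{-k}$, so the germs at $0^+$ of the stabiliser $\mathrm{Stab}_G(0)$ generate the additive group $\Z[\tfrac1\mu]=\Z[\tfrac1q]$---which, although consisting of rationals, is \emph{dense} in $\R$.

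With this configuration in place the conclusion is immediate: the stabiliser germs at the finite point $0$ are dense, and $c$ has a $C^2$-breakpoint at $\tfrac12$, which lies in the basin $(0,+\infty)$ of $0$ for the contraction $\beta^{-1}$. Theorem~\ref{t:nonC2} then forbids any topological conjugate of $G$ from lying in $\Diff^2_+$; since the whole argument takes place at the finite point $0$, it rules out $C^2$-smoothability of the action on the non-compactified line $(-\infty,+\infty)$, as claimed.

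The main obstacle is the realisability step of the middle paragraph: exhibiting $\beta\in\langle F,T_{1/q}\rangle$ with a hyperbolic germ at a finite point. This genuinely requires the translation: since $F$ (Thurston's model) is $C^\infty$-smoothable it can have no such configuration, and indeed every germ of an $F$-element at a rational point is parabolic (the $\PSL(2,\Z)$-stabiliser of a rational is unipotent), while at the quadratic-irrational fixed points where $F$ does have hyperbolic germs the germ group is merely \emph{cyclic}. Adjoining $T_{1/q}$ enlarges the coefficient ring to $\Z[\tfrac1q]$, in which $\PSL(2,\Z[\tfrac1q])$ does contain the hyperbolic scaling; the difficulty is that this scaling and the unipotents $\begin{pmatrix}1&0\\ q^{-k}&1\end{pmatrix}$ do \emph{not} fix $\infty$ and so are not elements of $G$, so one must realise their germs at $0$ by explicit piecewise elements of $\langle c,T_{1/q}\rangle$ (with $d$ as the model when $q=2$). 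Verifying this membership, and hence the density of the stabiliser germs, is the technical heart of the proof; everything else is a direct invocation of the $C^2$ rigidity packaged in Theorem~\ref{t:nonC2}.
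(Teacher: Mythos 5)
Your strategy---localise the obstruction at a finite fixed point and feed a dense group of commuting stabiliser germs into the Szekeres rigidity of Proposition~\ref{p:nonC2}---is the same one the paper uses, but your implementation diverges from it and leaves two genuine gaps. The paper works directly at the point $\alpha$ and takes for its two commuting contractions the germ of an explicit element of $F$ (a $\PSL(2,\Z)$-parabolic fixing $\alpha$) and the germ of $(T_\alpha cT_\alpha^{-1})^{-1}$ (a parabolic of $\PSL(2,\Z[\alpha])$ fixing $\alpha$), so no realisability question arises. You instead need an element $\beta\in\langle F,T_{1/q}\rangle$ fixing $0$ with a hyperbolic germ; you rightly call this the technical heart, but you do not construct it, and your proposed prototype is in fact wrong: the Lodha--Moore generator $d$ does \emph{not} lie in $\langle F,T_{1/2}\rangle$. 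The germ at $0$ of any element of $\langle F,T_{1/q}\rangle$ fixing $0$ is a product of germs of $\PSL(2,\Z)$ and of $T_{1/q}$, hence lies in the stabiliser of $0$ in $\PSL(2,\Z[1/q])$, a lower-triangular group whose elements have derivative at $0$ equal to the square of a unit of $\Z[1/q]$; for $q=2$ this forces derivative in $4^{\Z}$, while $d'(0)=2$. A hyperbolic germ of derivative $q^{\pm2}$ does exist in the group (e.g.\ $T_{1/q}\tilde\gamma$ for a suitable $\tilde\gamma\in F$ sending $0$ to $-1/q$ with prescribed right germ), and $\Z[1/q^{2}]=\Z[1/q]$ is still dense, but this realisation must be proved, not asserted.

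The second gap is the closing invocation of Theorem~\ref{t:nonC2}. Its hypothesis (condition~3 of Proposition~\ref{p:nonC2}) requires \emph{two} commuting contractions whose $C^2$ germs generate a free abelian group of rank $2$, i.e.\ whose times in the common Szekeres flow are rationally independent; that is exactly what made the irrational case of Theorem~\ref{t:Firr} immediate. At a rational point every parabolic germ you produce has unipotent parameter in $\Q$, so any two of them generate a discrete cyclic group and condition~3 can never hold for a pair; your density group $\Z[1/q]$ only appears once you use the entire infinite family $\{\beta^{k}c\beta^{-k}\}_{k\ge0}$ (and $\beta$ itself cannot serve as the second contraction, since a hyperbolic germ does not commute with a parabolic one). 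So you cannot quote Theorem~\ref{t:nonC2} as a black box: you must reprove Proposition~\ref{p:nonC2} with hypothesis~3 replaced by ``a commuting family of contractions realising a dense set of Szekeres times on a fixed interval $[0,a]$''. The proof does adapt (the conjugation-by-$f^{N}$ trick in its Claim already handles shrinking domains), but this is a missing step---and the same commensurability issue deserves scrutiny in the paper's own argument, whose two parabolics at $\alpha=p/q$ have parameters $1$ and $q^{2}$ and are therefore also rationally dependent. Finally, a small point: Theorem~\ref{t:nonC2} needs the breakpoint $t$ to satisfy $h(t)\in(0,a)$, and $c(1/2)=1$ sits at the endpoint of your interval, so a different $h$ (any element of $F$ with a breakpoint deep inside $(0,a)$) should be used.
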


\begin{proof}
We consider the conjugate of $c$ by $T_\alpha$:
\[
T_\alpha c T_{\alpha}^{-1}(t)=\begin{cases}
 t&\text{ if }t\leq \alpha,\\
 \frac{t-\alpha}{1-(t-\alpha)}+\alpha&\text{ if }\alpha\leq t\leq \frac{1}{2}+\alpha,\\
 3-\frac{1}{t-\alpha}+\alpha&\text{ if }\frac{1}{2}+\alpha\leq t\leq 1+\alpha,\\
 t+1&\text{ if }1+\alpha\leq t.\\
\end{cases}
\]
In restriction to the interval $\left[\alpha,\frac{1}{2}+\alpha\right]$, the element $T_\alpha c T_{\alpha}^{-1}$ coincides with the projective transformation
\[
\begin{bmatrix}
1-\alpha & \alpha^2 \\
-1 & 1+\alpha
\end{bmatrix},
\]
which is a parabolic element in $\PSL(2,\Z[\alpha])$ fixing $\alpha$. It is not in $\PSL(2,\Z)$.

Inside Thompson's $F$ we can find an element $f$ such that:
\begin{itemize}
\item $f$ fixes $\alpha$,
\item the restriction of $f$ to the interval $\left[ \alpha,\tfrac12+\alpha\right ]$ is $C^2$,
\item $f$ is a contraction of the interval $\left[ \alpha,\tfrac12+\alpha\right ]$, namely $f(t)<t$ for any $t$ in the right neighbourhood $\left( \alpha,\tfrac12+\alpha\right )$ of $\alpha$.
\end{itemize} 
Indeed, since $\alpha$ is rational, there exists a parabolic element in $\PSL(2,\Z)$ with $\alpha$ as fixed point, and we can take for $f$ any element of $F$ which coincides with this element (or its inverse) in restriction to $\left[ \alpha,\tfrac12+\alpha\right ]$.

Finally, consider an element $h\in F$ which has a $C^2$ discontinuity point $p$ on $\left[ \alpha,\tfrac12+\alpha\right ]$, with $h(p)\in \left[ \alpha,\tfrac12+\alpha\right ]$.

It is straightforward to verify that $f$, $g=\left (T_\alpha c T_{\alpha}^{-1}\right )^{-1}$ and $h$ satisfy the requirements of Theorem~\ref{t:nonC2} (when considering the interval $\left[ \alpha,\tfrac12+\alpha\right ]$ as the interval $[0,a]$ in the statement). Thus the theorem is proved.
\end{proof}

\section{$C^1$ actions of affine and piecewise affine groups}
\label{s:Aff}
\subsection{\emr{Baumslag-Solitar groups and affine groups}}
Let $n>1$ be an integer. 
The classical \emph{Baumslag-Solitar} groups $\BS(1,n)$ are defined by the presentations
\[
\BS(1,n)=\left \langle a,b\mid aba^{-1}=b^n\right \rangle.
\]
They are naturally realized as subgroups of the affine group $\mathsf{Aff}_+(\R)\subset \PSL(2,\R)$, generated by the homothety $a(x)=n x$, and the translation $b(x)=x+1$.

Similarly, for any rational $\lambda=p/q>1$ there is a morphism from the Bausmlag-Solitar group
\[
\BS(q,p)=\left \langle a,b\mid ab^qa^{-1}=b^p\right \rangle
\] 
to the subgroup $A_\lambda$ of $\mathsf{Aff}_+(\R)$ generated by $a(x)=\lambda x$ and $b(x)=x+1$. However, when $p/q>1$ is not an integer, this morphism is not an isomorphism. 
\emr{For general $\lambda>1$, we define $A_\lambda$ to be the subgroup of $\mathsf{Aff}_+(\R)$ generated by $a(x)=\lambda x$ and $b(x)=x+1$.
Observe that the conjugate $aba^{-1}$ equals the translation $x\mapsto x+\lambda$, and hence $b$ and $ab a^{-1}$ commute in $A_\lambda$ (this is not true for nonsolvable Baumslag-Solitar groups $\BS(q,p)$, $p/q>1$ not integer). The group $A_\lambda$ is \emph{abelian by cyclic}, abstractly isomorphic to the semi-direct product $\Z[\lambda,\lambda^{- 1}]\rtimes \Z$ (where $\Z$ acts on $\Z[\lambda,\lambda^{- 1}]$ by multiplication by $\lambda$). 
More precisely, for $\lambda$ transcendental, $A_\lambda$ is isomorphic to the wreath product $\Z\wr \Z\cong \Z[t,t^{-1}]\rtimes \Z$, whereas if algebraic the following properties hold.
\begin{lem}
	\label{lem:abel_aff}
	Let $\lambda>1$ be an algebraic real number of degree $d$, and let $p_\lambda(t)=\frac{\a_0}{\a_d}+\frac{\a_1}{\a_d}t+\ldots +\frac{\a_{d-1}}{\a_d}t^{d-1}+t^d$, $\a_j\in \Z$, denote the associated minimal polynomial.
	\begin{enumerate}[(1)]
	\item \label{item:Qrank} The group $H=\Z[\lambda,\lambda^{-1}]$ has \emph{$\Q$-rank} equal to $d$: it is an additive subgroup of $\Q(\lambda)\cong \Q^d$, and does not embed in a $\Q$-vector space of lower dimension.
	\item \label{item:matrix} With respect to the basis $\{1,\lambda,\ldots,\lambda^{d-1}\}$, the homothety $a$ acts on $H$ as multiplication by the companion matrix $C_\lambda$, so that one has $A_\lambda\cong H\rtimes_{C_\lambda} \Z$.
	\item  The group $A_\lambda$ is a quotient of the finitely presented group
	\begin{equation}\label{PresentationAff}
	\widehat A_\lambda=\left\langle \widehat a, b_0,\ldots,b_{d-1} \left\vert
	\begin{array}{lr}
	b_ib_j=b_jb_i&  \text{for }i,j=0,\ldots,d-1,\\
	\widehat ab_j\widehat a^{-1}=b_{j+1} & \text{for }j=0,\ldots, d-2,\\ 
	\widehat ab_{d-1}^{\a_d}\widehat a^{-1}=b_0^{-\a_0}\cdots b_{d-1}^{-\a_{d-1}} & \\
	\end{array}
	\right.
	\right\rangle,
	\end{equation}
	where generators $b_j$ are mapped to the translations $x\mapsto x+\lambda^j$ in the standard affine action, and $\widehat a$ to the homothety of factor $\lambda$.
	\item\label{item:abelian} The abelianization of $\widehat A_\lambda$ is the abelian group $\Z\times \Z/(\a_d\, p_\lambda(1)\Z)$, where the factor $\Z$ is generated by the image of $\widehat a$, and the finite factor $\Z/(\a_d\, p_\lambda(1)\Z)$ is generated by the image of any $b_j$.
	In particular, any torsion-free abelian quotient of $A_\lambda$ is either trivial or infinite cyclic.
	\end{enumerate}
\end{lem}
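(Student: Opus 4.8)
The four items are largely independent pieces of linear and combinatorial algebra, and I plan to treat them in the order stated, relying throughout on a single computation: writing $b_v$ for the translation $x\mapsto x+v$, conjugation by the homothety rescales it, $a\,b_v\,a^{-1}=b_{\lambda v}$, since $a b_v a^{-1}(x)=\lambda(\lambda^{-1}x+v)=x+\lambda v$. For item \eqref{item:Qrank}, the key remark is that $\{1,\lambda,\ldots,\lambda^{d-1}\}$ is a $\Q$-basis of $\Q(\lambda)$ that already lies in $H=\Z[\lambda,\lambda^{-1}]$, so the $\Q$-span of $H$ is all of $\Q(\lambda)$, which has dimension $d$; conversely $H\subset\Q(\lambda)$ because rearranging $p_\lambda(\lambda)=0$ exhibits $\lambda^{-1}$ as a $\Q$-combination of $1,\lambda,\ldots,\lambda^{d-1}$. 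This fixes the $\Q$-rank at exactly $d$.

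For item \eqref{item:matrix}, I would first identify the normal subgroup $N$ of $A_\lambda$ generated by $b$. By the rescaling formula its conjugates are precisely the translations $b_{\lambda^{k}}$, $k\in\Z$, and since translations commute, $N$ is the group of translations indexed by the additive subgroup of $\R$ generated by $\{\lambda^k:k\in\Z\}$; as this additive group is closed under multiplication it coincides with the ring $H$, so $N\cong H$. Written in the basis $\{1,\lambda,\ldots,\lambda^{d-1}\}$, multiplication by $\lambda$ sends $\lambda^j\mapsto\lambda^{j+1}$ for $j<d-1$ and $\lambda^{d-1}\mapsto\lambda^d=-\sum_{j=0}^{d-1}(\a_j/\a_d)\lambda^j$, which is exactly the companion matrix $C_\lambda$ from \eqref{Companion}. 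Finally, since $a^k$ is a translation only for $k=0$ (using $\lambda>1$), the extension $1\to N\to A_\lambda\to\langle a\rangle\to 1$ splits, giving $A_\lambda\cong H\rtimes_{C_\lambda}\Z$.

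For the third item I would define the map on generators by $\widehat a\mapsto a$ and $b_j\mapsto b_{\lambda^j}$ and check that the relations of \eqref{PresentationAff} hold in $A_\lambda$: commutativity of the $b_j$ is automatic for translations, each relation $\widehat a b_j\widehat a^{-1}=b_{j+1}$ is an instance of the rescaling formula, and the final relation says precisely $\a_d\lambda^d=-\sum_{j=0}^{d-1}\a_j\lambda^j$, i.e.\ $p_\lambda(\lambda)=0$. Surjectivity being clear, $A_\lambda$ is a quotient of $\widehat A_\lambda$ (note that only this easy direction is claimed, so no reverse inclusion is needed). For item \eqref{item:abelian} I would abelianize $\widehat A_\lambda$: conjugation becomes trivial, so the relations $\widehat a b_j\widehat a^{-1}=b_{j+1}$ collapse to $B_0=\cdots=B_{d-1}=:B$, while the last relation becomes $(\a_0+\cdots+\a_d)B=0$; recognizing $\a_0+\cdots+\a_d=\a_d\,p_\lambda(1)$ and noting that the image of $\widehat a$ is unconstrained yields the abelianization $\Z\times\Z/(\a_d\,p_\lambda(1)\Z)$, with $\a_d\,p_\lambda(1)\neq 0$ since $\lambda\neq 1$. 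The final assertion then follows: any abelian quotient of $A_\lambda$ factors through this group, and a torsion-free such quotient kills the finite factor and is hence a quotient of $\Z$, i.e.\ trivial or infinite cyclic.

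None of this is deep; the two places that call for a little care are the claim that the additive group generated by the powers of $\lambda$ is already the full ring $H$ — which is what makes the normal closure of $b$ equal to $H$ and completes the semidirect-product description — and the bookkeeping in the last relation, where one must identify the coefficient sum $\sum_{j=0}^d\a_j$ with $\a_d\,p_\lambda(1)$ and confirm it is nonzero. I expect the abelianization computation of item \eqref{item:abelian} to be the most error-prone step, since it is the one place where the exponents $\a_d$ and $-\a_0,\ldots,-\a_{d-1}$ in the presentation must be tracked correctly.
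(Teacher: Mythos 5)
Your proof is correct and is exactly the elementary verification that the paper explicitly omits (``The proof being elementary, we rather omit it''), down to the one point the authors do record: that $\a_d\,p_\lambda(1)=\sum_{j=0}^{d}\a_j$ is nonzero because $1$ is not a root of the minimal polynomial of $\lambda\neq 1$. All four items, including the identification of the normal closure of $b$ with the translations by $H=\Z[\lambda,\lambda^{-1}]$ and the bookkeeping in the abelianization, check out.
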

The proof being elementary, we rather omit it. For the last statement, observe that $p_\lambda$ is the minimal polynomial of $\lambda$ (which is a real algebraic number $\neq 1$), hence $1$ cannot be a root and therefore $\a_d\, p_\lambda(1)$ is always a nonzero integer.}

\subsection{$C^1$ actions of \emr{abelian-by-cyclic groups}}

\emr{In \cite{FarbFranksI} Farb and Franks, relying on Kopell's lemma, show that every $C^2$ action of $\BS(q,p)$ on one-dimensional manifolds quotients through an action of its image $\Z[p/q,q/p]\rtimes \Z$ in $\mathsf{Aff}_+(\R)$. To the best of our knowledge, nothing appears in the literature about actions in lower regularity.}

\emr{The reason why actions of (solvable) Baumslag-Solitar groups are widely studied is because of the simple presentation, given by just one relation, $ab^ma^{-1}=b^n$, which has a dynamical meaning: $a$ conjugates a power of $b$ to another power. One of the first relevant works in this subject is the aforementioned \cite{FarbFranksI}, where the authors study general actions of $\BS(q,p)$ on one-manifolds. This was pursued by Burslem-Wilkinson \cite{BW}, where they study sufficiently regular actions of $\BS(1,n)$ on the circle. Later improvements are due to Guelman-Liousse \cite{GL}, and finally to Bonatti-Monteverde-Navas-Rivas \cite{hyperbolic}. For actions on higher-dimensional manifolds, McCarthy \cite{McCarthy} proved that $C^1$ perturbations of the trivial action of torsion-free, finitely presented, abelian-by-cyclic groups are not faithful. Another example of rigidity result was obtained by Asaoka \cite{A1,A2} for standard actions of the same class of groups on spheres and tori, and also by Wilkinson-Xue \cite{WX} for actions on tori. Finally, planar actions of $\BS(1,n)$ have been investigated by several authors \cite{GL2,ARX,AGRX}.}

In relation with our work, Bonatti-Monteverde-Navas-Rivas study the $C^1$ actions on the interval of abelian-by-cyclic groups like $A_\lambda$. 
The following result appears in \cite[\S~4.3]{hyperbolic} (even if not explicitly stated for general $\lambda>1$, the arguments in \cite[\S~4.3]{hyperbolic} only use the condition $\lambda\ge 2$, which is always guaranteed, up to taking an integer power of $a$):
\begin{prop}\label{p:affine}
Fix an arbitrary $\lambda>1$ and let $\phi:\R\to\R$ be a homeomorphism such that $\phi A_\lambda\phi^{-1}$ is in $\Diff_+^1([0,1])$. 
Then $\phi a\phi^{-1}$ has derivative equal to $\lambda$ at its interior fixed point $\phi(0)$.
\end{prop}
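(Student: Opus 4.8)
The plan is to pin down the derivative $D:=(\phi a\phi^{-1})'(\phi(0))$ by confronting two independent estimates of one and the same geometric quantity: the rate at which a canonical sequence of ``landmark'' points converges to the interior fixed point. First I would reduce to the case $\lambda\ge 2$: replacing $a$ by a power $a^k$ replaces $\lambda$ by $\lambda^k$ and $D$ by $D^k$, so it suffices to treat $\lambda\ge 2$, and the conclusion $D^k=\lambda^k$ then yields $D=\lambda$ by extracting positive roots. Write $F:=\phi a\phi^{-1}$ and $p:=\phi(0)$ for its interior fixed point; since $0$ is a topological repeller for $a$ on the interior, $p$ is a topological repeller for $F$, so $D\ge 1$. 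For $t\in\Z[\lambda,\lambda^{-1}]$ write $B_t=\phi\circ(x\mapsto x+t)\circ\phi^{-1}$ for the conjugated translations. By Lemma~\ref{lem:abel_aff} these form a commuting family with $FB_tF^{-1}=B_{\lambda t}$, and none of them has an interior fixed point. The goal is $D=\lambda$.

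Next I would introduce the landmarks $q_n:=\phi(\lambda^{-n})=F^{-n}(\phi(1))$, which decrease to $p$, together with the fundamental interval $J:=[p,q_0]=[\phi(0),\phi(1)]$, and estimate $|q_n-p|$ in two ways. On the \emph{dynamical} side, $F^{n}$ maps $[p,q_n]$ onto $J$; applying the mean value theorem and using that $F$ is $C^1$ with $F'(p)=D$, one checks that the $F$-orbit of any point of $[p,q_n]$ spends all but boundedly many steps in an arbitrarily small neighbourhood of the repeller $p$, so that $(F^n)'$ on $[p,q_n]$ grows at exponential rate $\log D$. This gives $\tfrac1n\log\frac{1}{|q_n-p|}\to\log D$. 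On the \emph{combinatorial} side, the $B_{\lambda^{-n}}$-orbit of $p$, namely the points $B_{\lambda^{-n}}^{\,m}(p)=\phi(m\lambda^{-n})$, cuts $J$ into roughly $\lambda^{n}$ subintervals (those with $0\le m\le\lambda^n$), the first of which is exactly $[p,q_n]$. Hence the \emph{average} subinterval length has exponential rate $\lambda^{-n}$.

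The two rates coincide exactly when $D=\lambda$, which is the desired conclusion — provided one knows that the $\approx\lambda^n$ subintervals cutting $J$ are comparable in size, so that the first subinterval $[p,q_n]$ inherits the average exponential rate $\lambda^{-n}$. I expect this bounded-distortion statement to be the main obstacle: in $C^2$ regularity it would follow from Kopell's lemma and the usual Denjoy–Schwartz distortion control, but here the hypothesis is only $C^1$, where such control is not automatic. The way around it, following \cite{hyperbolic}, is to exploit the self-similarity furnished by $FB_{\lambda^{-n}}F^{-1}=B_{\lambda^{-(n-1)}}$: the renormalisation $\xi_n=F^{n}$ conjugates every $B_{\lambda^{-n}}$ back to the single fixed $C^1$ map $B_1$, so that the configuration near $p$ looks the same at every scale, and the uniform continuity of $F'$ near $p$ converts this self-similarity into the required distortion bound. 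Here the reduction $\lambda\ge 2$ is used precisely to guarantee that two consecutive fine steps $B_{\lambda^{-n}}^{2}(p)=\phi(2\lambda^{-n})$ stay inside a single coarser fundamental domain $[q_n,q_{n-1}]$, which is what makes the covering estimate run. Finally, the identical argument applied on the left of $p$ (using $t<0$) completes the proof.
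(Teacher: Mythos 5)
Your overall frame is sound, and you have independently found the one piece of genuinely original content in the paper's treatment of Proposition~\ref{p:affine}: the paper gives no proof at all, but imports the statement from \cite{hyperbolic} with exactly the remark you make, namely that the argument there is written for $\lambda\ge 2$ and one reduces to that case by replacing $a$ with a power (so $D^k=\lambda^k$ forces $D=\lambda$). Your ``dynamical side'' is also correct and even slightly over-engineered: since $q_n\to p$ and $F$ is differentiable at $p$, the ratios $(q_{n}-p)/(q_{n-1}-p)$ converge to $1/D$, which already gives $\tfrac1n\log\tfrac{1}{|q_n-p|}\to\log D$ without any mean value theorem along the orbit.

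The gap is the one you flag yourself, and your proposed mechanism for closing it does not work as stated. Knowing that the $\lfloor\lambda^n\rfloor$ tiles $[\phi(m\lambda^{-n}),\phi((m+1)\lambda^{-n})]$ have total length $|J|$ says nothing about the \emph{first} tile $[p,q_n]$ unless you can compare it to the others, and in $C^1$ regularity there is no a priori distortion control. Your suggested fix --- renormalise by $\xi_n=F^n$, which conjugates $B_{\lambda^{-n}}$ to $B_1$, and invoke uniform continuity of $F'$ near $p$ --- runs into the following obstruction: the renormalised configuration is the tiling of $[\phi(0),\phi(\lambda^n)]$ by the unit translates $[\phi(m),\phi(m+1)]$, and these exhaust the whole half-interval up to the far fixed point. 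The tile of index $m$ at level $n$ has length $\int_{[\phi(m),\phi(m+1)]}(F^{-n})'$, and for $m$ comparable to $\lambda^n$ the backward orbit $F^{-j}([\phi(m),\phi(m+1)])$ spends all but the last $O(1)$ of its $n$ steps \emph{outside} any fixed neighbourhood of $p$, where continuity of $F'$ at $p$ gives no information; nor is there any uniform $C^1$ bound on the infinite family $\{B_t\}_{t\in[0,1]\cap\Z[\lambda^{\pm1}]}$ that would let you compare $|B_{m\lambda^{-n}}([p,q_n])|$ with $|[p,q_n]|$. So ``self-similarity plus uniform continuity of $F'$'' does not convert into the two-sided estimate $c_\eps(\lambda+\eps)^{-n}\le|q_n-p|\le C_\eps(\lambda-\eps)^{-n}$ that your scheme needs, and both inequalities are required (one for $D\le\lambda$, one for $D\ge\lambda$). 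This missing comparability is precisely the technical heart of \cite[\S~4.3]{hyperbolic}, where it is established by a substantially more involved argument exploiting the commuting family $f^{-k}gf^k$; the present paper deliberately treats it as a black box, and your proposal, read as a standalone proof, leaves that same black box open.
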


\emr{For \emr{Galois hyperbolic} $\lambda>1$, for instance as $\lambda>1$ is rational, we obtain from \cite{hyperbolic} a much stronger statement:
\begin{thm}\label{t:BMNR}
Let $\lambda>1$ be a Galois hyperbolic number and consider a $C^1$ action of the abelian-by-cyclic group $A_\lambda$ on the closed interval, without global fixed points in its interior. 
	Then every nonabelian action of $A_\lambda$ is topologically conjugate to the standard affine action.
\end{thm}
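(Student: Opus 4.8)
The plan is to reconstruct the affine model from the internal structure of $A_\lambda$ given in Lemma~\ref{lem:abel_aff}: one has $A_\lambda\cong H\rtimes_{C_\lambda}\Z$ with $H=\Z[\lambda,\lambda^{-1}]$ of $\Q$-rank $d$, and Galois hyperbolicity of $\lambda$ means exactly that the companion matrix $C_\lambda$ has no eigenvalue on the unit circle; in particular $1\notin\mathrm{spec}(C_\lambda)$. I would first pin down the dynamics of the cyclic generator $a$, then show that the normal abelian subgroup $H$ acts freely on the interior, and finally assemble a conjugacy to the standard affine action via Hölder's theorem. Throughout I may assume, after restricting to a minimal invariant interval, that the endpoints are global fixed points and that there is no global fixed point in the interior. \emph{Dynamics of $a$.} Since $a$ conjugates the translation-like elements of $H$ by the expanding factor $\lambda>1$, the exponential growth of orbits forces $a$ to have a hyperbolic interior fixed point: this is precisely the content of Proposition~\ref{p:affine} (and of the $\mathsf{BS}(1,n)$ analysis in \cite{hyperbolic}), giving a unique interior fixed point $p$ at which $a$ has derivative exactly $\lambda$, hence is locally repelling.

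\emph{The abelian subgroup acts freely.} I would next argue that $H$ has no fixed point in the open interior. The set $\Fix(H)$ is closed and, as $H$ is normal, it is $a$-invariant: if $h(q)=q$ for all $h\in H$, then $h(a(q))=a((a^{-1}ha)(q))=a(q)$ because $a^{-1}ha\in H$. If $\Fix(H)$ met the interior at some $q\neq p$, then $a^{-n}(q)$ would converge to the repelling point $p$, forcing $p\in\Fix(H)$. But if all of $H$ fixed $p$, the assignment $\ell\colon h\mapsto \log h'(p)$ would be a homomorphism $H\to\R$, and differentiating the relation $aha^{-1}=C_\lambda h$ at the common fixed point $p$ (where the factors $a'(p)$ and $(a^{-1})'(p)$ cancel) would give $\ell\circ C_\lambda=\ell$. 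Extending $\ell$ to a linear functional on $\Q(\lambda)\otimes\R\cong\R^d$, this says $\ell$ is a left eigenvector of $C_\lambda$ of eigenvalue $1$; since $1\notin\mathrm{spec}(C_\lambda)$ we obtain $\ell\equiv 0$, i.e.\ every $h\in H$ fixing $p$ is tangent to the identity there. Granting that such tangencies are impossible in the present regularity (the crux, discussed below), $\Fix(H)$ cannot meet the interior, so $H$ acts freely.

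\emph{Building the conjugacy.} A free action of the abelian group $H$ on the open interval is, by Hölder's theorem, semiconjugate to an action by translations through the translation-number homomorphism $\tau\colon H\to\R$, which is an order embedding. The element $a$ descends to an increasing map $\widetilde a$ of the translation coordinate satisfying $\widetilde a(s+\tau(h))=\widetilde a(s)+\tau(C_\lambda h)$; since an increasing conjugacy between dense translation subgroups must scale the translation vectors by a constant, this forces $\tau\circ C_\lambda=\mu\,\tau$ for some $\mu>0$, so that $\tau$ is a positive left eigenvector of $C_\lambda$. Order-positivity of $\tau$ together with the derivative $\lambda$ found above identifies $\mu=\lambda$, the real place of $\lambda$. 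As $\tau(H)$ is dense in $\R$, the functional equation and continuity give $\widetilde a(s)=\lambda s$ after recentering; hence $a$ is conjugated to $x\mapsto\lambda x$ and $H$ to the translations $x\mapsto x+\tau(h)$, which is exactly the standard affine action. I would then upgrade the semiconjugacy to a genuine conjugacy by excluding wandering intervals and an exceptional minimal set, neither of which can be invariant under the dense translation subgroup.

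\emph{The main obstacle.} The hard part is the $C^1$ step hidden in the second paragraph: ruling out that $H$ fixes the hyperbolic point $p$, equivalently that no nontrivial element of $H$ is tangent to the identity at $p$ while being normalized by the repelling map $a$. In $C^2$ regularity this is immediate from Kopell's lemma, but that tool is unavailable here; the resolution requires the finer $C^1$ estimates of \cite{hyperbolic}, controlling the derivatives along the $a$-orbit and exploiting the full hyperbolicity of $C_\lambda$ (no eigenvalue of modulus $1$, not merely $1\notin\mathrm{spec}(C_\lambda)$) to prevent the accumulation of distortion. This is exactly the point where the mere local indicability of $A_\lambda$ is insufficient and the arithmetic of $\lambda$ enters decisively.
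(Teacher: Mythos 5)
Your outline follows the internal logic of \cite{hyperbolic} rather than the paper's actual proof, which is a two-line black-box application: one checks via items \eqref{item:Qrank} and \eqref{item:matrix} of Lemma~\ref{lem:abel_aff} that $A_\lambda\cong H\rtimes_{C_\lambda}\Z$ satisfies the hypotheses of \cite[Thm.~1.3]{hyperbolic} (this is exactly where Galois hyperbolicity enters), concluding that the action is topologically conjugate to a representation into $\mathsf{Aff}_+(\R)$, and then invokes the classification of affine representations \cite[Prop.~2.1]{hyperbolic} to see that a nonabelian one must send $a$ to the homothety of factor $\lambda$ and $b$ to a translation. Your attempt to reconstruct the interior of that theorem is a reasonable sketch of its strategy, but as a proof it has genuine gaps beyond the one you flag yourself.

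First, your very first step is circular: Proposition~\ref{p:affine} is stated for a homeomorphism $\phi$ with $\phi A_\lambda\phi^{-1}\subset\Diff^1_+([0,1])$, i.e.\ for a $C^1$ action \emph{already known} to be topologically conjugate to the standard affine one. It does not apply to an abstract $C^1$ representation of $A_\lambda$, so it cannot be used to produce the hyperbolic interior fixed point $p$ of $a$ --- producing that point for an arbitrary nonabelian $C^1$ action is part of what \cite[Thm.~1.3]{hyperbolic} proves, not an input to it. Second, the step you call the ``main obstacle'' (excluding that $H$ fixes $p$ with every element tangent to the identity there) is indeed the crux, and you leave it open; note also that your eigenvector argument only uses $1\notin\mathrm{spec}(C_\lambda)$, whereas the counterexamples in \cite[\S~5]{hyperbolic} (recalled in Remark~\ref{couterex}) show that the full condition --- no eigenvalue of modulus $1$ --- is genuinely needed, so any correct closing of this gap must use more than the linear-algebra step you describe. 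Third, the upgrade from the H\"older semiconjugacy to a conjugacy is not a formality: a free action of a dense translation group on the line can be semiconjugate but not conjugate to translations (blow up an orbit), and excluding wandering intervals in $C^1$ regularity is again nontrivial and is part of the content of the cited theorem. In short, your proposal correctly identifies the shape of the argument but does not constitute a proof independent of \cite[Thm.~1.3]{hyperbolic}; the paper's proof consists precisely of verifying that theorem's hypotheses and then classifying the resulting affine representations.
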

\begin{proof}
	Properties \eqref{item:Qrank} and \eqref{item:matrix} in Lemma~\ref{lem:abel_aff} guarantee that the hypotheses of \cite[Thm.~1.3]{hyperbolic} are satisfied for the group $A_\lambda$, provided $\lambda>1$ is Galois hyperbolic. This gives that any $C^1$ action of $A_\lambda$ on the closed interval, without global fixed points in its interior, is topological conjugate to a representation of $A_\lambda$ into the affine group $\mathsf{Aff}_+(\R)$. Representations $\psi:A_\lambda \to \mathsf{Aff}_+(\R)$ are classified by \cite[Prop.~2.1]{hyperbolic}: when the image of $\psi$ is nonabelian, (1) the generator $a$ of $A_\lambda$ (the homothety of factor $\lambda$) is mapped to itself and (2) the generator $b$ of $A_\lambda$ (the translation) is mapped to some translation. Therefore $\psi$ is conjugate to the standard affine action.
\end{proof}
\begin{rem}\label{couterex}
Observe that the statement above cannot be true as $\lambda>1$ is transcendental, because $A_\lambda\cong \Z\wr\Z$ has many distinct actions on the interval. Moreover, in \cite[\S~5]{hyperbolic}, even as $\lambda>1$ is algebraic but not Galois hyperbolic, there are further $C^1$ actions of $A_\lambda$.
\end{rem}
}

\subsection{The groups $G_\lambda$}

Inspired by the definition of Monod's groups, we consider an analogous construction starting from these affine groups.
Here we repeat the definition already given in the introduction:

\begin{dfn}\label{BrokeBS}
For any $\lambda>1$, we define $G_\lambda$ to be the subgroup of $\mP\mP_+(\R)$ generated by the elements
\[
a(x)=\lambda x,\quad
a_+(x)=\begin{cases} x & \text{if }x\le 0\\
\lambda x & \text{if }x>0\end{cases},\quad b(x)=x+1.
\]
We also set $a_-=aa_+^{-1}$, which agrees with $a$ to the left of $0$ and is the identity elsewhere.
\end{dfn}

\begin{rem}
In the introduction, we were denoting $a,b,a_+$ by $f_\lambda,g,h_\lambda$ respectively.
\end{rem}

\emr{\begin{lem}\label{lem:abel_G} Let $\lambda>1$ be an algebraic number.
	The image of the generator $b\in G_\lambda$ is trivial in any torsion-free abelian quotient of $G_\lambda$. Indeed, any such quotient is either trivial, or infinite cyclic, or isomorphic to $\Z^2$, generated by the images of $a_\pm$.
\end{lem}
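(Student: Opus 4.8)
The plan is to pass to the abelianization $G_\lambda^{\mathrm{ab}}$ and to extract everything from the single relation encoding the minimal polynomial of $\lambda$. First I would record two elementary facts about the generators. The homeomorphisms $a_+$ and $a_-=aa_+^{-1}$ have disjoint supports, namely $(0,+\infty)$ and $(-\infty,0)$, so they commute and satisfy $a=a_+a_-$. Consequently $G_\lambda=\langle a_+,a_-,b\rangle$, and its abelianization is generated by the three images $\overline{a_+},\overline{a_-},\overline b$, with the additional relation $\overline a=\overline{a_+}+\overline{a_-}$ coming for free.

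The heart of the argument is to prove that $\overline b$ is a torsion element of $G_\lambda^{\mathrm{ab}}$. For this I would work inside the affine subgroup $A_\lambda=\langle a,b\rangle\subset G_\lambda$. Writing $b_j=a^jba^{-j}$ for the translation by $\lambda^j$, the relation $p_\lambda(\lambda)=0$, i.e. $\alpha_d\lambda^d=-\sum_{j=0}^{d-1}\alpha_j\lambda^j$, translates into the identity of affine maps
\[
a\,b_{d-1}^{\alpha_d}\,a^{-1}=b_0^{-\alpha_0}\cdots b_{d-1}^{-\alpha_{d-1}},
\]
which is exactly the last relation of $\widehat A_\lambda$ appearing in Lemma~\ref{lem:abel_aff}. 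Being a genuine equality of homeomorphisms, it holds in $A_\lambda$ and hence in $G_\lambda$. Abelianizing $G_\lambda$ makes every conjugate $b_j$ equal to $\overline b$, so the relation collapses to $\big(\sum_{j=0}^{d}\alpha_j\big)\overline b=0$, that is $\alpha_d\,p_\lambda(1)\,\overline b=0$. Since $\lambda\neq 1$ is a root of the irreducible polynomial $p_\lambda$, the value $1$ is not, so $N:=\alpha_d\,p_\lambda(1)$ is a nonzero integer; thus $\overline b$ has finite order dividing $N$.

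With this in hand the lemma follows quickly. Let $q\colon G_\lambda\to Q$ be any torsion-free abelian quotient. It factors through $G_\lambda^{\mathrm{ab}}$, and since $\overline b$ is torsion while $Q$ is torsion-free, one gets $q(b)=0$. Therefore $Q$ is generated by the two elements $q(a_+),q(a_-)$. A torsion-free abelian group generated by two elements is free of rank at most two, hence isomorphic to $0$, $\Z$, or $\Z^2$; in the last case $q(a_+),q(a_-)$ necessarily form a basis. This is exactly the asserted trichotomy.

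I expect the only delicate point to be the passage from the polynomial identity to the group relation and its abelianization: one must check that the displayed equation is a bona fide relation in $G_\lambda$ (it is, because it already holds as an identity of affine maps inside $A_\lambda$) and that the integer $N$ annihilating $\overline b$ does not accidentally vanish (guaranteed because $1$ is not a root of the minimal polynomial of $\lambda\neq 1$). Everything else is formal; in particular I never need a full presentation of $G_\lambda$, since for the upper bound it suffices to exhibit the two relations $a=a_+a_-$ and $N\overline b=0$.
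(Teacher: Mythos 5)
Your proof is correct and follows the same route as the paper: the paper's one-line proof simply invokes item (4) of Lemma~\ref{lem:abel_aff} (whose proof is omitted there), and your derivation of the relation $\a_d\,p_\lambda(1)\,\overline b=0$ from the conjugation identity $a b_{d-1}^{\a_d}a^{-1}=b_0^{-\a_0}\cdots b_{d-1}^{-\a_{d-1}}$ is exactly the computation underlying that item. Your additional checks --- that the relation holds as a genuine identity of affine maps, that $\a_d\,p_\lambda(1)\neq 0$ because $1$ is not a root of $p_\lambda$, and that the resulting quotient generated by $q(a_\pm)$ is free abelian of rank at most two --- correctly fill in the details the paper leaves implicit.
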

\begin{proof}
	By \eqref{item:abelian} in Lemma~\ref{lem:abel_aff}, every image of the generator $b$ in an abelian group must be of finite order.
\end{proof}}

\begin{rem}\label{r:F}
The algebraic structure of $G_\lambda$ is highly complicated. 
For instance, in the case $\lambda=2$, inside the group $G_2$, the elements $b$ and $[a_+,b]$ are the generators of Thompson's $F$, in its natural piecewise linear action on $\R$.
\emr{In fact, every group $G_{\lambda}$ contains a copy of $F$.
To see this, let 
\[f_1=b^{-1} a_+ b,\qquad f_2=b a_- b^{-1}.\]
The (open) support of $f_1$ is the half-line $J_1=(-1,+\infty)$ whereas the support of $f_2$ is the half-line $J_2=(-\infty,1)$. These supports form a \emph{chain} $(J_1,J_2)$ in the sense of \cite{chains}. Then by \cite[Thm.~3.1]{chains}, there exists $n\in \mathbf{N}$ such that $\langle f_1^n,f_2^n\rangle$ is isomorphic to Thompson's group $F$.}
\end{rem}

\begin{ex}
There are two canonical standard affine actions of the group $G_\lambda$ on the real line that factor through the affine group $A_\lambda$.
First, as every element in $G_\lambda$ fixes $\pm\infty$, we can consider the germs of elements of $G_\lambda$ at these two points. 
This gives us two surjective homomorphisms
\[
\rho_\pm:G_\lambda\to A_\lambda.
\]
It is clear from the definition of $G_\lambda$ that we have
\[
\rho_\pm(a_\mp)=id,\quad \rho_\pm(a_\pm)=\rho(a)
\]
for these two morphisms. More generally, every element of $G_\lambda$ that is the identity outside a compact interval belongs to the kernels of both morphisms $\rho_\pm$. This is the case for the commutator $[b,a_+ba_+^{-1}]$ that appears in the statement of Theorem~\ref{t:main1Hyp}.

\emr{On the other hand, as the abelianization of $G_\lambda$ is not trivial, there are plenty of abelian actions of $G_\lambda$ on the real line. Recall thats any group of orientation preserving homeomorphisms of the real line is torsion free, therefore as $\lambda>1$ is algebraic, Lemma~\ref{lem:abel_G} implies that the generator $b$ must be in the kernel of any abelian action. In particular, as $\lambda>1$ is algebraic, the commutator $[b,a_+ba_+^{-1}]$ always acts trivially.}
\end{ex}

\subsection{$C^1$ actions of  $G_\lambda$}

The following result is essentially the one contained in Theorem~\ref{t:main2}:

\begin{thm}\label{t:affine}
For any $\lambda>1$, the natural action of $G_\lambda$ on the compactified real line $[-\infty,+\infty]$ is not $C^1$-smoothable.
\end{thm}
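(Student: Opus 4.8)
The plan is to argue by contradiction. Suppose there is a homeomorphism $\phi$ of $[-\infty,+\infty]$ with $\phi G_\lambda\phi^{-1}\subset\Diff^1_+([-\infty,+\infty])$. We may assume $\phi$ preserves orientation (the orientation-reversing case being entirely symmetric), so that $\phi$ fixes the two endpoints and sends the interior fixed point $0$ of $a$ to an interior point $\phi(0)$. Write $\tilde a=\phi a\phi^{-1}$ and $\tilde a_\pm=\phi a_\pm\phi^{-1}$, all of which are $C^1$. The first step is to extract rigidity from the affine subgroup $A_\lambda=\langle a,b\rangle\subset G_\lambda$, whose restricted action on $[-\infty,+\infty]$ is precisely the standard affine action and has no global fixed point in the interior (because $b$ has none). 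Since $\phi A_\lambda\phi^{-1}\subset\Diff^1_+$, Proposition~\ref{p:affine} applies directly and yields the key statement that $\tilde a$ has derivative exactly $\lambda$ at $\phi(0)$.

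The second step turns the decomposition $a=a_+a_-$ against this. The maps $a_+$ and $a_-$ have disjoint supports $(0,+\infty)$ and $(-\infty,0)$, so they commute, and both fix $0$. Crucially, $a_+$ is the identity on the entire left half-line $[-\infty,0]$, hence $\tilde a_+$ is the identity on the left-neighbourhood $[-\infty,\phi(0)]$ of $\phi(0)$; being $C^1$, its derivative at $\phi(0)$ coincides with its left derivative there, which is $1$. Symmetrically $\tilde a_-$ is the identity on the right-neighbourhood $[\phi(0),+\infty]$, so $\tilde a_-'(\phi(0))=1$ as well. Applying the chain rule to $\tilde a=\tilde a_+\tilde a_-$ at the common fixed point $\phi(0)$ gives $\tilde a'(\phi(0))=\tilde a_+'(\phi(0))\cdot\tilde a_-'(\phi(0))=1$, which contradicts $\tilde a'(\phi(0))=\lambda>1$.

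The conceptual heart of the argument—and the point I would emphasise—is that the homothety $a$ is perfectly smooth through $0$ (it has no breakpoint there), so its hyperbolicity is invisible if one looks at $a$ alone; it is only by splitting $a=a_+a_-$ into its two one-sided ``halves'', each of which is forced to be tangent to the identity on one side, that the obstruction surfaces. All the genuine dynamical work is imported through Proposition~\ref{p:affine}, which guarantees that the hyperbolic derivative $\lambda$ of $a$ at its interior fixed point survives any topological conjugacy into $\Diff^1_+$; I expect this to be the only delicate input, with the remainder reducing to a short one-sided derivative computation. The minor points to verify carefully are that $\phi(0)$ is genuinely interior, so that one-sided neighbourhoods make sense, and that the orientation-reversing case reduces to the same computation after interchanging the roles of the two half-neighbourhoods.
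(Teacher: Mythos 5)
Your proposal is correct and follows essentially the same route as the paper: both import all the dynamical content from Proposition~\ref{p:affine} (the derivative of $\phi a\phi^{-1}$ at $\phi(0)$ equals $\lambda$) and then derive the contradiction from the fact that $a_\pm$ are each the identity on a one-sided neighbourhood of $0$. The paper phrases the endgame as ``$\phi a_+\phi^{-1}$ cannot be $C^1$ at $\phi(0)$ since its one-sided derivatives there would be $1$ and $\lambda$,'' whereas you reach the same contradiction via the chain rule on $a=a_+a_-$; this is only a cosmetic difference.
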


\begin{proof}
We argue by contradiction. After Proposition~\ref{p:affine}, if there existed a homeomorphism $\phi:\R\to\R$ such that $\phi A_\lambda\phi^{-1}$ was in $\Diff_+^1([-\infty,+\infty])$, 
then $\phi a\phi^{-1}$ would have derivative equal to $\lambda$ at $p=\phi(0)$ and $\phi a_+ \phi^{-1}$ would not be $C^1$ at $p$. Hence the group is not $C^1$-smoothable.
\end{proof}

\begin{rem}
In the previous statement, it is fundamental to consider the action of $G_\lambda$ on the compactified line. Indeed, the statement is no longer true if one simply considers the action on $\R$ (see \cite[Remark 4.14]{hyperbolic}).
\end{rem}

Our second result, more precise than the statement in Theorem~\ref{t:main1Hyp}, says that every $C^1$ action of $G_\lambda$ on the interval, for \emr{Galois hyperbolic} $\lambda>1$, is always described by combining the examples above.

\begin{thm}\label{t:key}
\emr{Let $\lambda>1$ be Galois hyperbolic and} let $\rho:G_\lambda\to \Diff^1_+([0,1])$ be a nontrivial homomorphism. Then there exists finitely many pairwise disjoint subinterval $I_1,\ldots,I_n\subset [0,1]$ such that
\begin{enumerate}[(1)]
\item for any $i=1,\ldots,n$, the image $\rho(G_\lambda)$ preserves the interval $I_i$,
\item for any $i=1,\ldots,n$, the restriction of $\rho(G_\lambda)$ to $I_i$ is topologically conjugate to one of the two canonical actions on $\R$.
\item the restriction of $\rho(G_\lambda)$ to the complement $[0,1]\setminus \bigcup_{i=1}^n I_i$ is abelian.
\end{enumerate}
In particular the group $G_\lambda$ admits no faithful $C^1$ action on the closed interval.
\end{thm}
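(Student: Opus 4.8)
The plan is to stratify $[0,1]$ by the dynamics of the single homothety $a$ and to bootstrap from the rigidity already available for the affine subgroup $A_\lambda=\langle a,b\rangle$. First I would pass to the connected components $K$ of the complement of $\Fix(\rho(G_\lambda))$; each such $K$ is automatically $\rho(G_\lambda)$-invariant and carries an action without global interior fixed points. The decisive invariant of $K$ is whether $\rho(a)$ has a \emph{hyperbolic} fixed point inside it. The key finiteness input is that the set $N$ of fixed points of $\rho(a)$ with derivative equal to $\lambda$ is finite: if distinct such $p_k\to q$, then $q\in\Fix(\rho(a))$ and, by $C^1$-continuity of the derivative, $\rho(a)'(q)=\lim_k\rho(a)'(p_k)=\lambda\neq1$, so $q$ is a hyperbolic fixed point, hence isolated in $\Fix(\rho(a))$, contradicting $p_k\to q$. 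By Proposition~\ref{p:affine}, on every component where $A_\lambda$ acts nonabelianly without interior global fixed points the unique interior fixed point of $a$ lands in $N$; so only finitely many components $K$ contain a point of $N$, and these will be the candidate intervals $I_1,\dots,I_n$.

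On a component $K$ containing a point $p\in N$, the restricted action of $A_\lambda$ cannot be abelian (an abelian group acting on an interval without global fixed points is, by Hölder's theorem, semiconjugate to a group of translations and has no hyperbolic fixed points), so Theorem~\ref{t:BMNR} conjugates $\rho(A_\lambda)|_K$ to the standard affine action, with $a\mapsto(x\mapsto \lambda x)$, $b\mapsto(x\mapsto x+1)$, and a single interior fixed point $p$ of $\rho(a)$. Since $a_\pm$ commute with $a$, they preserve $\Fix(\rho(a))$ and the value of $\rho(a)'$ along it, hence permute the finite set $N\cap K$; being orientation-preserving homeomorphisms of $K$, they fix it pointwise, so $a_\pm(p)=p$, and then they also fix the two points of $\Fix(\rho(a))$ nearest $p$, which are the endpoints of the affine piece. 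Those endpoints are therefore in $\Fix(\rho(G_\lambda))$, forcing the affine piece to be all of $K$. The heart of the argument is then to kill one of the two broken generators on $K$, using the relations coming from disjoint supports in the standard action,
\[
[a_+,\,b^{-m}a_-b^{m}]=1,\qquad [a_-,\,b^{m}a_+b^{-m}]=1\qquad(m\ge 0),
\]
which hold in $G_\lambda$ and hence after applying $\rho$. Writing $\mu_\pm=\rho(a_\pm)'(p)$, so that $\mu_+\mu_-=\lambda$, if $\mu_-\neq 1$ then each conjugate $\rho(b)^{-m}\rho(a_-)\rho(b)^{m}$ has a hyperbolic fixed point at $b^{-m}(p)$; commutation with $\rho(a_+)$ together with order-preservation forces $\rho(a_+)$ to fix every $b^{-m}(p)$, and equivariance under $\rho(a)$ spreads these fixed points to a dense subset of the half-component $(e_1,p)$, whence $\rho(a_+)=\mathrm{id}$ there and $\mu_+=1$. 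Symmetrically $\mu_+\neq1$ forces $\rho(a_-)=\mathrm{id}$ on $(p,e_2)$. As $\mu_+\mu_-=\lambda\neq1$, exactly one of $\mu_\pm$ equals $1$; I would then promote this one-sided triviality to triviality on all of $K$, so that $\rho|_K$ factors through $\rho_+$ or $\rho_-$ and is conjugate to a canonical affine action. This gives conclusions (1) and (2), and finiteness of the $I_i$ follows since distinct $I_i$ carry distinct centers $p\in N$.

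It remains to treat the complement. On a component $K$ with no hyperbolic fixed point of $\rho(a)$, Proposition~\ref{p:affine} forbids $A_\lambda$ from acting nonabelianly without interior global fixed points; decomposing $K$ by $\Fix(\rho(A_\lambda))$ and applying Theorem~\ref{t:BMNR} on each piece shows $\rho(A_\lambda)|_K$ is abelian, and I would then upgrade this to abelianity of $\rho(G_\lambda)|_K$, yielding (3). Granting (1)--(3), the final ``no faithful action'' statement is immediate: the commutator $[b,a_+ba_+^{-1}]$ is nontrivial in $G_\lambda$ but has compact support in the standard action, so it lies in $\ker\rho_\pm$ and therefore acts trivially on each $I_i$; being a commutator it also acts trivially on the abelian complement; hence it lies in $\ker\rho$ and $\rho$ is not faithful (recovering and strengthening Theorem~\ref{t:main1Hyp}).

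The hardest step is precisely where $C^1$ regularity is genuinely in play. The commutation-plus-density argument rigorously kills a broken generator on \emph{one} side of $p$ (triviality on a set is preserved by the merely topological conjugacy of Theorem~\ref{t:BMNR}), but promoting it to the whole component, and proving abelianity on the non-hyperbolic components, cannot rely on linearizing $a$: in class $C^1$ there is no Sternberg/Kopell linearization, so in the chart where $\rho(a)$ is an exact homothety the broken generators are only homeomorphisms. I expect that closing these gaps requires working in the original $C^1$ chart and exploiting the fine control of germs at a hyperbolic fixed point established for $C^1$ abelian-by-cyclic actions in \cite{hyperbolic}, rather than any naive centralizer computation; this interface between the combinatorial relations of $G_\lambda$ and the $C^1$ rigidity of \cite{hyperbolic} is where the real work lies.
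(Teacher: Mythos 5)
Your reduction to invariant components, the finiteness argument via the fixed points of $\rho(a)$ with derivative $\lambda$, and the use of the disjoint-support relations to kill one broken generator on the half of the component to one side of the hyperbolic fixed point all match the paper's proof (Lemmas \ref{FinitelyMany}, \ref{CommutingHyp}, \ref{l:bbs3}, \ref{l:bbs4}, and the two Claims in the proof of Theorem \ref{t:key}). But the step you yourself flag --- promoting the one-sided triviality of $\rho(a_+)$ to triviality on the whole component --- is a genuine gap, and your guess about how to close it (linearization-free ``germ control'' at the hyperbolic point imported from \cite{hyperbolic}) points in the wrong direction: the paper uses no such thing. The missing idea is Proposition \ref{p:linked_hyper}, the $C^1$ Sacksteder--Deroin--Kleptsyn--Navas theorem on linked pairs of successive fixed points, already deployed in the proof of Theorem \ref{t:H(Z)}. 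Concretely, normalize so that $h=\rho(a_-)$ has derivative $\lambda>1$ at $s_0$. Since $h$ commutes with $\rho(a)$, any second fixed point of $h$ in the component would be pushed by the iterates $\rho(a)^{-n}$ onto a sequence of fixed points of $h$ accumulating at $s_0$, forcing $h'(s_0)=1$; hence $s_0$ is the \emph{unique} fixed point of $h$ in the open component (Lemma \ref{l:bbs2}). If now $k=\rho(a_+)$ were nontrivial on $[s_0,1]$, it would have a pair of successive fixed points $s<t$ there, and for a sufficiently small positive $r\in\Z[\lambda,\lambda^{-1}]$ the pair $g_r(s),g_r(t)$ is linked with $s,t$ for the two elements $k$ and $g_rkg_r^{-1}$ (Lemma \ref{l:bbs5}), both of which commute with $h$. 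Proposition \ref{p:linked_hyper} then yields an element of $\langle k,\,g_rkg_r^{-1}\rangle$ with a hyperbolic fixed point in $(s_0,1)$; this element commutes with $h$, so Lemma \ref{CommutingHyp} forces $h$ to fix that point, contradicting uniqueness. Hence $k$ is the identity on the whole component, which is exactly what you need for conclusion (2).

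Two smaller points. Your assertion that ``exactly one of $\mu_\pm$ equals $1$'' is not a formal consequence of $\mu_+\mu_-=\lambda$; it does follow from your one-sided argument (if $\mu_-\neq1$ then $k$ is the identity on the left half, and $C^1$ regularity gives $\mu_+=1$), but this should be said. And the upgrade from ``$\rho(A_\lambda)$ abelian on a component'' to ``$\rho(G_\lambda)$ abelian there'' should be routed through \eqref{item:abelian} of Lemma \ref{lem:abel_aff}: a torsion-free abelian quotient of $A_\lambda$ kills $b$, and $a_+$, $a_-$ commute; this is precisely Lemma \ref{l:equiv1}, applied to the restricted representation.
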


\begin{rem}
Relying on \cite[Thm.~1.10]{hyperbolic} (\textit{cf.} also \cite{GL}), we could provide a similar statement for $C^1$ actions of $G_\lambda$ on the circle $\T$.
Indeed, every nonabelian action of $A_\lambda$ has a global \emr{finite orbit} so, \emr{up to passing to a finite-index subgroup}, every nonabelian action of $G_\lambda$ reduces to an action on the interval.
\end{rem}

\begin{rem}\label{r:C1+}
The proof of Theorem~\ref{t:key} would be much simpler for representations $\rho:G_\lambda\to \Diff^{1+\alpha}_+([0,1])$ of the group $G_\lambda$ into the group of $C^1$ diffeomorphisms with $\alpha$-H\"older continuous derivative. Indeed, it is classical that any $C^1$ element commuting with a $C^{1+\alpha}$ hyperbolic contraction  of an interval 
lies in a one parameter flow (\textit{cf.}~Theorem~\ref{t:Szekeres}: when the fixed point of the contraction is hyperbolic, Szekeres theorem requires only $C^{1+\alpha}$ regularity).

Let us sketch the proof under the assumption of $C^{1+\alpha}$ regularity. Assume that the image $\rho(G_\lambda)$ is nonabelian. Then the image $\rho(A_\lambda)$ is also nonabelian (\textit{cf.}~Lemma \ref{l:equiv1}). From Theorem~\ref{t:BMNR} and Proposition~\ref{p:affine} we deduce that the element $\rho(a)$ behaves as the corresponding scalar multiplication in restriction to some interval $I\subset [0,1]$ and has a hyperbolic fixed point $s\in I$. 
As the elements $\rho(a_\pm)$ commute with $\rho(a)$, we deduce from Szekeres theorem that in restriction to the interval $I$, also these elements behave like scalar multiplications (as the one parameter flow containing a scalar multiplication is exactly the one parameter flow of all scalar multiplications). This implies that the group $\rho(G_\lambda)$ acts like an affine group in restriction to the interval $I$.
\end{rem}

The proof of Theorem~\ref{t:key} will occupy the rest of the section.

\subsection{Elementary ingredients}

When working with $C^1$ actions on the interval, hyperbolic fixed points do not often give rigidity (one usually needs $C^{1+\alpha}$ regularity, \textit{cf.}~Remark \ref{r:C1+}). Indeed there are only a few dynamical tools that work in $C^1$ regularity. For this reason our proof relies mainly on very elementary arguments.
A first tool is the following:

\begin{lem}\label{FinitelyMany}
Let $\alpha\in\Diff_+^1([0,1])$ be a diffeomorphism. For any $\delta>1$, there are only finitely many points $s\in [0,1]$
that are fixed by $\alpha$ and such that $\alpha'(s)>\delta$.
\end{lem}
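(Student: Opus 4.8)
The plan is to argue by contradiction using compactness together with the mean value theorem, the key point being that the $C^1$ hypothesis makes the derivative a \emph{continuous} function. Suppose that there were infinitely many points $s\in[0,1]$ fixed by $\alpha$ with $\alpha'(s)>\delta$. Since $[0,1]$ is compact, I can extract a sequence $(s_n)$ of \emph{distinct} such points converging to some limit $s_*\in[0,1]$. By continuity of $\alpha$, the limit $s_*$ is itself a fixed point; and by continuity of $\alpha'$, passing to the limit in the inequalities $\alpha'(s_n)>\delta$ gives $\alpha'(s_*)\ge\delta>1$.

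The heart of the argument is then to produce nearby points where the derivative equals $1$, contradicting the strict inequality $\alpha'(s_*)>1$. Discarding at most one index so that $s_n\neq s_*$ for all $n$, I apply the mean value theorem to $\alpha$ on the interval with endpoints $s_n$ and $s_*$: since both are fixed points, one has
\[
\alpha'(\xi_n)=\frac{\alpha(s_n)-\alpha(s_*)}{s_n-s_*}=\frac{s_n-s_*}{s_n-s_*}=1
\]
for some $\xi_n$ strictly between $s_n$ and $s_*$. As $s_n\to s_*$, the squeezed points satisfy $\xi_n\to s_*$ as well, so continuity of $\alpha'$ yields $\alpha'(s_*)=\lim_n\alpha'(\xi_n)=1$. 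This contradicts $\alpha'(s_*)\ge\delta>1$, and the lemma follows.

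I expect no genuine obstacle here: the statement is elementary and the only ingredient beyond compactness is the continuity of $\alpha'$, which is exactly the $C^1$ assumption. The one point requiring a small amount of care is ensuring the extracted subsequence consists of points \emph{distinct} from the limit $s_*$, so that the difference quotient in the mean value theorem is well defined; this is automatic since among infinitely many distinct fixed points at most one can coincide with $s_*$. It is worth emphasizing that the conclusion is genuinely about fixed points with derivative bounded away from $1$ from \emph{above} by $\delta$, and that the same argument applied to $\alpha^{-1}$ would control fixed points with $\alpha'(s)<\delta^{-1}<1$; thus hyperbolic fixed points of a fixed contraction/expansion rate accumulate only where the derivative degenerates to $1$, which is precisely forbidden here.
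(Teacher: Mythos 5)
Your proof is correct and follows essentially the same route as the paper: extract an accumulation point $s_*$ of the fixed points, note $\alpha'(s_*)\ge\delta$ by continuity of the derivative, and derive the contradiction $\alpha'(s_*)=1$ from the accumulating fixed points. The only cosmetic difference is that you reach $\alpha'(s_*)=1$ via the mean value theorem and continuity of $\alpha'$, whereas the paper observes directly that the difference quotients $\frac{\alpha(s_n)-\alpha(s_*)}{s_n-s_*}$ all equal $1$, so the definition of the derivative already gives $\alpha'(s_*)=1$.
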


\begin{proof}
Suppose $\alpha$ has infinitely many fixed points $\{s_n\mid n\in \N\}$ in $[0,1]$, such that $\alpha'(s_n)>\delta$ for any $n\in\N$.
Let $s_*\in [0,1]$ be an accumulation point of the sequence $\{s_n\}$. By continuity of the derivative, we must have
$\alpha'(s_*)\ge\delta$. On the other hand, let $\{s_{n_k}\}$ be a subsequence converging to $s_*$; by the very definition of the derivative
we must have $\alpha'(s_*)=1$. This is a contradiction.
\end{proof}

Then we state and prove a second crucial elementary fact.

\begin{lem}\label{CommutingHyp}
Let $\alpha,\beta \in \Diff_+^1([0,1])$ be two commuting $C^1$ diffeomorphisms.
Let $s\in [0,1]$ be a hyperbolic fixed point of $\alpha$.
Then $\beta$ fixes $s$.
\end{lem}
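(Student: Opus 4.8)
The plan is to show that $\beta$ must permute a finite set of fixed points of $\alpha$ containing $s$, and then to upgrade this periodicity to an honest fixation using the fact that $\beta$ preserves orientation. First I would reduce to the attracting case: since $s$ is hyperbolic, $\alpha'(s)\neq 1$, and replacing $\alpha$ by $\alpha^{-1}$ if necessary (this still commutes with $\beta$, still fixes $s$, and satisfies $(\alpha^{-1})'(s)=1/\alpha'(s)\neq 1$) we may assume $c:=\alpha'(s)<1$.

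The key observation is that the whole $\beta$-orbit of $s$ consists of fixed points of $\alpha$ with the \emph{same} derivative. Indeed, from $\alpha\beta=\beta\alpha$ one gets $\alpha\beta^n=\beta^n\alpha$ for every $n\in\Z$, so
\[
\alpha(\beta^n(s))=\beta^n(\alpha(s))=\beta^n(s),
\]
showing each $\beta^n(s)\in\Fix(\alpha)$. Differentiating the identity $\alpha\circ\beta^n=\beta^n\circ\alpha$ at the point $s$ and using that $(\beta^n)'(s)\neq 0$ yields $\alpha'(\beta^n(s))=\alpha'(s)=c$ for all $n$. I would then invoke Lemma~\ref{FinitelyMany} applied to $\alpha^{-1}$: choosing $\delta$ with $1<\delta<1/c$, the fixed points $p$ of $\alpha$ satisfying $(\alpha^{-1})'(p)=1/\alpha'(p)>\delta$ form a finite set, and since $1/\alpha'(\beta^n(s))=1/c>\delta$, every point $\beta^n(s)$ lies in this finite set. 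Hence the $\beta$-orbit of $s$ is finite, so $\beta^k(s)=s$ for some $k\geq 1$, i.e.\ $s$ is periodic for $\beta$.

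Finally, since $\beta$ is an orientation-preserving homeomorphism of $[0,1]$, it is strictly increasing, and a periodic point must be fixed: if $\beta(s)>s$ then $\beta^n(s)$ is strictly increasing in $n$, contradicting $\beta^k(s)=s$, and symmetrically if $\beta(s)<s$. Therefore $\beta(s)=s$, as claimed. The only delicate point in this argument is the uniform control of $\alpha'$ along the orbit — without it the orbit could a priori be infinite and the finiteness lemma would not apply. This control is precisely what the differentiation of the commutation relation provides, so that step is the crux of the proof, while the remaining reductions are elementary.
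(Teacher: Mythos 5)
Your proof is correct and follows essentially the same route as the paper's: show that the $\beta$-orbit of $s$ consists of fixed points of $\alpha$ all having the same derivative $\alpha'(s)\neq 1$ (via the chain rule applied to the commutation relation), then conclude from Lemma~\ref{FinitelyMany} that this orbit must be finite, hence trivial. You are in fact slightly more careful than the paper in two spots it leaves implicit — the reduction to $\alpha^{-1}$ so that the finiteness lemma applies as literally stated, and the observation that a periodic point of an orientation-preserving homeomorphism of $[0,1]$ is fixed.
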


\begin{proof}
Let us assume by way of contradiction that $\beta$ does not fix $s$.
For each $n\in \Z$ we have 
\[\alpha(\beta^n(s))=\beta^n(\alpha(s))=\beta^n(s)\]
This means that $\alpha$ fixes each point in the set $S=\{\beta^n(s)\mid n\in \Z\}$.

\begin{claim}
Each $t\in S$ is a hyperbolic fixed point of $\alpha$ and $\alpha'(t)=\alpha'(s)$ for all $t\in S$.
\end{claim}

\begin{proof}[Proof of Claim]
Let $\lambda_n$ be the formal word $\beta^{-n} \alpha \beta^{n}$. 
Using the chain rule, we find
\[\lambda_n'(s)=\alpha'(\beta^n(s)).\]
However, since $\alpha$ and $\beta$ commute, indeed $\lambda_n=\alpha$ and hence $\lambda_n'(s)=\alpha'(s)$.
It follows that $\alpha'(s)=\alpha'(\beta^n(s))$ for each $n\in \mathbb{N}$.
\end{proof}

Since the set $S$ is infinite, the claim is in contradiction with Lemma~\ref{FinitelyMany}.
\end{proof}

\subsection{A particular case: no global fixed points for $A_\lambda$}

Before dealing with a general statement as in Theorem~\ref{t:key}, we study actions on the interval without global fixed points. 
For the statement, recall that we denote by $A_\lambda\subset G_\lambda$ the subgroup generated by $a$ and $b$.

\begin{prop}\label{p:noglobal}
Let $\lambda>1$ be \emr{Galois hyperbolic} and  $\rho:G_\lambda\to \Diff_+^1([0,1])$ be a morphism satisfying the following:
\begin{enumerate}[(1)]
\item the image $\rho(A_\lambda)$ is nonabelian,
\item the action of $\rho(A_\lambda)$ has no global fixed point in $(0,1)$.
\end{enumerate}
Then $\rho(G_\lambda)$ is topological conjugate to one of the two canonical representations $\rho_\pm:G_\lambda\to A_\lambda$.
\end{prop}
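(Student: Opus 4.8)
The goal is Proposition \ref{p:noglobal}: under the hypotheses that $\rho(A_\lambda)$ is nonabelian and has no global fixed point in $(0,1)$, the whole group $\rho(G_\lambda)$ is topologically conjugate to one of the canonical representations $\rho_\pm$. Let me think carefully about what these hypotheses give me and what I need to produce.

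First, the hypotheses on $A_\lambda$ are exactly those of Theorem \ref{t:BMNR}. Since $\lambda$ is Galois hyperbolic and $\rho(A_\lambda)$ is nonabelian with no global interior fixed point, Theorem \ref{t:BMNR} tells me $\rho(A_\lambda)$ is topologically conjugate to the standard affine action. So after conjugating by some homeomorphism, I may assume $\rho(a)(x) = \lambda x$ and $\rho(b)(x) = x+1$ on $\R = (0,1)$ identified with the line (with fixed points at the endpoints). In particular $\rho(a)$ has a single interior fixed point $s = \phi(0)$ which is hyperbolic with derivative $\lambda$ (this is also exactly Proposition \ref{p:affine}).

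The real content is understanding where $\rho(a_+)$ and $\rho(a_-)$ go. Recall $a = a_+ a_-$ (or $a_- = a a_+^{-1}$), and crucially $a_+$ and $a_-$ each commute with $a$ — indeed $a_+$ agrees with $a$ to the right of $0$ and is the identity to the left, and similarly for $a_-$, so both commute with the full homothety $a$. This is where Lemma \ref{CommutingHyp} enters.

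**The key steps.**

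The plan is to exploit the hyperbolic fixed point $s$ of $\rho(a)$. Since $\rho(a_+)$ and $\rho(a_-)$ commute with $\rho(a)$, and $s$ is a hyperbolic fixed point of $\rho(a)$, Lemma \ref{CommutingHyp} forces both $\rho(a_+)$ and $\rho(a_-)$ to fix $s$. This is the decisive structural fact. Now I want to understand the action on each side of $s$ separately. Identifying the interval with the line so that $s \leftrightarrow 0$, I examine the restrictions to the two half-lines $(s, 1) \leftrightarrow (0, +\infty)$ and $(0, s) \leftrightarrow (-\infty, 0)$.

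On the right half-line, the germ homomorphism $\rho_+$ comes into play: I want to show that the restriction of $\rho(G_\lambda)$ to $(s,1)$ is conjugate to the standard affine action $A_\lambda$, reproducing the canonical representation $\rho_+$. I expect to do this by arguing that, on the right of $s$, the element $\rho(a_+)$ must behave like the homothety $a$ while $\rho(a_-)$ is forced to behave trivially — this matches the defining property $\rho_+(a_+) = \rho(a)$, $\rho_+(a_-) = \mathrm{id}$. The point is that $\rho(a) = \rho(a_+)\rho(a_-)$ restricts to the affine homothety on $(s,1)$ (from Theorem \ref{t:BMNR}), so I need to distribute this factorization correctly between $a_+$ and $a_-$. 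By symmetry the left half-line $(0,s)$ gives the other canonical action.

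The subtle part is ruling out that the two half-actions can *both* be nontrivial in an incompatible way, which would contradict the conclusion that the global action is conjugate to a single $\rho_+$ or $\rho_-$. I suspect the resolution is that one of the two half-line restrictions is actually trivial (or that the relevant generator acts trivially on one side), pinning down exactly one of $\rho_\pm$, and that this is exactly forced by the nonabelianness hypothesis combined with the commuting-fixed-point analysis.

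**The main obstacle.**

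The hard part will be the careful bookkeeping on the two half-lines: translating the commutation relations and the known affine behavior of $\rho(a)$ into the precise statement that $\rho(a_+)$ (respectively $\rho(a_-)$) restricts to a homothety on one side and to the identity on the other. Lemma \ref{CommutingHyp} gives me that $s$ is fixed by everything in sight, but I still need to control the *derivatives* and the full dynamics on each half-line, presumably by re-applying Theorem \ref{t:BMNR} or the affine-conjugacy picture to the subgroup $A_\lambda$ restricted to each component $(0,s)$ and $(s,1)$, and then showing the broken generators $a_\pm$ are forced into the model. The delicate issue is ensuring the conjugacy on the two sides glues to a single global homeomorphism realizing one canonical $\rho_\pm$, rather than an exotic hybrid; I expect this to be where the structure of $G_\lambda$ (in particular that $a_+ a_- = a$ with $a_\mp$ trivial on opposite sides) does the essential work, matching exactly the two germ homomorphisms $\rho_\pm$.
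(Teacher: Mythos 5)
Your setup is correct as far as it goes: Theorem~\ref{t:BMNR} and Proposition~\ref{p:affine} give the hyperbolic fixed point $s_0$ of $\rho(a)$ with derivative $\lambda$, and Lemma~\ref{CommutingHyp} forces $\rho(a_\pm)$ to fix $s_0$. But the proposal then goes astray. The intermediate structure you aim for is not what actually happens: you expect $\rho(a_+)$ to act as the homothety to the right of $s_0$ and trivially to the left (and symmetrically for $\rho(a_-)$), ``matching'' $\rho_+$ on one side and $\rho_-$ on the other. That configuration is the \emph{standard faithful} action of $G_\lambda$, which is precisely what the proposition rules out. The true conclusion is that one of $\rho(a_\pm)$ is the identity on \emph{all} of $[0,1]$ and the other equals $\rho(a)$ everywhere, so both half-lines see the \emph{same} canonical representation. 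You sense the tension (``ruling out that the two half-actions can both be nontrivial'') but offer no mechanism beyond the hope that it is forced by nonabelianness plus the commuting-fixed-point analysis; it is not, since commutation with $\rho(a)$ alone cannot distinguish the faithful broken action from the affine quotients.

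The two arguments that actually do the work are absent from your proposal. Writing $h=\rho(a_-)$ and $k=\rho(a_+)$, the chain rule gives $h'(s_0)\,k'(s_0)=\lambda$, so without loss of generality $h'(s_0)>1$; then $s_0$ is the \emph{only} fixed point of $h$ in $(0,1)$ (any other fixed point could be pushed toward $s_0$ by iterates of $\rho(a)^{-1}$, contradicting hyperbolicity). The paper then kills $k$ on $[0,s_0]$ by noting that for every positive $r\in\Z[\lambda,\lambda^{-1}]$ the conjugate $g_rkg_r^{-1}$ has support disjoint from that of $a_-$ in $G_\lambda$, hence commutes with $h$, hence fixes $s_0$ by Lemma~\ref{CommutingHyp}; therefore $k$ fixes the dense set $\{g_{-r}(s_0)\}$ in $(0,s_0)$. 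It kills $k$ on $[s_0,1]$ by taking a pair of successive fixed points of $k$ and a small translate $g_rkg_r^{-1}$ to form a linked pair, so that Proposition~\ref{p:linked_hyper} (Sacksteder/Deroin--Kleptsyn--Navas) produces an element with a hyperbolic fixed point in $(s_0,1)$ commuting with $h$ --- forcing $h$ to fix a second point, a contradiction. Neither the density-of-$\Z[\lambda,\lambda^{-1}]$ argument nor the linked-pair argument appears in your write-up, and without them there is no route from ``$\rho(a_\pm)$ fix $s_0$'' to the conclusion.
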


In the following, we let $s_0$ denote the  
hyperbolic fixed point of $\rho(a)$ in $(0,1)$, ensured by Theorem~\ref{t:BMNR} and Proposition~\ref{p:affine}
above.
For simplicity of notation, we also write
\[
\rho(a)=f,\quad \rho(b)=g, \quad \rho(a_-)=h,\quad \rho(a_+)=k.
\]

\begin{lem}\label{l:bbs-derivative}
With the notation as above, the elements $h$ and $k$ fix the point $s_0$ and we have
\[
h'(s_0)\cdot k'(s_0)=\lambda.
\]
\end{lem}
\begin{proof}
By Lemma~\ref{CommutingHyp}, the two elements $h,k$ fix the point $s_0$, as they commute with $f$.
Remarking that $hk=f$, applying the chain rule we deduce that 
\begin{align*}
\lambda&=f'(s_0)\\
&=h'(k(s_0))\cdot k'(s_0)\\
&=h'(s_0)\cdot k'(s_0),
\end{align*}
as wanted.
\end{proof}

The previous lemma implies that $s_0$ is a hyperbolic fixed point for at least one among~$h$
 and~$k$. Without loss of generality, we assume $h'(s_0)>1$. The following lemma says
that $h$ behaves like a hyperbolic element on the whole interval $[0,1]$:
\begin{lem}\label{l:bbs2}
With the notation as above, suppose $h'(s_0)>1$. Then $s_0$ is the only point of $(0,1)$ which is fixed by $h$. 
\end{lem}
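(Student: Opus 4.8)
The plan is to show that $h$ has no fixed point in $(0,1)$ other than $s_0$. Suppose for contradiction that $h$ has another fixed point; since $h$ is $C^1$ and $h'(s_0)>1$, the point $s_0$ is an isolated fixed point with $s_0$ a topological repeller for $h$. The strategy will be to exploit the algebraic relation between the generators of $G_\lambda$, namely that the scalar multiplication $f=\rho(a)$ conjugates the translation $g=\rho(b)$, together with the commutation relations among $h=\rho(a_-)$, $k=\rho(a_+)$, and $f=hk$. The key dynamical constraint is that $h'(s_0)>1$ forces $s_0$ to be locally repelling for $h$, and I want to derive that this repelling behaviour must propagate to the entire right (or left) component of $(0,1)\setminus\{s_0\}$.

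First I would recall from the previous lemmas that $h$ and $k$ both fix $s_0$ (Lemma~\ref{CommutingHyp}) and that $h'(s_0)k'(s_0)=\lambda$ (Lemma~\ref{l:bbs-derivative}). The heart of the argument is to use Lemma~\ref{FinitelyMany}: if $h$ had a fixed point $s_1\neq s_0$, I would try to produce infinitely many fixed points of $h$ all with derivative bounded below by some $\delta>1$, contradicting that lemma. The natural way to manufacture such points is via the conjugation action of $f$: since $f$ has $s_0$ as a hyperbolic fixed point with $f'(s_0)=\lambda>1$, the iterates $f^n$ push points away from $s_0$, and because $h$ commutes with $f$ (as $a_-$ commutes with $a$ in $G_\lambda$), any fixed point of $h$ gets carried by $f^{\pm n}$ to another fixed point of $h$. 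Concretely, if $s_1$ is a fixed point of $h$ distinct from $s_0$, then $f^n(s_1)$ is also fixed by $h$ for every $n\in\Z$, giving an infinite family $\{f^n(s_1)\}$ of fixed points.

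The remaining obstacle is to control the derivatives $h'(f^n(s_1))$ and show that they stay above some $\delta>1$, so that Lemma~\ref{FinitelyMany} applies and yields the contradiction. The commutation $fh=hf$ gives, by the chain rule applied to $f^{-n}hf^n=h$ evaluated at $s_1$, the identity $h'(f^n(s_1)) = h'(s_1)$ for all $n$ (exactly the computation in the Claim inside Lemma~\ref{CommutingHyp}, now applied with the roles of $\alpha,\beta$ played by $h,f$). Thus all the points $f^n(s_1)$ are fixed points of $h$ with the \emph{same} derivative $h'(s_1)$. Provided $h'(s_1)>1$, Lemma~\ref{FinitelyMany} is immediately contradicted by the infinitude of $\{f^n(s_1)\}$, finishing the proof.

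The delicate point I expect to wrestle with is the case $h'(s_1)\le 1$: the derivative-propagation argument only gives a contradiction when $h'(s_1)>1$. To handle $h'(s_1)\le 1$ I would instead use the ordering of fixed points and the repelling nature of $s_0$ for $h$: since $h'(s_0)>1$, there is a right-neighbourhood of $s_0$ on which $h(t)>t$, so if one moves rightward the first fixed point $s_1>s_0$ of $h$ must be (topologically) attracting from the left, but applying $f^{-n}$ (which contracts toward $s_0$) to $s_1$ produces fixed points accumulating at $s_0$, and continuity of $h'$ forces $h'(s_1)=h'(s_0)>1$ after all — restoring the previous case. I would spell out this accumulation-and-continuity step carefully, as it is where the hyperbolicity $h'(s_0)>1$ is genuinely used, and it is the natural analogue of the continuity argument already appearing in Lemma~\ref{FinitelyMany}.
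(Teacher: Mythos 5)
Your proof is correct and rests on the same mechanism as the paper's: commutation of $h$ with $f$ sends any extra fixed point $s_1$ of $h$ to the sequence $f^{-n}(s_1)$ of fixed points accumulating at $s_0$, which is incompatible with $h'(s_0)>1$. The paper's version is shorter because the case split on $h'(s_1)$ is unnecessary -- once fixed points of $h$ accumulate at $s_0$, the definition of the derivative already forces $h'(s_0)=1$, so one need never invoke Lemma~\ref{FinitelyMany} or propagate derivatives along the orbit.
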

\begin{proof}
If $h$ had a fixed point $s$ different from $s_0$, since $h$ and $f$ commute, the images $f^{-n}(s)$ would form a sequence
of fixed points for $h$ that converge
to $s_0$. This would imply that the derivative of $h$ at $s_0$ should be equal to $1$. Contradiction.
\end{proof}

Given  \emr{$r\in \Z[\lambda,\lambda^{-1}]$}, we denote by $g_r$ the image by $\rho$ of the translation by $r$ when thinking of $A_\lambda$ 
as an affine group. We have $g_{-r}=g_r^{-1}$. By Theorem~\ref{t:BMNR}   these elements 
are actually topologically conjugate to the corresponding translations.
 
\begin{lem}\label{l:bbs3}
Take \emr{a positive $r\in \Z[\lambda,\lambda^{-1}]$}. The conjugate $g_{r}kg_r^{-1}$ commutes with $h$. 
\end{lem}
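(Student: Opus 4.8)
The key realization is that this is not really a statement about $C^1$ dynamics at all: it is an identity inside the abstract group $G_\lambda$ that is automatically preserved by the homomorphism $\rho$. So the plan is to lift everything back to $G_\lambda$. Let $t_r\in A_\lambda\subset G_\lambda$ denote the translation $x\mapsto x+r$, so that $g_r=\rho(t_r)$, while $k=\rho(a_+)$ and $h=\rho(a_-)$. Since $\rho$ is a homomorphism,
\[
g_r k g_r^{-1}=\rho(t_r a_+ t_r^{-1}),
\]
and $g_rkg_r^{-1}$ commutes with $h=\rho(a_-)$ as soon as $t_r a_+ t_r^{-1}$ commutes with $a_-$ in $G_\lambda$. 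Thus it suffices to prove this last commutation relation, which I would do purely by inspecting supports on $\R$.

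First I would record the supports of the relevant generators. By definition $a_+$ is the identity on $(-\infty,0]$ and equals the homothety $x\mapsto\lambda x$ on $[0,+\infty)$, so its (open) support is $(0,+\infty)$; dually $a_-=aa_+^{-1}$ has open support $(-\infty,0)$. Conjugating by the translation $t_r$ simply shifts supports: the open support of $t_r a_+ t_r^{-1}$ equals $t_r\big((0,+\infty)\big)=(r,+\infty)$. Concretely, a direct computation gives
\[
\big(t_r a_+ t_r^{-1}\big)(x)=
\begin{cases} x & \text{if }x\le r,\\ \lambda x+(1-\lambda)r & \text{if }x\ge r,\end{cases}
\]
which indeed fixes everything to the left of $r$.

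The positivity hypothesis now does all the work: for $r>0$ the two open supports $(r,+\infty)$ and $(-\infty,0)$ are disjoint. Homeomorphisms of $\R$ with disjoint supports commute (each fixes the support of the other pointwise, and each preserves its own support), so $t_r a_+ t_r^{-1}$ and $a_-$ commute in $G_\lambda$, and applying $\rho$ yields $[g_rkg_r^{-1},h]=1$. There is essentially no obstacle here beyond bookkeeping; the only point requiring care is correctly locating the support of the conjugate and observing that it is precisely the condition $r>0$ — and not merely $r\neq 0$ — that separates $(r,+\infty)$ from $(-\infty,0)$. It is worth emphasizing that this lemma uses no regularity of $\rho$ whatsoever: it is the elementary algebraic input that, combined with the hyperbolic rigidity supplied by Theorem~\ref{t:BMNR} and Lemma~\ref{CommutingHyp}, will later force the images of $h$, $k$ and all the $g_rkg_r^{-1}$ to share the fixed point $s_0$ and behave affinely there.
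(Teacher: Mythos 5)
Your proof is correct and follows exactly the paper's own argument: the paper likewise reduces the claim to a relation in $G_\lambda$ itself, verified in the standard action by observing that the support of $t_r a_+ t_r^{-1}$ is $[r,+\infty)$, disjoint from the support $(-\infty,0]$ of $a_-$ when $r>0$. Your write-up just makes the explicit formula for the conjugate and the role of positivity slightly more detailed.
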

\begin{proof}
This is actually a statement about relations of the group $G_\lambda$: 
we prove the relation looking at the standard action of $G_\lambda$ on the real line. The support of $a_+$ is $[0,+\infty)$,
therefore the support of the conjugate of $a_+$ by the translation by $r$ is $[r,+\infty)$, which is disjoint from
$(-\infty,0]$, which is the support of $a_-$.
\end{proof}

\begin{lem}\label{l:bbs4}
With the notation as above, the restriction of $k$ to $[0,s_0]$ is the identity.
\end{lem}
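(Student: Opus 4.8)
We want to show that $k$ restricted to $[0,s_0]$ is the identity. Recall the standing assumptions: $h=\rho(a_-)$, $k=\rho(a_+)$, $hk=f=\rho(a)$, all three commute with $f$ and fix $s_0$, and $h'(s_0)>1$ (so $h$ acts like a hyperbolic repeller at $s_0$, and by Lemma~\ref{l:bbs2} it has no other fixed point in $(0,1)$). The plan is to exploit the \emph{geometry of supports} on $[0,s_0]$ and the commutation relations, pushing everything toward $s_0$ using the contraction $h^{-1}$ (which is a hyperbolic contraction toward $s_0$ on $[0,s_0]$).

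**The main idea.**
First I would observe that $k$ fixes $s_0$ and commutes with $f=hk$; since $k$ commutes with $hk$ and with itself, $k$ commutes with $h$. Now I want to argue that $k$ cannot move any point $t\in[0,s_0)$. The natural strategy is to use Lemma~\ref{l:bbs3}: for positive $r\in\Z[\lambda,\lambda^{-1}]$, the conjugate $g_r k g_r^{-1}$ commutes with $h$. Here $g_r=\rho(g_r)$ is (topologically conjugate to) a translation by $r$, and since $\rho(A_\lambda)$ has no global fixed point in $(0,1)$ and is conjugate to the standard affine action by Theorem~\ref{t:BMNR}, the elements $g_r$ form a one-sided flow with $s_0$ as the unique fixed point, pushing points away from $s_0$ on one side. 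The key point is that conjugating $k$ by $g_r$ moves the ``support'' of $k$ in a controlled way, and since all these conjugates must commute with the single hyperbolic map $h$, I would derive a constraint on where $k$ can be nontrivial.

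**Carrying it out via hyperbolicity.**
More concretely, the cleanest route is the following. Suppose for contradiction that $k$ is not the identity on $[0,s_0]$, so there is a point $t_0\in(0,s_0)$ with $k(t_0)\neq t_0$. Consider the interval $(t_1,t_2)\subset[0,s_0)$ which is the connected component of the support of $k$ containing $t_0$; its endpoints are fixed by $k$. Now apply powers of $h^{-1}$: since $h^{-1}$ contracts $[0,s_0]$ toward $s_0$ and $k$ commutes with $h$, the images $h^{-n}(t_1),h^{-n}(t_2)$ are again endpoints of support components of $k$, and they accumulate at $s_0$. At each such fixed endpoint $p_n$ of $k$, the conjugation argument (commuting with $h$, exactly as in the proof of Lemma~\ref{CommutingHyp}) forces $k'(p_n)$ to take a value independent of $n$, while the shrinking intervals force $k'$ at the accumulation point $s_0$ to be $1$ along the endpoints — I would combine this with $k'(s_0)$ being pinned by $h'(s_0)k'(s_0)=\lambda$ (Lemma~\ref{l:bbs-derivative}) to produce a contradiction with Lemma~\ref{FinitelyMany}, exactly as hyperbolic fixed points were shown to be finite in number.

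**Where the difficulty lies.**
The main obstacle will be making the support argument rigorous in $C^1$ regularity alone, where one has no Szekeres flow to appeal to (that shortcut is only available in $C^{1+\alpha}$, per Remark~\ref{r:C1+}). The delicate step is ensuring that accumulation of fixed points of $k$ at $s_0$ genuinely violates Lemma~\ref{FinitelyMany}: I must guarantee that the derivative $k'$ at infinitely many of these accumulating fixed points stays uniformly bounded away from $1$ (either above or below). The conjugacy-invariance of the derivative along the $h$-orbit is what secures this — if $k'(s_0)\neq 1$ the whole $h$-orbit of any fixed point carries the same derivative, and if $k'(s_0)=1$ then $h'(s_0)=\lambda$ and I instead play the symmetric argument with $h$ and $k$ exchanged. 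The bookkeeping to rule out the degenerate case $k'(s_0)=1$ while $k\not\equiv\mathrm{id}$ on $[0,s_0]$ is the genuinely technical part, and I expect the author's proof to organize it by first establishing that $k$ must fix every point of the $g_r$-orbit structure and then invoking Lemma~\ref{FinitelyMany} one final time.
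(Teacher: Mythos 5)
You have the right ingredients in hand --- Lemma~\ref{l:bbs3} and the fact that everything commuting with $h$ is constrained by the hyperbolic fixed point $s_0$ --- but the argument you actually carry out has a gap, and you stop one step short of the intended (much simpler) conclusion. The paper's proof is a two-line consequence of what you already set up: for each positive $r\in\Z[\lambda,\lambda^{-1}]$ the conjugate $k_r=g_rkg_r^{-1}$ commutes with $h$ (Lemma~\ref{l:bbs3}), and since $s_0$ is a \emph{hyperbolic} fixed point of $h$, Lemma~\ref{CommutingHyp} forces $k_r(s_0)=s_0$, i.e.\ $k$ fixes $g_{-r}(s_0)$. As $r$ ranges over the positive elements of the dense subgroup $\Z[\lambda,\lambda^{-1}]$ and the $g_r$ are topologically conjugate to translations, the points $g_{-r}(s_0)$ are dense in $(0,s_0)$; by continuity $k$ is the identity on $[0,s_0]$. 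No support components, no Lemma~\ref{FinitelyMany}.

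The concrete step that fails in your version is the appeal to Lemma~\ref{FinitelyMany} at the endpoints of a support component of $k$. If $(t_1,t_2)\subset(0,s_0)$ is such a component, the chain-rule identity $k'(h^{-n}(t_i))=k'(t_i)$ only yields a contradiction when $k'(t_i)\neq 1$; but an endpoint of a support component of a $C^1$ diffeomorphism may perfectly well have derivative $1$ (this is exactly the failure of hyperbolicity that plagues $C^1$ arguments throughout the paper), and in that case the accumulating fixed points $h^{-n}(t_i)\to s_0$ tell you only that $k'(s_0)=1$, which is entirely compatible with $h'(s_0)=\lambda$ and Lemma~\ref{l:bbs-derivative}. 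Your proposed fallback of ``exchanging $h$ and $k$'' does not repair this: the roles are not symmetric, since $h$ is the element assumed hyperbolic at $s_0$ and the conclusion to be proved concerns $k$ on $[0,s_0]$. (If you insist on arguing by contradiction from a support component, the workable route is the one the paper uses on the other side of $s_0$ in Lemma~\ref{l:bbs5}: a small $g_r$ produces a linked pair of successive fixed points for $k$ and $g_rkg_r^{-1}$, Proposition~\ref{p:linked_hyper} then yields an element with a hyperbolic fixed point in $(0,s_0)$ commuting with $h$, and Lemmas~\ref{CommutingHyp} and~\ref{l:bbs2} give the contradiction. But the density argument above is shorter and is what the paper does.)
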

\begin{proof}
The element $k_r=g_{r}kg_r^{-1}$ commutes with $h$ by Lemma~\ref{l:bbs3}.
\emr{As $s_0$ is a hyperbolic fixed point for $h$, Lemma~\ref{CommutingHyp} implies that  $k_r$ fixes $s_0$ for each $r>0$.}
In particular, it follows that $k$ fixes $g_{-r}(s_0)$ for every positive $r\in \Z[\lambda,\lambda^{-1}]$. By density of \emr{$\Z[\lambda,\lambda^{-1}]$} in $(0,+\infty)$, we obtain the statement.
\end{proof}

The end of the proof is inspired by \cite{centralizers}: in a centraliser of a hyperbolic element, like $h$, there cannot be elements with hyperbolic fixed points (different from the fixed points of the hyperbolic element), and therefore by 
Proposition~\ref{p:linked_hyper}, there cannot be linked pairs of successive fixed points.

\begin{lem}\label{l:bbs5}
Suppose that $k$ is not the identity and let $s, t\in [s_0,1]$ be a pair of successive fixed points of $k$. 
There exists a \emr{positive $r\in \Z[\lambda,\lambda^{-1}]$} such that the pair $g_r(s),g_r(t)$ together with $s,t$ defines a linked pair of fixed points 
for  $g_rkg_r^{-1}$ and $k$.
\end{lem}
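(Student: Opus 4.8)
The plan is to use the dynamics of the translations $g_r$ to spread out a pair of successive fixed points of $k$ until it becomes linked with one of its translates. Recall that by Lemma~\ref{l:bbs4} the element $k$ is the identity on $[0,s_0]$, so all of its nontrivial dynamics (if $k$ is not the identity) takes place on $(s_0,1]$. Suppose $s,t\in[s_0,1]$ is a pair of successive fixed points of $k$, so that $(s,t)$ is a connected component of $[s_0,1]\setminus\Fix(k)$ with $s_0\le s<t\le 1$. I want to produce a \emph{positive} $r\in\Z[\lambda,\lambda^{-1}]$ so that conjugating $k$ by $g_r$ moves this interval to an interval $(g_r(s),g_r(t))$ which overlaps $(s,t)$ in a linked fashion, i.e.~exactly one of the endpoints $g_r(s),g_r(t)$ lies strictly inside $(s,t)$, or exactly one of $s,t$ lies strictly inside $(g_r(s),g_r(t))$.

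First I would record the behaviour of the translations $g_r$ near the hyperbolic fixed point $s_0$ of $h$. By Theorem~\ref{t:BMNR} the action of $\rho(A_\lambda)$ on the relevant interval is topologically conjugate to the standard affine action, under which $s_0$ corresponds to the fixed point $0$ of the homothety and the $g_r$ correspond to the translations $x\mapsto x+r$. In particular, for $r>0$ the homeomorphism $g_r$ has no fixed point in the open interval to the right of $s_0$ and pushes every point of $(s_0,1]$ strictly to the right: one has $g_r(x)>x$ for all $x\in(s_0,1)$, and as $r$ ranges over the positive elements of the dense subgroup $\Z[\lambda,\lambda^{-1}]$, the displacement $g_r(x)-x$ can be made arbitrarily small (take $r$ small) or large enough that $g_r(x)$ exits any compact subinterval of $(s_0,1)$ (iterate or take $r$ large). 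This monotone, fixed-point-free action on $(s_0,1)$ is exactly what lets me slide the interval $(s,t)$ continuously.

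Next I would run the sliding argument. Consider the function $r\mapsto g_r(s)$ for positive $r\in\Z[\lambda,\lambda^{-1}]$. For very small $r>0$ we have $g_r(s)$ just slightly to the right of $s$, hence $g_r(s)\in(s,t)$ provided $s<t$; thus the left endpoint of the translated interval already sits strictly inside $(s,t)$. Now I increase $r$: since $g_r(s)$ moves monotonically to the right and leaves any compact subset of $(s_0,1)$, for large enough $r$ the point $g_r(s)$ lies beyond $t$, so $g_r(s)\notin(s,t)$. By density of $\Z[\lambda,\lambda^{-1}]$ and continuity I can select a value of $r$ at an intermediate stage where precisely one endpoint of $(g_r(s),g_r(t))$ falls strictly inside $(s,t)$ while the other does not, realizing condition~2) in the definition of a linked pair (the case $\{g_r(s),g_r(t)\}\cap(s,t)$ being a single point); I should also track the symmetric possibility that $(g_r(s),g_r(t))$ swallows exactly one of $s,t$. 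The pair $(s,t)$ is a pair of successive fixed points of $k$, and $(g_r(s),g_r(t))$ is a pair of successive fixed points of the conjugate $g_rkg_r^{-1}$, so the two pairs form a linked pair of fixed points for $g_rkg_r^{-1}$ and $k$, as required.

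The main obstacle I anticipate is ensuring that the translated interval $(g_r(s),g_r(t))$ does not simply \emph{engulf} or perfectly nest $(s,t)$ at every value of $r$, which would fail the linking condition that exactly one endpoint be interior. This is where I must use that $g_r$ is \emph{fixed-point-free} and strictly increasing on $(s_0,1)$: because the endpoints $g_r(s)$ and $g_r(t)$ both move continuously and monotonically to the right as $r$ grows, and because $g_r(s)$ transitions from being inside $(s,t)$ to being outside (past $t$), an intermediate-value argument forces a parameter $r$ at which the linking configuration occurs. A minor subtlety is that $r$ must be chosen in the countable dense set $\Z[\lambda,\lambda^{-1}]$ rather than a continuum, so rather than a literal intermediate-value theorem I would argue that for small positive $r$ the left endpoint is interior and for large $r$ it is exterior, and use density together with the fact that the endpoints $g_r(s),g_r(t)$ can be placed to straddle $t$ (or $s$) to extract a single $r$ producing exactly one interior endpoint.
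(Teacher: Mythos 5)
Your proposal is correct and follows essentially the same route as the paper: since $g_r$ moves every point of $(s_0,1)$ strictly to the right and is conjugate to the translation by $r$, one picks a small positive $r\in\Z[\lambda,\lambda^{-1}]$ with $s<g_r(s)<t\le g_r(t)$, so that exactly one endpoint of the translated pair lies in $(s,t)$. The intermediate-value "sliding" discussion is superfluous — the small-$r$ configuration already realizes the linking, because both endpoints move right and hence $(g_r(s),g_r(t))$ can never engulf $(s,t)$.
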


\begin{proof}
For any \emr{positive $r\in \Z[\lambda,\lambda^{-1}]$}, the points $g_r(s),g_r(t)$  define a pair of successive fixed points for the conjugate $g_rkg_r^{-1}$.
An element $g_r$, $r>0$, moves every point in $(s_0,1)$ to the right, and using the fact that $g_r$ is topologically conjugate to the translation
by $r$, we can choose $r>0$ sufficiently small such that
\[
s<g_r(s)<t\le g_r(t)
\]
(with equality $t=g_r(t)$ if and only if $t=1$).
\end{proof}

Suppose that $k$ is not the identity. Then, from Lemma~\ref{l:bbs5} and Proposition~\ref{p:linked_hyper},
we realize that the subgroup $\langle g_rkg_r^{-1},k\rangle$ contains an element $\gamma$ 
with a hyperbolic fixed point $p$ in $(s_0,1)$. 
Since $g_rkg_r^{-1},k$ commute with $h$, it follows that $\gamma$ commutes with $h$. 
So by Lemma \ref{CommutingHyp} $h$ must fix the point $p$.
This contradicts the conclusion of Lemma~\ref{l:bbs2}.

Therefore we must have that $k=\rho(a_+)$ is the identity, and so $\rho(a_-)=\rho(a)$. Thus the representation $\rho:G_\lambda\to \Diff_+^1([0,1])$
is topologically conjugate to the canonical representation $\rho_-:G_\lambda\to A_\lambda$.
This finishes the proof of Proposition \ref{p:noglobal}.

\subsection{Equivalent properties}

Now we consider almost the same statement as in Proposition \ref{p:noglobal}, but we only make assumptions on the global dynamics of $G_\lambda$, rather than on the one of $A_\lambda$.

\begin{prop}\label{p:noglobal2}
Let $\lambda>1$ be \emr{Galois hyperbolic} and  $\rho:G_\lambda\to \Diff_+^1([0,1])$ be a morphism satisfying the following:
\begin{enumerate}[(1)]
\item the image $\rho(G_\lambda)$ is nonabelian,
\item the action of $\rho(G_\lambda)$ has no global fixed point in $(0,1)$.
\end{enumerate}
Then $\rho(G_\lambda)$ is topological conjugate to one of the two canonical representations $\rho_\pm:G_\lambda\to A_\lambda$.
\end{prop}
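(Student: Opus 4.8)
The plan is to deduce Proposition~\ref{p:noglobal2} from Proposition~\ref{p:noglobal} by showing that, under the present hypotheses on $\rho(G_\lambda)$, the subgroup $A_\lambda$ already satisfies the two assumptions of Proposition~\ref{p:noglobal}: that $\rho(A_\lambda)$ is nonabelian, and that it has no global fixed point in $(0,1)$. Once both are established, Proposition~\ref{p:noglobal} gives at once that $\rho(G_\lambda)$ is topologically conjugate to one of $\rho_\pm$.

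First I would record the equivalence ``$\rho(G_\lambda)$ abelian $\iff$ $\rho(A_\lambda)$ abelian''. One implication is trivial since $A_\lambda\subset G_\lambda$. For the other, any abelian subgroup of $\Diff^1_+([0,1])$ is torsion free, so if $\rho(A_\lambda)$ is abelian then the restriction of $\rho$ to $A_\lambda$ factors through a torsion-free abelian quotient of $A_\lambda$; by Lemma~\ref{lem:abel_aff}\eqref{item:abelian} the generator $b$ maps into the torsion part, whence $\rho(b)=\mathrm{id}$. Since in $G_\lambda$ one has $a=a_+a_-=a_-a_+$ with $a_+,a_-$ of disjoint support, so that $[a_+,a_-]=1$ and $[a,a_\pm]=1$, the image $\rho(G_\lambda)=\langle\rho(a_+),\rho(a_-)\rangle$ is generated by two commuting elements, hence abelian. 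In particular hypothesis (1) of Proposition~\ref{p:noglobal2} guarantees that $\rho(A_\lambda)$ is nonabelian.

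The crux is to rule out interior global fixed points of $\rho(A_\lambda)$. Arguing by contradiction, suppose $\Fix(\rho(A_\lambda))\cap(0,1)\neq\emptyset$. Each component $J=(p_1,p_2)$ of the complement is preserved by $\rho(A_\lambda)$ (its endpoints are fixed) and carries an action of $A_\lambda$ without interior global fixed point, so Theorem~\ref{t:BMNR} applies on $\bar J$ (here I use that $\lambda$ is Galois hyperbolic): the restricted action is either abelian or topologically conjugate to the standard affine action. If every component were abelian, then $\rho(a)$ and $\rho(b)$ would commute on each $\bar J$ and on $\Fix(\rho(A_\lambda))$, hence everywhere, contradicting the nonabelianity just proved; so some component $J$ is of affine type. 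Restricting to $\bar J$ and invoking Proposition~\ref{p:affine}, the map $f=\rho(a)$ has a \emph{genuine} hyperbolic fixed point $s\in J$ (derivative exactly $\lambda$), not merely a topological one. By Lemma~\ref{CommutingHyp} the commuting elements $\rho(a_\pm)$ fix $s$, and the chain rule gives $\rho(a_-)'(s)\cdot\rho(a_+)'(s)=\lambda>1$, so after possibly exchanging $a_+$ and $a_-$ we may assume $s$ is a hyperbolic fixed point of $h=\rho(a_-)$. Now the disjoint-support relation of Lemma~\ref{l:bbs3} holds in $G_\lambda$ irrespective of the global dynamics, and on $\bar J$ the translations $g_r$, $r\in\Z[\lambda,\lambda^{-1}]$, are conjugate to genuine translations; reproducing Lemmas~\ref{l:bbs2}, \ref{l:bbs4}, \ref{l:bbs5} together with Proposition~\ref{p:linked_hyper} inside $J$ forces $\rho(a_+)$ to be the identity on all of $\bar J$. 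Then both $\rho(a_+)$ and $\rho(a_-)=f|_{\bar J}$ fix the endpoints $p_1,p_2$; being already fixed by $\rho(a)$ and $\rho(b)$, these are global fixed points of $\rho(G_\lambda)$ in $(0,1)$, contradicting hypothesis (2). Hence $\Fix(\rho(A_\lambda))\cap(0,1)=\emptyset$ and Proposition~\ref{p:noglobal} applies.

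The main obstacle is exactly this last localization. Because the strong rigidity of Theorem~\ref{t:BMNR} presupposes the absence of interior global fixed points, it cannot be applied to $A_\lambda$ globally; one must instead work component by component, upgrade the topological conjugacy on an affine component to an \emph{honest} hyperbolic fixed point via Proposition~\ref{p:affine}, and re-run the $C^1$ linked-pair machinery of Section~\ref{s:Aff} on that single component while controlling the breakpoint generator $a_+$ up to the endpoints. The delicate point is that $\rho(a_+)$ need not a priori preserve $\bar J$, so the conclusion $\rho(a_+)|_{\bar J}=\mathrm{id}$ must be obtained from the density and commutation arguments directly, rather than by a clean restriction of Proposition~\ref{p:noglobal} to $\bar J$.
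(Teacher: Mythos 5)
Your overall reduction is exactly the paper's: establish the two equivalences ($\rho(G_\lambda)$ nonabelian iff $\rho(A_\lambda)$ nonabelian; no interior global fixed point for $\rho(G_\lambda)$ iff none for $\rho(A_\lambda)$) and then invoke Proposition~\ref{p:noglobal}. Your first equivalence is correct and matches Lemma~\ref{l:equiv1}. The gap is in the second equivalence, precisely at the ``delicate point'' you name but do not resolve. Re-running Lemmas~\ref{l:bbs2}, \ref{l:bbs4}, \ref{l:bbs5} inside a proper affine component $J=(p_1,p_2)$ does not go through on both sides of $s_0$. The density argument of Lemma~\ref{l:bbs4} only controls $k=\rho(a_+)$ on one side: the relation ``$b^r a_+ b^{-r}$ commutes with $a_-$'' holds only for $r\ge 0$, so it shows that $k$ fixes $g_{-r}(s_0)$ for positive $r$, i.e.\ a dense subset of $(p_1,s_0)$ only. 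On the other side one must use linked pairs, and there the argument breaks exactly when the component $(s,t)$ of the support of $k$ with $s\in[s_0,p_2)$ has $t>p_2$ --- the case where $k$ fails to preserve $\bar J$: if $g_r(t)<t$ the pairs $\{s,t\}$ and $\{g_r(s),g_r(t)\}$ are not linked at all, and even when they are linked, Proposition~\ref{p:linked_hyper} only yields a hyperbolic fixed point somewhere in $(g_r(s),t)$, which may lie in $[p_2,t)$, outside $J$, where the statement ``$s_0$ is the only fixed point of $h=\rho(a_-)$ in $J$'' produces no contradiction. Asserting that the conclusion ``must be obtained from the density and commutation arguments directly'' is not a proof of this step.

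The paper's proof of this equivalence (Lemma~\ref{l:equiv2}) avoids linked pairs entirely and is much lighter: normalizing so that $\rho(a_-)'(s_0)>1$, let $s_-$ (resp.\ $s_+$) be the first fixed point of $\rho(a_-)$ to the left (resp.\ right) of $s_0$. If $s_-\in(p_1,s_0)$, then $\rho(a)^{-n}(s_-)$ is a sequence of fixed points of $\rho(a_-)$ accumulating at $s_0$, contradicting $\rho(a_-)'(s_0)\neq 1$; if $s_-<p_1$, then $\rho(a_-)^{-n}(p_1)$ is a sequence of fixed points of $\rho(a)$ accumulating at $s_0$, contradicting $\rho(a)'(s_0)=\lambda$. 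Hence $s_-=p_1$, symmetrically $s_+=p_2$, so both endpoints are fixed by all generators and must be $0$ and $1$. You can repair your argument by replacing the linked-pair step with this two-sided accumulation trick (the same mechanism as Lemma~\ref{CommutingHyp}, applied to the commuting pair $\rho(a),\rho(a_-)$ rather than to $\rho(a_+)$ and its translates); note also that it suffices to show the endpoints are fixed by $\rho(a_\pm)$ --- proving $\rho(a_+)\vert_{\bar J}=\mathrm{id}$ is more than you need for the contradiction.
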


The proof follows directly from the following two lemmas \emr{and from Proposition~\ref{p:noglobal}}.

\begin{lem}\label{l:equiv1}
\emr{Let $\lambda>1$ be algebraic} and $\rho:G_\lambda\to \Diff_+^1([0,1])$ be a morphism. Then the following properties are equivalent:
\begin{enumerate}[(1)]
\item the image $\rho(G_\lambda)$ is nonabelian,
\item the image $\rho(A_\lambda)$ is nonabelian.
\end{enumerate}
\end{lem}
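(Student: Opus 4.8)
The two implications are strongly asymmetric. The direction $(2)\Rightarrow(1)$ is immediate: since $A_\lambda$ is a subgroup of $G_\lambda$, if already the image of the subgroup is nonabelian, then so is the image of the whole group. Thus the content lies entirely in $(1)\Rightarrow(2)$, which I would prove in contrapositive form: assuming $\rho(A_\lambda)$ abelian, I would show $\rho(G_\lambda)$ abelian. The recurring external input is that $\rho(G_\lambda)\subset\Diff^1_+([0,1])$ is torsion-free, being a group of orientation-preserving homeomorphisms of an interval (a nontrivial such homeomorphism moves some point monotonically under iteration, hence has infinite order).

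First I would show that the hypothesis forces $\rho(b)$ to be the identity. Indeed, if $\rho(A_\lambda)$ is abelian, then the restriction $\rho|_{A_\lambda}$ factors through the abelianization of $A_\lambda$, and its image is a torsion-free abelian quotient of $A_\lambda$. By Lemma~\ref{lem:abel_aff}\eqref{item:abelian}, the image of the generator $b$ is a torsion element in the abelianization of $A_\lambda$ (it generates the finite factor); since $\rho(A_\lambda)$ is torsion-free, this image must be trivial. Hence $\rho(b)=\mathrm{id}$, and consequently $\rho(G_\lambda)=\langle \rho(a),\rho(a_+)\rangle$, because $G_\lambda$ is generated by $a$, $a_+$ and $b$.

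It then remains to check that $\rho(a)$ and $\rho(a_+)$ commute, and for this I would exhibit the relation $[a,a_+]=1$ directly inside $G_\lambda$, reading it off the standard action on $\R$. A one-line computation shows that both $a\circ a_+$ and $a_+\circ a$ send $x$ to $\lambda^2 x$ for $x>0$ and to $\lambda x$ for $x\le 0$; thus $a$ and $a_+$ commute as homeomorphisms of $\R$, hence as elements of $G_\lambda$. Applying $\rho$, the images $\rho(a)$ and $\rho(a_+)$ commute, so $\rho(G_\lambda)=\langle\rho(a),\rho(a_+)\rangle$ is abelian, as desired.

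There is no serious obstacle here: the argument is essentially bookkeeping of the group relations. The only points requiring care are the torsion-freeness of the target, which is what kills $\rho(b)$, and the verification that $a$ and $a_+$ genuinely commute in $G_\lambda$; it is this relation, special to the broken-homothety generators, that makes the argument close. The role of the assumption that $\lambda$ is algebraic is precisely to guarantee, via Lemma~\ref{lem:abel_aff}, that $b$ is torsion in the abelianization, so that its image in any torsion-free abelian quotient vanishes; without it the corresponding factor could be infinite cyclic and $\rho(b)$ would not have to die.
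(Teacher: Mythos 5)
Your proof is correct and follows essentially the same route as the paper: the nontrivial direction is handled in contrapositive, using torsion-freeness of $\Diff^1_+([0,1])$ together with Lemma~\ref{lem:abel_aff}\eqref{item:abelian} to kill $\rho(b)$, and then observing that the remaining generators commute (the paper invokes Lemma~\ref{lem:abel_G} for this last step, where you verify the relation $[a,a_+]=1$ directly). Your version merely spells out details the paper leaves implicit.
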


\begin{proof}
Clearly (2) implies (1). On the other hand, if the image $\rho(A_\lambda)$ is abelian, \emr{\eqref{item:abelian} in Lemma~\ref{lem:abel_aff} implies that the translation $b$ is in the kernel of $\rho$, and as in Lemma~\ref{lem:abel_G}}, $\rho(G_\lambda)$ itself is abelian.
\end{proof}

\begin{lem}\label{l:equiv2}
Let $\lambda>1$ be \emr{Galois hyperbolic} and $\rho:G_\lambda\to \Diff_+^1([0,1])$ be a morphism with nonabelian image. Then the following properties are equivalent:
\begin{enumerate}[(1)]
\item the action of $\rho(G_\lambda)$ has no global fixed point in $(0,1)$,
\item the action of $\rho(A_\lambda)$ has no global fixed point in $(0,1)$.
\end{enumerate}
\end{lem}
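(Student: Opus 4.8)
The plan is to prove the two implications separately, with the reverse one being immediate and essentially all the content lying in the forward direction, which I would handle by contraposition. Throughout, write $f=\rho(a)$, $g=\rho(b)$, $h=\rho(a_-)$, $k=\rho(a_+)$, and recall the relations $f=hk=kh$ together with $[f,h]=[f,k]=1$, coming from the fact that $a_\pm$ commute with $a$ in $G_\lambda$; note also that $\rho(G_\lambda)=\langle g,h,k\rangle$ since $f=hk$. Since $A_\lambda\subset G_\lambda$ one has $\Fix(\rho(G_\lambda))\cap(0,1)\subseteq \Fix(\rho(A_\lambda))\cap(0,1)$, so (2) $\Rightarrow$ (1) is trivial. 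For (1) $\Rightarrow$ (2) I would assume $K:=\Fix(\rho(A_\lambda))\cap(0,1)\neq\emptyset$ and produce a global fixed point of $\rho(G_\lambda)$ in $(0,1)$, contradicting (1).

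First I would locate a hyperbolic fixed point of $f$. By Lemma~\ref{l:equiv1} the image $\rho(A_\lambda)$ is nonabelian. Every element of $A_\lambda$ fixes the endpoints of each connected component of $(0,1)\setminus K$ (these endpoints lie in $K$, or are $0,1$), hence preserves each component; therefore the restriction of $\rho(A_\lambda)$ to at least one component $J=(u,v)$ is nonabelian. On $\overline J$ the group acts by $C^1$ diffeomorphisms without interior global fixed point, so Theorem~\ref{t:BMNR} applies --- this is precisely where Galois hyperbolicity of $\lambda$ is used --- and the action on $\overline J$ is topologically conjugate to the standard affine action. Combined with Proposition~\ref{p:affine}, this produces a single interior fixed point $s_0\in J$ of $f$ with $f'(s_0)=\lambda>1$, and moreover $f$ fixes no point of $\overline J$ other than $u,s_0,v$.

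Next I would show that $h$ and $k$ fix the endpoints of $J$. Since $s_0$ is a hyperbolic fixed point of $f$ and $h,k$ commute with $f$, Lemma~\ref{CommutingHyp} gives $h(s_0)=k(s_0)=s_0$. Being orientation-preserving and commuting with $f$, the map $h$ restricts to an increasing bijection of the closed set $\Fix(f)$ that fixes $s_0$. As $f$ has no fixed point in $(u,s_0)$, the point $u$ is the immediate predecessor of $s_0$ in $\Fix(f)$, and an order-preserving self-bijection of $\Fix(f)$ fixing $s_0$ must fix this immediate predecessor; hence $h(u)=u$, and likewise $k(u)=u$ (symmetrically $h(v)=k(v)=v$). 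Finally, since $K\neq\emptyset$ the component $J$ is a proper subinterval, so at least one endpoint --- say $u$ --- lies in $(0,1)$, whence $u\in K$ and $f(u)=g(u)=u$. Together with $h(u)=k(u)=u$ this shows $u$ is fixed by the generators $g,h,k$ of $\rho(G_\lambda)$, so $u\in\Fix(\rho(G_\lambda))\cap(0,1)$, contradicting (1); if $u\in\{0\}$ one argues identically with $v\in(0,1)$.

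The main obstacle is the third step. The naive hope that $h,k$ fix the given $A_\lambda$-fixed point fails, because that point need not be a \emph{hyperbolic} fixed point of $f$, and for a merely parabolic fixed point Lemma~\ref{CommutingHyp} gives no information. The device that resolves this is to pass to an endpoint of a nonabelian component: there the hyperbolicity of the interior fixed point $s_0$, via the order-rigidity of $\Fix(f)$ under the commuting maps $h,k$, forces the endpoints themselves to be fixed. I would expect the only points requiring care to be the bookkeeping of which endpoint of $J$ is interior and the verification that elements of $A_\lambda$ genuinely preserve each complementary component.
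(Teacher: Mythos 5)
Your proof is correct. The overall architecture coincides with the paper's: both invoke Lemma~\ref{l:equiv1} to get $\rho(A_\lambda)$ nonabelian, Theorem~\ref{t:BMNR} to produce an $A_\lambda$-invariant interval on which the action is conjugate to the standard affine one, Proposition~\ref{p:affine} to get the hyperbolic fixed point $s_0$ of $\rho(a)$, and then show that $\rho(a_\pm)$ fix the endpoints of that interval, so that these endpoints are global fixed points of $\rho(G_\lambda)$ and hence forced to be $0$ and $1$ (you phrase this contrapositively, taking a component of the complement of $\Fix(\rho(A_\lambda))$ with nonabelian restriction and deriving an interior global fixed point; this is only a cosmetic difference).

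Where you genuinely diverge is in the sub-step showing that $h=\rho(a_-)$ and $k=\rho(a_+)$ fix the endpoints. The paper first runs the derivative computation of Lemma~\ref{l:bbs-derivative} to conclude that $s_0$ is hyperbolic for at least one of $h,k$, say $h$, then defines $s_-$ as the first fixed point of $h$ to the left of $s_0$ and rules out $s_-\in(x,s_0)$ and $s_-\in[0,x)$ by two separate accumulation arguments (pushing fixed points of $h$ toward $s_0$ with $\rho(a)^{-n}$, respectively fixed points of $\rho(a)$ toward $s_0$ with $h^{-n}$), each contradicting hyperbolicity at $s_0$. You instead observe that $h$ and $k$ each induce an order-preserving self-bijection of $\Fix(f)$ fixing $s_0$, and that such a bijection must fix the immediate predecessor and successor of $s_0$ in $\Fix(f)$, which are exactly the endpoints $u,v$. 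Your argument uses only the hyperbolicity of $s_0$ for $f$ (via Lemma~\ref{CommutingHyp}), needs no case analysis and no statement about which of $h,k$ is hyperbolic at $s_0$, and gets both endpoints and both maps at once; the paper's version is closer in spirit to the Lemma~\ref{FinitelyMany}/Lemma~\ref{CommutingHyp} machinery it has already set up. Both are complete; yours is the more economical.
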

\begin{proof}
Again, (2) easily implies (1). 
Assume (1). Since the image $\rho(G_\lambda)$ is nonabelian, by Lemma \ref{l:equiv1} also the image $\rho(A_\lambda)$ is nonabelian. Using Theorem~\ref{t:BMNR}, we find at least one interval $I=[x,y]\subset [0,1]$ such that
\begin{enumerate}[(1)]
\item $I$ is preserved by $\rho(A_\lambda)$,
\item $\rho(A_\lambda)$ has no global fixed point in the interior of $I$,
\item the restriction $\rho(A_\lambda)\vert_I$ is nonabelian and therefore it is topologically conjugate to the standard affine action on $\R$.
\end{enumerate}
Moreover, by Proposition \ref{p:affine}, there exists a unique point $s_0\in (x,y)$ in the interior of $I$ which is a hyperbolic fixed point for $\rho(a)$,
with derivative $\rho(a)'(s_0)=\lambda$.

Proceeding as in Lemma~\ref{l:bbs-derivative}, the elements $\rho(a_\pm)$ must fix the point $s_0$ and we can suppose that
$s_0$ is a hyperbolic fixed point for $\rho(a_-)$, with derivative $\rho(a_-)'(s_0)>1$. Let $s_-$ be the first fixed point of $\rho(a_-)$ which lies to the left of $s_0$.

If $s_-\in (x,s_0)$, then $\{\rho(a)^{-n}(s_-)\mid n\in \N\}$ is a sequence of fixed points for $\rho(a_-)$ that converges to $s_0$ as $n\to\infty$. But this is not possible
because the derivative of $\rho(a_-)$ at $s_0$ is not $1$ (\textit{cf.}~Lemma \ref{CommutingHyp}).

Similarly, if $s_-\in [0,x)$, then $\{\rho(a_-)^{-n}(x)\mid n\in \N\}$ is a sequence of fixed points for $\rho(a)$ that converges to $s_0$ as $n\to\infty$. Again, this is not
possible.

Thus $s_-=x$ and so $x$ is a global fixed point for $\rho(G_\lambda)$. As we are assuming (1), this implies $x=0$.
Similarly, denoting by $s_+$ the first fixed point of $\rho(a_-)$ which lies to the right of $s_0$, we obtain that $s_+=y$ and so $y=1$. This is what we wanted to prove.
\end{proof}

\begin{proof}[Proof of Proposition~\ref{p:noglobal2}]
The statement follows directly from Lemmas~\ref{l:equiv1} and \ref{l:equiv2}, \emr{and from Proposition~\ref{p:noglobal}}.
\end{proof}

\subsection{General case}

We proceed now to the proof of Theorem~\ref{t:key}.

\begin{proof}[Proof of Theorem \ref{t:key}]
Let $\rho:G_\lambda\to \Diff^1_+([0,1])$ be a homomorphism. 
If the image $\rho(G_\lambda)$ is abelian, there is nothing to prove. Hence we can assume that $\rho(G_\lambda)$ is nonabelian
and by Lemma~\ref{l:equiv1} this is equivalent to saying that $\rho(A_\lambda)$ is nonabelian, where $A_\lambda$ is denoting the subgroup
generated by $a$ and $b$.

If $\rho(A_\lambda)$ is nonabelian, then after Theorem~\ref{t:BMNR}, there exists at least an interval $I\subset [0,1]$ which is preserved
by $\rho(A_\lambda)$ and such that $\rho(A_\lambda)$ is topologically conjugate to the standard affine action.

\begin{claim}
There are only finitely many pairwise disjoint intervals $I_1,\ldots, I_n$ that are preserved by $\rho(A_\lambda)$ and such that for any $i=1,\ldots,n$
the restrictions $\rho(A_\lambda)\vert_{I_i}$ is nonabelian.
\end{claim}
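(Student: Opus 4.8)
The plan is to show that every interval $I_i$ as in the claim contains, in its interior, a fixed point of $\rho(a)$ at which the derivative equals $\lambda$; since these points are pairwise distinct (the $I_i$ being disjoint) and all have derivative bounded below by a fixed $\delta>1$, Lemma~\ref{FinitelyMany} will bound their number.

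First I would fix a real number $\delta$ with $1<\delta<\lambda$. Then, for a single interval $I_i=[x_i,y_i]$ preserved by $\rho(A_\lambda)$ with nonabelian restriction, I would locate inside it a hyperbolic fixed point of $\rho(a)$ of derivative $\lambda$. To do this, consider the set $F$ of points of $I_i$ fixed by all of $\rho(A_\lambda)$; this is a closed $\rho(A_\lambda)$-invariant set, and each connected component of $I_i\setminus F$ is an open interval preserved by $\rho(A_\lambda)$. I claim at least one such component $J_i$ carries a nonabelian restriction. Indeed, both $\rho(a)$ and $\rho(b)$ fix $F$ pointwise, so the commutator $[\rho(a),\rho(b)]$ is the identity on $F$; if the restriction to every complementary component were abelian, then $[\rho(a),\rho(b)]$ would be the identity on each such component as well, hence on all of $I_i$, contradicting the nonabelianity of $\rho(A_\lambda)\vert_{I_i}$ (the group $A_\lambda$ being generated by $a$ and $b$).

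On the component $J_i$ the restriction $\rho(A_\lambda)\vert_{J_i}$ is nonabelian and, by construction, has no global fixed point in its interior. By Theorem~\ref{t:BMNR} it is therefore topologically conjugate to the standard affine action, and Proposition~\ref{p:affine} then provides a point $s_i$ in the interior of $J_i$---hence in the interior of $I_i$---which is a fixed point of $\rho(a)$ with $\rho(a)'(s_i)=\lambda$. Because the intervals $I_1,\dots,I_n$ are pairwise disjoint, the resulting points $s_1,s_2,\dots$ are pairwise distinct.

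Finally I would invoke Lemma~\ref{FinitelyMany} applied to the single diffeomorphism $\alpha=\rho(a)$ and to the chosen $\delta$: there are only finitely many fixed points $s$ of $\rho(a)$ with $\rho(a)'(s)>\delta$. Since each $s_i$ satisfies $\rho(a)'(s_i)=\lambda>\delta$, the collection $\{s_i\}$ is finite, and so is the collection of intervals $I_i$. The step I expect to require the most care is the extraction of the nonabelian subinterval $J_i$ with no interior global fixed point, since this is precisely the hypothesis needed to bring Theorem~\ref{t:BMNR} to bear; the commutator argument above is what makes it work.
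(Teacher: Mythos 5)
Your proof is correct and follows essentially the same route as the paper: produce inside each interval a fixed point of $\rho(a)$ with derivative $\lambda$ via Theorem~\ref{t:BMNR} and Proposition~\ref{p:affine}, then invoke Lemma~\ref{FinitelyMany}. Your extra step of passing to a component of the complement of the global fixed set (justified by the commutator argument) is a welcome refinement, since it supplies the no-interior-global-fixed-point hypothesis of Theorem~\ref{t:BMNR} that the paper's one-line proof leaves implicit.
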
 

\begin{proof}[Proof of Claim]
Let $I$ be an interval preserved by $\rho(A_\lambda)$ and such that $\rho(A_\lambda)\vert_I$ is nonabelian. By Theorem~\ref{t:BMNR}, the action is topologically
conjugate to the standard action of $A_\lambda$ and by Proposition~\ref{p:affine} there exists a point $s\in I$ which is fixed by $\rho(a)$ and such that
$\rho(a)'(s_n)=\lambda>1$.
Then Lemma~\ref{FinitelyMany} implies that there can only be finitely many such intervals, whence the first statement.
\end{proof}

\begin{claim}
Let $I_1,\ldots,I_n$ be the intervals provided by the previous claim. Then $\rho(G_\lambda)$ preserves $I_i$ for any $i=1,\ldots,n$.
\end{claim}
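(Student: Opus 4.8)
The plan is to reduce the claim to the single generator $\rho(a_+)$ and then exploit that it commutes with $\rho(a)$. Since $G_\lambda=\langle a,a_+,b\rangle$ and both $a,b$ lie in $A_\lambda$, the images $\rho(a),\rho(b)$ already preserve each $I_i$ by the very construction of these intervals. Moreover $a_-=aa_+^{-1}$, so once $\rho(a),\rho(a_+)$ preserve $I_i$ the same holds for $\rho(a_-)$, and it suffices to prove that $\rho(a_+)$ preserves $I_i$ for every $i=1,\ldots,n$.

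The structural input I would use is the relation $a_+a=aa_+$, which holds in $G_\lambda$: in the standard action the supports of $a_+$ and $a_-=aa_+^{-1}$ meet only at $0$, so $a_+$ and $a_-$ commute, and therefore $a_+$ commutes with $a=a_+a_-$. Consequently $\rho(a_+)$ commutes with $\rho(a)$. Writing $s_0=s_0^{(i)}$ for the hyperbolic fixed point of $\rho(a)$ in the interior of $I_i$ supplied by Proposition~\ref{p:affine} and Theorem~\ref{t:BMNR}, Lemma~\ref{CommutingHyp} gives immediately that $\rho(a_+)(s_0)=s_0$.

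I would then analyse the action of $\rho(a_+)$ on the fixed-point set $\Fix(\rho(a))$. Because $\rho(a_+)$ commutes with $\rho(a)$, it maps $\Fix(\rho(a))$ bijectively onto itself, and being an orientation-preserving homeomorphism it does so in an order-preserving way. Fixing $s_0$, it therefore restricts to an order-preserving bijection of $\{t\in\Fix(\rho(a)):t>s_0\}$ onto itself; any order-preserving bijection of a totally ordered set onto itself fixes the minimum of that set whenever it exists. The topological conjugacy of $\rho(A_\lambda)\vert_{I_i}$ to the standard affine action (where $a$ acts as $t\mapsto\lambda t$ with unique fixed point $0$) shows that inside $(x_i,y_i)$ the only fixed point of $\rho(a)$ is $s_0$; hence the minimum of $\{t\in\Fix(\rho(a)):t>s_0\}$ is precisely the right endpoint $y_i$ of $I_i$. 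Thus $\rho(a_+)(y_i)=y_i$, and the symmetric argument, using the maximum of $\{t\in\Fix(\rho(a)):t<s_0\}$, gives $\rho(a_+)(x_i)=x_i$.

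It follows that $\rho(a_+)$ fixes both endpoints of $I_i=[x_i,y_i]$ and hence preserves $I_i$; together with $\rho(a),\rho(b)$ this yields that $\rho(G_\lambda)=\langle\rho(a),\rho(a_+),\rho(b)\rangle$ preserves $I_i$, as claimed. The one step requiring genuine care — and the main obstacle — is the global bookkeeping of $\Fix(\rho(a))$: one must be certain that $y_i$ really is the nearest fixed point of $\rho(a)$ lying to the right of $s_0$, and this is exactly where the maximality and the affine-model description of $I_i$ enter, since no a priori control is available directly from $G_\lambda$.
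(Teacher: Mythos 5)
Your argument is correct, and it reaches the conclusion by a slightly different mechanism than the paper. Both proofs share the same opening moves: use Theorem~\ref{t:BMNR} and Proposition~\ref{p:affine} to produce the hyperbolic fixed point $s_0\in \mathrm{int}(I_i)$ of $\rho(a)$ with derivative $\lambda$, and Lemma~\ref{CommutingHyp} to force $\rho(a_\pm)(s_0)=s_0$. Where you diverge is in how the endpoints get pinned down. The paper routes the claim through (the proof of) Lemma~\ref{l:equiv2}: it arranges $\rho(a_-)'(s_0)>1$, takes the first fixed point $s_-$ of $\rho(a_-)$ to the left of $s_0$, and excludes $s_-\neq x_i$ by manufacturing a sequence of fixed points (of $\rho(a_-)$ or of $\rho(a)$) accumulating at the hyperbolic point $s_0$, contradicting Lemma~\ref{FinitelyMany}. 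You instead observe that $\rho(a_+)$, commuting with $\rho(a)$, induces an order-preserving permutation of $\Fix(\rho(a))$ fixing $s_0$, and therefore fixes the immediate neighbours of $s_0$ in $\Fix(\rho(a))$; the affine model on $I_i$ identifies these neighbours as exactly $x_i$ and $y_i$. This is a genuinely more elementary finish: it needs no information about the derivatives of $\rho(a_\pm)$ at $s_0$ and no second accumulation argument, only the hyperbolicity of $\rho(a)$ at $s_0$ (to invoke Lemma~\ref{CommutingHyp}) and the isolation of $s_0$ in $\Fix(\rho(a))\cap I_i$. What the paper's longer detour buys is that Lemma~\ref{l:equiv2} is needed anyway for Proposition~\ref{p:noglobal2}, so citing it here costs nothing extra; your version is self-contained and, for this claim alone, cleaner. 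You correctly identify the one point requiring care, namely that $y_i$ really is $\min\{t\in\Fix(\rho(a)): t>s_0\}$, which follows from the topological conjugacy of $\rho(A_\lambda)\vert_{I_i}$ to the standard affine action together with the fact that the endpoints of an $\rho(A_\lambda)$-invariant interval are fixed by $\rho(a)$.
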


\begin{proof}[Proof of Claim]
Let $I$ be an interval as above. Let $J\subset I$ be a interval which is preserved by $\rho(G_\lambda)$ and such that
$\rho(G_\lambda)$ has no global fixed point in its interior. By Proposition~\ref{l:equiv2}, we must have the equality $I=J$.
\end{proof}

After Proposition~\ref{p:noglobal2}, we deduce that the restriction of the action of $G_\lambda$ to any
of the intervals $I_1,\ldots,I_n$ is topologically conjugate to one of the two canonical affine actions that filters through a quotient $\rho_\pm:G_\lambda\to A_\lambda$.
This is what we wanted to prove.
\end{proof}

\section{$C^2$ actions with locally non-discrete stabilisers}
\label{s:C2}

\subsection{Szekeres vector field}

The method that we present in this section is inspired by \cite[Prop.~4.17]{hyperbolic} and relies on the following important result in one-dimensional dynamics, due to Szekeres \cite{szekeres}. Here we state it as in \cite[\S~4.1.3]{navas-book}:

\begin{thm}[Szekeres]\label{t:Szekeres}
Let $f$ be a $C^2$ diffeomorphism of the half-open interval $[0,1)$ with no fixed point in $(0,1)$. Then there exists a unique $C^1$ vector field $\mathcal X$ on $[0,1)$ with no singularities on $(0,1)$ such that
\begin{enumerate}[1)]
\item $f$ is the time-one map of the flow $\{\phi_{\mathcal X}^s\}$ generated  by $\mathcal X$,
\item the flow $\{\phi_\mathcal X^s\}$ coincides with the $C^1$ centraliser of $f$ in $\Diff_+^1([0,1))$.
\end{enumerate}
\end{thm}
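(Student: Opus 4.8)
The plan is to realise $f$ inside a flow by an explicit limiting construction, and then to identify that flow with the centraliser by means of Kopell's lemma.

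First I would normalise: up to replacing $f$ by $f^{-1}$, assume $f$ is a contraction, $f(x)<x$ for $x\in(0,1)$, so that $f^n(x)\downarrow 0$. The candidate vector field is the Szekeres limit
\[
\mathcal X(x)=\lim_{n\to\infty}w_n(x),\qquad w_n(x)=\frac{f^{n+1}(x)-f^n(x)}{(f^n)'(x)}.
\]
A direct chain-rule computation gives the key recursion $w_n(f(x))=f'(x)\,w_{n+1}(x)$, and the mean value theorem rewrites successive ratios as $w_{n+1}(x)/w_n(x)=f'(\xi_n)/f'(f^n(x))$ with $\xi_n\in(f^{n+1}(x),f^n(x))$. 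Since $f$ is $C^2$, the map $\log f'$ is Lipschitz on compact subsets of $[0,1)$, whence
\[
\left|\log\frac{w_{n+1}(x)}{w_n(x)}\right|\le \mathrm{Lip}(\log f')\,\bigl|f^n(x)-f^{n+1}(x)\bigr|,
\]
and these bounds sum telescopically to at most $\mathrm{Lip}(\log f')\cdot x<\infty$. Hence $\log|w_n|$ converges uniformly on compacts of $(0,1)$, so $\mathcal X$ exists, is continuous and nowhere zero on $(0,1)$.

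I would then upgrade the convergence to $C^1$: differentiating the expression for $w_n$ and controlling $w_n'$ by the same bounded-distortion mechanism shows that $(w_n')$ is uniformly Cauchy on compacts, so $\mathcal X\in C^1$ and $\mathcal X(0)=0$. I expect this $C^1$ estimate to be the main technical obstacle, since it is here that the $C^2$ hypothesis is genuinely used (in $C^1$ regularity the construction breaks down, \emph{cf.}~Remark~\ref{r:C1+}): one must control a second-order distortion cocycle rather than merely $\log f'$. Passing to the limit in the recursion yields the equivariance $\mathcal X\circ f=f'\cdot\mathcal X$, so $f$ commutes with the flow $\{\phi^s_{\mathcal X}\}$. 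Equivariance also makes the return time $T(x)=\int_{f(x)}^{x}\!dt/|\mathcal X(t)|$ independent of $x$; calling this constant $T_0$, one has $f=\phi^{T_0}_{\mathcal X}$, and replacing $\mathcal X$ by $T_0\,\mathcal X$ normalises $f$ to be the time-one map, giving part~(1).

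For part~(2), each $\phi^s_{\mathcal X}$ is a $C^1$ diffeomorphism commuting with $f=\phi^1_{\mathcal X}$, so the flow is contained in the $C^1$ centraliser. For the reverse inclusion I would invoke Kopell's lemma. Given $g\in\Diff^1_+([0,1))$ commuting with $f$, pick $x_0\in(0,1)$; since the $\phi_{\mathcal X}$-orbit of $x_0$ is all of $(0,1)$, there is $s$ with $\phi^s_{\mathcal X}(x_0)=g(x_0)$, and then $h=\phi^{-s}_{\mathcal X}\,g$ is $C^1$, commutes with the $C^2$ contraction $f$, and fixes $x_0$; Kopell's lemma forces $h=\mathrm{id}$, so $g=\phi^s_{\mathcal X}$. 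This proves the flow equals the centraliser. Finally, uniqueness follows formally: if $\mathcal X_1,\mathcal X_2$ are two such fields, every $\phi^s_{\mathcal X_2}$ commutes with $f$, hence lies in the flow of $\mathcal X_1$ by part~(2); the resulting continuous reparametrisation $\R\to\R$ is a group automorphism fixing $1$ (both time-one maps equal $f$), hence the identity, so $\mathcal X_1=\mathcal X_2$.
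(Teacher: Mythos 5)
The paper does not prove this statement: Theorem~\ref{t:Szekeres} is quoted as a black box from Szekeres's original work, in the form given in \cite[\S~4.1.3]{navas-book}, so there is no in-paper argument to compare against. Your sketch reconstructs the classical proof along the standard lines, and it is essentially correct: the recursion $w_n(f(x))=f'(x)\,w_{n+1}(x)$, the mean-value/bounded-distortion estimate $\bigl|\log(w_{n+1}/w_n)\bigr|\le \mathrm{Lip}(\log f')\,|f^n(x)-f^{n+1}(x)|$ with telescoping sum $\le x$, the equivariance $f_*\mathcal X=\mathcal X$ forcing the return time to be constant, and the identification of the centraliser with the flow via Kopell's lemma (this last step is exactly how the literature derives part~(2) from part~(1)); the uniqueness argument via a continuous automorphism of $\R$ fixing $1$ is also standard and correct. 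The one place your outline is genuinely thin is the $C^1$ convergence of $(w_n)$: you assert that ``the same bounded-distortion mechanism'' controls $w_n'$, but the computation $w_n'(x)=f'(f^n(x))-1-w_n(x)\cdot(\log (f^n)')'(x)$ requires controlling the nonlinearity cocycle $(\log (f^n)')'(x)=\sum_{k<n}(\log f')'(f^kx)\,(f^k)'(x)$, whose uniform convergence on compacts (via $(f^k)'(x)\lesssim f^k(x)-f^{k+1}(x)$ by bounded distortion) is the technical heart of Szekeres's theorem and the only point where $C^2$ is truly needed; you correctly flag this but do not carry it out, and a careful treatment must also check $C^1$ regularity of $\mathcal X$ at the endpoint $0$ (noting that the unrescaled field satisfies $\mathcal X'(0)=f'(0)-1$, consistent with $f$ being the time-$T_0$ map only after the rescaling you perform). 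With that step filled in, the proof is complete.
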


\subsection{An obstruction to $C^2$ smoothability}

The criterion we provide holds in a framework that is far more general that the one of piecewise-projective dynamics. First we need a statement of differentiable rigidity for the conjugacy of some particular actions.

\begin{prop}\label{p:nonC2}
Take $a\in (0,1)$ and assume that two homeomorphisms $f,g\in \Homeo_+([0,1])$ satisfy the following properties:
\begin{enumerate}[1)]
\item the restrictions of $f$ and $g$ to $[0,a]$ are $C^2$ contractions, namely the restrictions are $C^2$ diffeomorphisms such that
\[
f(x)<x,\quad g(x)<x\quad\text{for every }x\in (0,a],
\]
\item $f$ and $g$ commute in restriction to $[0,a]$, that is
\[
fg(x)=gf(x)\quad\text{for every }x\in [0,a],
\]
\item the $C^2$ germs of $f$ and $g$ at $0$ generate an abelian free group of rank $2$.
\end{enumerate}
Then for every homeomorphism $\varphi\in \Homeo_+([0,1])$ such that $\varphi f\varphi^{-1}$, $\varphi g\varphi^{-1}$ are $C^2$ in restriction to $[0,\varphi(a)]$, one has that the restriction of $\varphi$ to $(0,a]$ is $C^2$.
\end{prop}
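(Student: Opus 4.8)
The plan is to linearise the dynamics near the common fixed point $0$ by means of the Szekeres vector field, reducing the statement to a rigidity property for conjugacies between groups of translations, and afterwards to propagate the gained regularity from a right neighbourhood of $0$ to all of $(0,a]$ using the equivariance $\varphi\circ f=\tilde f\circ\varphi$. Throughout I write $\tilde f=\varphi f\varphi^{-1}$ and $\tilde g=\varphi g\varphi^{-1}$, which by hypothesis are again commuting $C^2$ contractions of $[0,\varphi(a)]$.

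First I would set up Szekeres coordinates. Since $f$ is a $C^2$ contraction of $[0,a]$ whose only fixed point is $0$, Theorem~\ref{t:Szekeres} produces a $C^1$ vector field $\mathcal X$ on $[0,a)$, nonvanishing on $(0,a)$, whose flow $\{\phi^s\}$ satisfies $\phi^1=f$ and coincides with the $C^1$ centraliser of $f$. Because $g$ is $C^1$ and commutes with $f$, it lies in this flow, say $g=\phi^\tau$. Here condition (3) is essential: if $\tau=p/q$ were rational, then $f^{p}$ and $g^{q}$ would be the same flow element, hence share the same $C^2$ germ at $0$, contradicting that the germs of $f,g$ generate $\Z^2$; thus $\tau$ is irrational. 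Integrating $\mathcal X$ yields a $C^2$ diffeomorphism $\theta$ from $(0,a)$ onto a half-line $U\subseteq\R$ with $\theta\circ\phi^s=\theta+s$, so that $\theta\circ f=\theta+c_f$ and $\theta\circ g=\theta+c_g$ with $c_g/c_f=\tau$; in particular $\Gamma:=\langle c_f,c_g\rangle$ is \emph{dense} in $\R$. Performing the same construction for $\tilde f$ gives a $C^2$ coordinate $\tilde\theta$ with $\tilde\theta\circ\tilde f=\tilde\theta+\tilde c_f$ and $\tilde\theta\circ\tilde g=\tilde\theta+\tilde c_g$, where now only the membership of $\tilde g$ in the centraliser flow of $\tilde f$ is used.

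Next I would prove that $\Psi:=\tilde\theta\circ\varphi\circ\theta^{-1}$ is affine. From $\varphi f=\tilde f\varphi$ and the conjugation relations a short computation gives $\Psi(x+c_f)=\Psi(x)+\tilde c_f$, and likewise $\Psi(x+c_g)=\Psi(x)+\tilde c_g$, whence $\Psi(x+\gamma)=\Psi(x)+\tilde\gamma$ for every $\gamma\in\Gamma$, where $\gamma\mapsto\tilde\gamma$ is the induced homomorphism. Since $\Psi$ is an increasing homeomorphism this homomorphism is monotone, hence of the form $\gamma\mapsto\kappa\gamma$ for some $\kappa>0$; then $x\mapsto\Psi(x)-\kappa x$ is continuous and $\Gamma$-periodic, and as $\Gamma$ is dense it must be constant. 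Therefore $\Psi$ is affine and $\varphi=\tilde\theta^{-1}\circ\Psi\circ\theta$ is $C^2$ on a right neighbourhood of $0$, being a composition of $C^2$ maps. Finally, for arbitrary $x\in(0,a]$ I would choose $n$ with $f^{n}(x)$ in that neighbourhood and use $\varphi=\tilde f^{-n}\circ\varphi\circ f^{n}$ locally near $x$; since $f^{n}$ and $\tilde f^{-n}$ are $C^2$ on the relevant compact subintervals of $[0,a]$ and $[0,\varphi(a)]$, this shows that $\varphi$ is $C^2$ near $x$, and hence on all of $(0,a]$.

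The crux of the argument, and the point where condition (3) enters decisively, is the affineness of $\Psi$: only the irrationality of $\tau$ makes $\Gamma$ dense and forces the $\Gamma$-periodic correction term to vanish. Were $\tau$ rational (so that $f$ and $g$ shared a common root in their germs), $\Gamma$ would be cyclic and $\Psi$ could be an arbitrary homeomorphism commuting with a single translation, so no regularity of $\varphi$ would follow. The remaining technical nuisance is the precise membership of $g$ — which is an embedding rather than an onto self-diffeomorphism of the half-open interval — in the Szekeres flow of $f$, together with the behaviour of $\theta,\tilde\theta$ near the open endpoint; both are routine once one works with germs at the fixed point $0$ and invokes the centraliser characterisation in Theorem~\ref{t:Szekeres}.
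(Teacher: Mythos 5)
Your proof is correct, and it rests on the same two pillars as the paper's --- the Szekeres vector field of $f$ and the density of the rank-two time group $\Z+\tau\Z$ forced by hypothesis (3) --- but the implementation is genuinely different. The paper stays in the original coordinate: it first constructs, for every positive $\alpha\in\Z+\tau\Z$, an actual element $h_\alpha\in\langle f,g\rangle$ whose restriction to \emph{all} of $[0,a]$ equals $\phi^\alpha_{\mathcal X}$ (this requires a conjugation trick $f^{-N}f^\ell g^m f^N$, because $f^\ell g^m$ with negative exponents only agrees with the flow near $0$), and then obtains $\varphi'(x)=\mathcal Y(\varphi(x))/\mathcal X(x)$ by a difference-quotient computation along this dense set of times, from which $C^2$ regularity of $\varphi$ follows by bootstrapping. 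You instead integrate both Szekeres fields into $C^2$ time coordinates $\theta,\tilde\theta$, so that the conjugacy becomes a homeomorphism $\Psi$ of a half-line intertwining two dense groups of translations, and you invoke the classical rigidity statement that such a $\Psi$ must be affine; this neatly sidesteps the paper's first Claim, since the partial-definedness of $f^\ell g^m$ on $[0,a]$ turns into trivial bookkeeping about which translates stay in the half-line (one routes through the positive direction first, and only positive elements of $\Gamma$ are needed for the periodicity argument). The trade-off is that your route requires introducing and inverting the time coordinates (whose $C^2$ regularity you correctly deduce from the $C^1$ regularity and nonvanishing of the fields on $(0,a)$), whereas the paper's difference-quotient argument produces the identity $\varphi_*\mathcal X=\mathcal Y$ directly, which is the statement it actually reuses elsewhere. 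Both yield exactly $C^2$ regularity of $\varphi$ on $(0,a]$, with the endpoint $a$ handled in either case by the equivariance $\varphi=\tilde f^{-1}\varphi f$.
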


Before giving the proof of the proposition, which encloses the main arguments, we present the main result of the section:

\begin{thm}\label{t:nonC2}
Assume $a\in (0,1)$, $f,g\in \Homeo_+([0,1])$ satisfy the properties of Proposition~\ref{p:nonC2} above. Moreover, assume that there exists $h\in \Homeo_+([0,1])$ such that there exists $t\in (0,a)$ which is a $C^2$ discontinuity point of $h$ and $h(t)\in (0,a)$.

Then the natural action of $\langle f,g,h\rangle \subset \Homeo_+([0,1])$ on $[0,1]$ is not $C^2$-smoothable.
\end{thm}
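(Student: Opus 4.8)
The plan is to argue by contradiction. Suppose there is a homeomorphism $\varphi\in\Homeo_+([0,1])$ conjugating $\langle f,g,h\rangle$ into $\Diff^2_+([0,1])$. The strategy is to apply Proposition~\ref{p:nonC2} to the pair $f,g$ in order to gain regularity \emph{of the conjugating map $\varphi$ itself} near the point $t$, and then to transfer this regularity to $h$, contradicting the hypothesis that $t$ is a $C^2$ discontinuity point of $h$.

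First I would observe that since $\varphi f\varphi^{-1}$ and $\varphi g\varphi^{-1}$ are globally $C^2$ on $[0,1]$ by assumption, they are in particular $C^2$ in restriction to $[0,\varphi(a)]$. The hypotheses of Proposition~\ref{p:nonC2} are exactly the three listed conditions on $f,g$ (commuting $C^2$ contractions of $[0,a]$ whose germs at $0$ generate a free abelian group of rank $2$), which hold by the assumption that $f,g$ satisfy the properties of that proposition. Therefore Proposition~\ref{p:nonC2} applies and yields that the restriction of $\varphi$ to $(0,a]$ is $C^2$. This is the crucial gain: a priori $\varphi$ is only a homeomorphism, but the rigidity coming from the Szekeres vector field forces it to be a $C^2$ diffeomorphism on the open interval $(0,a]$, and in particular on a neighbourhood of $t$ and of $h(t)$, both of which lie in $(0,a)$.

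Next I would exploit this to reach the contradiction. By hypothesis $h(t)\in(0,a)$, so $\varphi$ is $C^2$ near $h(t)$ as well, and both $\varphi$ and $\varphi^{-1}$ are $C^2$ diffeomorphisms between the relevant neighbourhoods (one checks that the inverse of a $C^2$ diffeomorphism onto its image, with nonvanishing derivative, is again $C^2$; the derivative is nonzero because $\varphi$ is a homeomorphism realized as a $C^2$ diffeomorphism on $(0,a]$). Now consider the conjugate $\varphi h\varphi^{-1}$, which by assumption lies in $\Diff^2_+([0,1])$ and is therefore $C^2$ at the point $\varphi(t)$. We can then write, on a neighbourhood of $t$,
\[
h=\varphi^{-1}\circ(\varphi h\varphi^{-1})\circ\varphi,
\]
a composition of three maps each of which is $C^2$ on the relevant neighbourhood. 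By the chain rule this composition is $C^2$ at $t$, contradicting the assumption that $t$ is a $C^2$ discontinuity point of $h$.

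The main obstacle is the first step, namely verifying cleanly that Proposition~\ref{p:nonC2} delivers genuine $C^2$ regularity of $\varphi$ on the whole of $(0,a]$ and not merely at isolated points; this is precisely where the Szekeres vector field does the work, identifying the smooth structure near $0$ intrinsically through the centraliser of the contraction and forcing $\varphi$ to intertwine two such structures smoothly. A secondary point requiring care is that the conclusion of Proposition~\ref{p:nonC2} is about regularity of $\varphi$ on the \emph{open} interval $(0,a]$, so one must ensure that $t$ and $h(t)$ are interior points (which is guaranteed since both lie in $(0,a)$) before concluding that the conjugation formula for $h$ is valid and $C^2$ on a full neighbourhood of $t$.
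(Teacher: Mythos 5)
Your proposal is correct and follows essentially the same route as the paper: contradiction, invoke Proposition~\ref{p:nonC2} to upgrade $\varphi$ to a $C^2$ diffeomorphism on $(0,a]$, then transfer the $C^2$ discontinuity of $h$ at $t$ through the conjugation (using $h(t)\in(0,a)$ so that $\varphi$ is also smooth with nonvanishing derivative near $h(t)$) to contradict the smoothness of $\varphi h\varphi^{-1}$. Your version merely writes the final step as $h=\varphi^{-1}\circ(\varphi h\varphi^{-1})\circ\varphi$ rather than pushing the discontinuity forward to $\varphi(t)$, which is the same argument read in the other direction, and your explicit attention to the nonvanishing of $\varphi'$ is a point the paper leaves implicit.
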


\begin{proof}[Proof of Theorem~\ref{t:nonC2}]
We argue by contradiction. If there was a homeomorphism $\varphi\in \Homeo_+([0,1])$ such that $\varphi\langle f,g,h\rangle\varphi^{-1}$ is in $\Diff^2_+([0,1])$, then $\varphi$ would satisfy the requirements of Proposition~\ref{p:nonC2}, whence $\varphi$ would be $C^2$ in restriction to $(0,a]$.

However $h$ has a $C^2$ discontinuity point $t\in (0,a)$ and hence the conjugate $\varphi h\varphi^{-1}$ would have $\varphi(t)$ as $C^2$ discontinuity point, against the assumption that $\varphi h\varphi^{-1}$ is $C^2$.
\end{proof}

\begin{proof}[Proof of Proposition~\ref{p:nonC2}]
As $f$ is a $C^2$ contraction when restricted to $[0,a]$, Szekeres' Theorem \ref{t:Szekeres} applies: we denote by $\mathcal X$ the Szekeres vector field of $f$, which is $C^1$, defined on $[0,a)$ and with no singularities on $(0,a)$. We have the assumption 2) that $f$ and $g$ commute in restriction to $[0,a]$, so by Szekeres' theorem $g$ belongs to the Szekeres flow $\{\phi^s_{\mathcal X}\}$. Let $\lambda>0$ be such that $g=\phi^\lambda_{\mathcal X}$. Then by assumption 3), the subgroup $A:=\langle 1,\lambda\rangle\subset \R$ is a dense abelian group of rank $2$.

As $f$ and $g$ are contractions, we have that for any positive power $n\in \N$, the restrictions of the iterates $f^n$ and $g^n$ to the interval $[0,a]$ coincide with the times $\phi^n_{\mathcal X}$ and $\phi^{n\lambda}_{\mathcal{X}}$ respectively (however such a statement is in general not true for negative powers of $f$ and $g$). More generally, we have the following:

\begin{claim}
Denote by $A$ the rank $2$ abelian subgroup of $\R$ generated by $1$ and $\lambda$.
For every $\alpha\in A, \alpha>0$ there exists an element $h_\alpha\in \langle f,g\rangle$ such that
\[
h_\alpha\vert_{[0,a]}(x)=\phi^\alpha_{\mathcal X}(x)\quad \text{for any }x\in [0,a].
\]
Moreover, $f$ and $h_\alpha$ commute on $[0,a]$: $[f,h_\alpha]\vert_{[0,a]}=[h_\alpha,f]\vert_{[0,a]}=id\vert_{[0,a]}$.
\end{claim}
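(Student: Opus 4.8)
The plan is to realize each flow time $\phi^\alpha_{\mathcal X}$, for positive $\alpha\in A$, by an explicit two-syllable word in $f$ and $g$, taking care that the composition never leaves the interval $[0,a]$ on which the Szekeres flow identity is valid. The starting observation is that, since $f$ and $g$ are contractions of $[0,a]$ coinciding there with $\phi^1_{\mathcal X}$ and $\phi^\lambda_{\mathcal X}$, their \emph{positive} iterates preserve $[0,a]$ and satisfy $f^m\vert_{[0,a]}=\phi^m_{\mathcal X}$ and $g^n\vert_{[0,a]}=\phi^{n\lambda}_{\mathcal X}$ for all integers $m,n\ge 0$. The difficulty lies precisely with negative exponents: $f^{-1}$ and $g^{-1}$ are expansions, which may push points of $[0,a]$ outside of $[0,a]$, and there the identification with the flow breaks down. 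This domain-control issue is the only genuinely delicate point of the argument.

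To overcome it, I would write $\alpha=m+n\lambda$ with $m,n\in\Z$; this representation is unique because $1,\lambda$ are rationally independent (assumption 3). Since $\alpha>0$ and $\lambda>0$, at most one of $m,n$ is negative, and I treat three cases. If $m,n\ge 0$, simply set $h_\alpha=f^m g^n$, which preserves $[0,a]$ and restricts there to $\phi^m_{\mathcal X}\circ\phi^{n\lambda}_{\mathcal X}=\phi^\alpha_{\mathcal X}$. If $n<0\le m$, then $\alpha>0$ forces $m\ge|n|\lambda$, and I set $h_\alpha=g^{n}f^{m}$: applied to $x\in[0,a]$, the contraction $f^m$ first carries $[0,a]$ into $[0,\phi^m_{\mathcal X}(a)]\subseteq[0,\phi^{|n|\lambda}_{\mathcal X}(a)]=[0,g^{|n|}(a)]$ (using $m\ge|n|\lambda$ and monotonicity of $s\mapsto\phi^s_{\mathcal X}(a)$), and on this subinterval $g^{-|n|}$ coincides with the inverse flow $\phi^{-|n|\lambda}_{\mathcal X}$; hence $h_\alpha\vert_{[0,a]}=\phi^{m-|n|\lambda}_{\mathcal X}=\phi^\alpha_{\mathcal X}$. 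The symmetric case $m<0\le n$ is handled by $h_\alpha=f^{m}g^{n}$, exchanging the roles of $f$ and $g$ and using $n\lambda\ge|m|$. The key point making all this work is the quantitative estimate coming from $\alpha>0$, which guarantees that the contracting syllable pushes $[0,a]$ deep enough for the subsequent expanding syllable to remain inside $[0,a]$.

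Finally, the commutation statement becomes immediate once $h_\alpha\vert_{[0,a]}=\phi^\alpha_{\mathcal X}$ is established: the Szekeres flow is a one-parameter (hence abelian) group, so for $x\in[0,a]$ one has $\phi^\alpha_{\mathcal X}(x)\in[0,a]$ (as $\alpha>0$), and therefore
\[
f\,h_\alpha(x)=\phi^1_{\mathcal X}\phi^\alpha_{\mathcal X}(x)=\phi^{1+\alpha}_{\mathcal X}(x)=\phi^\alpha_{\mathcal X}\phi^1_{\mathcal X}(x)=h_\alpha\, f(x),
\]
whence $[f,h_\alpha]\vert_{[0,a]}=[h_\alpha,f]\vert_{[0,a]}=\mathrm{id}\vert_{[0,a]}$. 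Everything beyond the domain-control in the mixed-sign cases is a direct consequence of Szekeres' Theorem~\ref{t:Szekeres} and of the flow being a one-parameter group.
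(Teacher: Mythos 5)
Your argument is correct, and it handles the one genuinely delicate point (keeping all intermediate images inside $[0,a]$, where the identification with the Szekeres flow is valid) by a different device than the paper. The paper takes the word $f^{\ell}g^{m}$ realizing $\alpha=\ell+m\lambda$, observes that it coincides with $\phi^{\alpha}_{\mathcal X}$ only on some small right neighbourhood $[0,y]$ of $0$, and then renormalizes: it sets $h_\alpha=f^{-N}f^{\ell}g^{m}f^{N}$ for $N$ large enough that $f^{N}([0,a])\subset[0,y]$, using $\alpha>0$ to check that the final $f^{-N}$ still acts as $\phi^{-N}_{\mathcal X}$ on the image $[0,\phi^{\alpha+N}_{\mathcal X}(a)]\subset[0,\phi^{N}_{\mathcal X}(a)]$. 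This is uniform in the signs of the exponents and needs no case analysis, at the cost of the auxiliary parameter $N$. You instead order a two-syllable word so that the contracting syllable acts first, and extract from $\alpha>0$ the quantitative inequality ($m\ge|n|\lambda$, resp.\ $n\lambda\ge|m|$) guaranteeing that the subsequent expanding syllable never leaves the interval on which it agrees with the inverse flow; this is more explicit and elementary, but requires the three-case split. Both proofs rest on the same mechanism --- positivity of $\alpha$ lets the contraction absorb the expansion --- and your treatment of the commutation statement (reading it as $fh_\alpha=h_\alpha f$ on $[0,a]$, immediate from the flow property since $1,\alpha>0$) is if anything more explicit than the paper's, which leaves that part implicit.
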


\begin{proof}[Proof of Claim]
Let $\ell,m\in\Z$ be such that $\alpha= \ell+m\lambda$. There exists $y>0$  such that the element $f^\ell g^m$ is equal to $\phi^\alpha_{\mathcal X}$ on the right neighbourhood $[0,y]$. If $y\ge a$, then we set $h_\alpha=f^\ell g^m$ and we are done.

Otherwise, we have $y<a$.
As $f$ is a contraction on $[0,a]$, there exists a positive integer $N\in \N$ such that $f^{N}([0,a])=[0,\phi^N_{\mathcal X}(a)]\subset [0,y]$. Define $h_\alpha=f^{-N}f^\ell g^m f^N$. Then for any $x\in [0,a]$ we have
\begin{align*}
h_\alpha\vert_{[0,a]}(x)=\,&f^{-N}f^\ell g^m f^N\vert_{[0,a]}(x)\\
=\,&f^{-N}f^\ell g^m \vert_{[0,\phi^N_{\mathcal X}(a)]} (\phi^N_{\mathcal X}(x))\\
=\,&f^{-N}\vert_{[0,\phi^{\alpha+N}_{\mathcal X}(a)]}(\phi^{\alpha+N}_{\mathcal X}(x)).
\end{align*}
The element $f^{-N}$ equals $\phi^{-N}_{\mathcal X}$ on the interval $[0,\phi^N_{\mathcal X}(a)]$. Here $\alpha>0$ hence $[0,\phi^{\alpha+N}_{\mathcal X}(a)]\subset [0,\phi^N_{\mathcal X}(a)]$. We conclude that for any $x\in [0,a]$ we have
\[h_\alpha\vert_{[0,a]}(x)=\phi^{-N}_{\mathcal X}\phi^{\alpha}_{\mathcal X}\phi^{N}_{\mathcal X}(x)=\phi^\alpha_{\mathcal X}(x),\]
as desired.
\end{proof}

The claim implies that the group generated by $f$ and $g$ contains a one-parameter flow in its local $C^0$-closure.
Suppose that $\varphi$ is a homeomorphism such that $\varphi \langle f,g,h\rangle\varphi^{-1}$ is in $\Diff_+^2([0,1])$. 

The element $\varphi f\varphi^{-1}$ is a $C^2$ contraction on a right neighbourhood $[0,\varphi(a)]$ of $0$, thus Szekeres' theorem applies again. 
Let $\mathcal{Y}$ denote the Szekeres vector field of $\varphi f\varphi^{-1}$ and let $\{\phi_{\mathcal Y}^s\}$ be the associated one-parameter flow defined on $[0,\varphi(a)]$. 
The elements $\varphi h_\alpha\varphi^{-1}$'s commute with $\varphi f\varphi^{-1}$ on $[0,\varphi(x)]$, hence by Szekeres' theorem we must have 
that their restrictions to $[0,\varphi(a)]$ are contained in the flow $\{\phi_{\mathcal Y}^s\}_{s\ge 0}$. 
Moreover they are \emph{densely} contained because $A$ is dense in $\R$.

\begin{claim}
The restriction of $\varphi$ to $(0,a]$ is $C^1$ and takes the Szekeres vector field $\mathcal X$ of $f$, defined on $[0,a]$, to $\mathcal{Y}$:
\[
\varphi_*\mathcal X=\mathcal Y.
\]
\end{claim}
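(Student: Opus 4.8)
The plan is to realize $\varphi$, in appropriate flow coordinates coming from $\mathcal X$ and $\mathcal Y$, as the identity map, which yields simultaneously its $C^1$ regularity and the conjugacy relation $\varphi_*\mathcal X=\mathcal Y$. First I would record the time-change induced by $\varphi$ between the two flows. By the previous Claim, for each positive $\alpha\in A$ the element $h_\alpha$ restricts to $\phi^\alpha_{\mathcal X}$ on $[0,a]$ and commutes with $f$ there, so $\varphi h_\alpha\varphi^{-1}$ is a $C^2$ diffeomorphism commuting with $\varphi f\varphi^{-1}$ on $[0,\varphi(a)]$. By the centraliser statement in Szekeres' Theorem~\ref{t:Szekeres}, it lies in the flow of $\mathcal Y$: there is $c(\alpha)>0$ with $\varphi h_\alpha\varphi^{-1}=\phi^{c(\alpha)}_{\mathcal Y}$ on $[0,\varphi(a)]$. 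Since $h_\alpha h_\beta=h_{\alpha+\beta}$ on $[0,a]$ for positive $\alpha,\beta$ (both sides equal $\phi^{\alpha+\beta}_{\mathcal X}$), the function $c$ is additive, and because $f=h_1$ on $[0,a]$ while $\mathcal Y$ is the Szekeres field of $\varphi f\varphi^{-1}$, we have $c(1)=1$. Thus $c$ extends to a homomorphism $A\to\R$ with $c(m+n\lambda)=m+n\mu$, where $\mu:=c(\lambda)$.

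Next I would introduce flow coordinates. Fix $x_0\in(0,a)$ and set $\Theta_{\mathcal X}(s)=\phi^s_{\mathcal X}(x_0)$ and $\Theta_{\mathcal Y}(s)=\phi^s_{\mathcal Y}(\varphi(x_0))$. As $\mathcal X$ and $\mathcal Y$ are $C^1$ and nonsingular on the open intervals, and the flows are complete there, these are $C^1$ diffeomorphisms of $\R$ onto $(0,a)$ and $(0,\varphi(a))$, respectively (decreasing, since $\phi^s$ contracts towards $0$ for $s>0$). From $\varphi h_\alpha\varphi^{-1}=\phi^{c(\alpha)}_{\mathcal Y}$, and by inverting to reach negative times, one gets $\varphi\circ\Theta_{\mathcal X}(\alpha)=\Theta_{\mathcal Y}(c(\alpha))$ for all $\alpha\in A$. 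Consequently the increasing homeomorphism $\bar c:=\Theta_{\mathcal Y}^{-1}\circ\varphi\circ\Theta_{\mathcal X}$ of $\R$ coincides on the dense subgroup $A$ with $\alpha\mapsto c(\alpha)$.

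Now I would force $\mu=\lambda$. Since $A$ has rank $2$, $\lambda$ is irrational, so there are integers $(m_k,n_k)$ with $\alpha_k=m_k+n_k\lambda\to 0$ and $|n_k|\to\infty$. If $\mu\neq\lambda$ then $c(\alpha_k)=\alpha_k+n_k(\mu-\lambda)\to\pm\infty$, contradicting the continuity (monotonicity) of $\bar c$, which forces $c(\alpha_k)\to\bar c(0)$ to be finite. Hence $\mu=\lambda$, so $c=\mathrm{id}$ on $A$; as $\bar c$ is an increasing homeomorphism agreeing with the identity on a dense set, $\bar c=\mathrm{id}_{\R}$. Therefore $\varphi=\Theta_{\mathcal Y}\circ\Theta_{\mathcal X}^{-1}$ on $(0,a)$, which is $C^1$ as a composition of $C^1$ diffeomorphisms, and the relation $\varphi\,\phi^s_{\mathcal X}\,\varphi^{-1}=\phi^s_{\mathcal Y}$ gives precisely $\varphi_*\mathcal X=\mathcal Y$. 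For the right endpoint: since $f$ and $\varphi f\varphi^{-1}$ are $C^2$ on the closed intervals $[0,a]$ and $[0,\varphi(a)]$, with $a$ (resp. $\varphi(a)$) not fixed, the vector fields $\mathcal X,\mathcal Y$ extend $C^1$ and nonvanishing up to these endpoints, so $\varphi'=(\mathcal Y\circ\varphi)/\mathcal X$ extends continuously to $a$ and $\varphi$ is $C^1$ on $(0,a]$.

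The main obstacle is precisely the arithmetic rigidity of the last step: ruling out a nontrivial reparametrisation of time (that is, proving $\mu=\lambda$). All the regularity is free once the two flows are matched with the same time parameter, and the point where real content enters is that an orientation-preserving homeomorphism cannot realise the two flows with different speeds, which hinges on the density of $A=\langle 1,\lambda\rangle$ and the irrationality of $\lambda$.
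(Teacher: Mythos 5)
Your proof is correct, but it follows a genuinely different route from the paper's. The paper establishes the Claim by a direct difference-quotient computation: for $x\in(0,a]$ it writes
\[
\frac{\varphi\left(\phi_{\mathcal X}^\alpha(x)\right)-\varphi(x)}{\phi_{\mathcal X}^\alpha(x)-x}=\frac{\phi_{\mathcal Y}^{\alpha}\left(\varphi(x)\right)-\varphi(x)}{\alpha}\cdot\left(\frac{\phi_{\mathcal X}^\alpha(x)-x}{\alpha}\right)^{-1}
\]
and lets $\alpha\to0$ along $A$ to read off $\varphi'(x)=\mathcal Y(\varphi(x))/\mathcal X(x)$; you instead straighten both flows by the coordinates $\Theta_{\mathcal X},\Theta_{\mathcal Y}$ and identify $\varphi$ with the time-change $\bar c$. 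What your version buys is precisely the step the paper passes over in silence: Szekeres' theorem only yields $\varphi h_\alpha\varphi^{-1}=\phi_{\mathcal Y}^{c(\alpha)}$ with $c(m+n\lambda)=m+n\mu$ and $c(1)=1$, and the paper's starting identity $\phi_{\mathcal Y}^{\alpha}(\varphi(x))=\varphi(\phi_{\mathcal X}^{\alpha}(x))$ amounts to asserting $\mu=\lambda$ without proof. Your lattice argument (any $\alpha_k=m_k+n_k\lambda\to0^+$ forces $|n_k|\to\infty$, hence $c(\alpha_k)\to\infty$ if $\mu\neq\lambda$, against $c(\alpha_k)=\bar c(\alpha_k)\to\bar c(0)=0$) supplies exactly this, and in addition sidesteps the mild subtlety that convergence of difference quotients along a dense set of times does not by itself give differentiability — in your setup $\varphi=\Theta_{\mathcal Y}\circ\Theta_{\mathcal X}^{-1}$ is manifestly $C^1$. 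Two small inaccuracies, neither harmful: $\Theta_{\mathcal X}$ is a $C^1$ diffeomorphism of a (half-open) interval of times onto $(0,a)$, not of all of $\R$, since the backward orbit of $x_0$ reaches $a$ in finite time; and the relation $\varphi\circ\Theta_{\mathcal X}(\alpha)=\Theta_{\mathcal Y}(c(\alpha))$ is only cleanly available for $\alpha>0$, which is all your limiting argument actually uses.
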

\begin{proof}[Proof of Claim]
For any $x\in [0,a]$ and $\alpha\in A$, $\alpha>0$, we have
\[
\phi_{\mathcal Y}^\alpha\left (\varphi(x)\right )=\varphi\left (\phi_{\mathcal X}^\alpha(x)\right ).
\]
Now, $\phi_{\mathcal X}^\alpha(x)\neq x$ for any $\alpha>0$, $x\in (0,a]$. Thus for any $x\in (0,a]$ and $\alpha\in A$, $\alpha>0$, we have
\begin{align*}
\frac{\phi_{\mathcal Y}^\alpha\left (\varphi (x)\right )-\varphi(x)}{\alpha}&= \frac{\varphi\left (\phi_{\mathcal X}^\alpha(x)\right )-\varphi(x)}{\alpha} \\
&= \frac{\varphi\left (\phi_{\mathcal X}^\alpha(x)\right )-\varphi(x)}{\phi^\alpha_{\mathcal X}(x)-x}\cdot \frac{\phi^\alpha_{\mathcal{X}}(x)-x}{\alpha}.
\end{align*}
Hence
\[
\frac{\varphi\left (\phi_{\mathcal X}^\alpha(x)\right )-\varphi(x)}{\phi^\alpha_{\mathcal X}(x)-x} = \frac{\phi_{\mathcal Y}^\alpha\left (\varphi (x)\right )-\varphi(x)}{\alpha} \cdot \left(\frac{\phi^\alpha_{\mathcal{X}}(x)-x}{\alpha}\right )^{-1}.
\]
Taking the limit on both sides as $\alpha\in A$, $\alpha>0$, goes to $0$ (recall that $A$ is dense in $\R$), we get on the left hand side the derivative $\varphi'(x)$ and on the right hand side the ratio $\mathcal Y\left (\varphi(x)\right )/\mathcal{X}(x)$ (here
we identify $\mathcal X,\mathcal Y$ to $C^1$ functions). Observe that the ratio $\mathcal Y\left (\varphi(x)\right )/\mathcal{X}(x)$ is well-defined because $\mathcal{X}$ has no singularities on $(0,a)$.

This gives the two desired statements. Indeed, as  $\mathcal Y\left ( \varphi(x)\right )/\mathcal{X}(x)$ is $C^0$ on $(0,a]$, so is $\varphi'$. Hence $\varphi$ is $C^1$ on $(0,a]$. Moreover, taking $\mathcal X(x)$ to the left hand side, we get 
\[
\varphi'(x)\cdot \mathcal{X}(x)=\mathcal Y\left (\varphi(x)\right )\quad\text{for any }x\in [0,a],
\]
that is $\varphi_*\mathcal X=\mathcal Y$, as wanted.
\end{proof}
Now we can conclude the proof. From the previous Claim, we write
\[
\varphi'(x)=\frac{\mathcal Y\left (\varphi(x)\right )}{\mathcal X(x)}\quad\text{for every }x\in (0,a]. 
\]
Moreover the previous Claim gives that the right hand side in the last expression is at least $C^1$ on $(0,a]$ and therefore the same holds for $\varphi'$. This implies that $\varphi$ is $C^2$ on $(0,a]$, as desired.
\end{proof}

\subsection{Thompson-Stein groups}

We finally apply the previous result to prove that the Thompson-Stein groups are not $C^2$-smoothable.
\begin{proof}[Proof of Theorem~\ref{t:F23}]
In the group $F(n_1,\ldots,n_k)$, $k\ge 2$, it is possible to find two elements $f,g$ fixing $0$ such that $f'(0)=1/n_1$, $g'(0)=1/n_2$. Let $a\in (0,1)$ be such that $f,g$ are linear contractions in restriction to $[0,a]$. Consider any element $h$ which is the identity in restriction to $[0,a/2]$ but not in restriction to $[0,a]$. Then there exists $t\in [a/2,a)$ which is a $C^1$ discontinuity point of $h$ with $h(t)\in [a/2,a)$ (actually we may take for $t$ the leftmost point in the support of $h$). Thus we apply Theorem~\ref{t:nonC2} and conclude that the natural action on $[0,1]$ of the group generated by $f,g$ and $h$ is not $C^2$-smoothable. In particular the natural action of $F(n_1,\ldots,n_k)$ on $[0,1]$ is not $C^2$-smoothable.
\end{proof}

\begin{proof}[Proof of Corollary~\ref{t:T23}]
From \cite[Theorem 3.A]{Liousse},  every faithful $C^2$ action of $T(2,n_2,\ldots,n_k)$ on $\T$ is topologically conjugate to its standard piecewise linear action. However $T(2,n_2,\ldots,n_k)$ contains $F(2,n_2,\ldots,n_k)$ as a subgroup, whose standard action on $\T$ cannot be conjugate to a $C^2$ action, after our Theorem~\ref{t:F23}.
Therefore a $C^2$ action of $T(2,n_2,\ldots,n_k)$ cannot be faithful. 

As in \cite[Theorem 3.B']{Liousse}, if we assume furthermore $n_2=3$, the simplicity of $T(2,3,n_3,\ldots,n_k)$ allows to conclude that every $C^2$ action of such a group is trivial.
\end{proof}

\section*{Acknowledgements} 
The authors thank Isabelle Liousse for her valuable comments on the first version of the paper (Corollary~\ref{t:T23} is due to her). \emr{Many thanks also go to the referee for his/her suggestions and for pointing out the many imprecise earlier algebraic statements about $A_\lambda$.}
This work has been done during visits of the authors to the Institut de Math\'ematiques de Bourgogne and the \'Ecole Polytechnique F\'ed\'erale de Lausanne. The authors thank these institutions for the welcoming atmosphere. We are also grateful to the members of the IMB participating to the working seminar ``Dynamique des actions de groupes'' during Autumn 2017, as well as to the organizers of the conference \emph{Dynamics Beyond Uniform Hyperbolicity} held in BYU, Provo, June 2017.
M.T. was partially supported by PEPS -- Jeunes Chercheur-e-s -- 2017 (CNRS).
\begin{bibdiv}
\begin{biblist}

\bib{ARX}{article}{
	author = {\scshape Alonso, J.},
	author = {\scshape Rivas, C.},
	author = {\scshape Xavier, J.},
	title = {Planar Baumslag-Solitar actions},
	note = {arXiv:1707.05675},	
}

\bib{AGRX}{article}{
	author = {\scshape Alonso, J.},
	author = {\scshape Guelman, N.},
	author = {\scshape Rivas, C.},
	author = {\scshape Xavier, J.},
	title = {One some planar Baumslag-Solitar actions},
	note = {arXiv:1703.09102},	
}

\bib{A1}{article}{
	author = {\scshape Asaoka, M.},
	title = {Rigidity of certain solvable actions on the sphere},
	journal = {Geom. Topol.},
	volume = {16},
	number = {3},
	year = {2012},
	pages = {1835--1857},	
}

\bib{A2}{article}{
	author = {\scshape Asaoka, M.},
	title = {Rigidity of certain solvable actions on the torus},
	year = {2014},
	note= {\url{https://www.math.kyoto-u.ac.jp/~asaoka/papers/rigidity_torus.pdf}},
}

\bib{baik-kim-koberda}{article}{
	author={\scshape Baik, H.},
	author={\scshape Kim, S.-H.},
	author={\scshape Koberda, S.},
	title={Unsmoothable group actions on compact one-manifolds},
	journal={J. Eur. Math. Soc. (JEMS)},
	year={to appear}
}

\bib{kazhdan}{book}{
	AUTHOR = {\scshape Bekka, B.},
	author={\scshape de la Harpe, P.},
	author={\scshape Valette, A.},
     TITLE = {Kazhdan's property ({T})},
    SERIES = {New Mathematical Monographs},
    VOLUME = {11},
 PUBLISHER = {Cambridge University Press, Cambridge},
      YEAR = {2008},
     PAGES = {xiv+472},
}

\bib{bieri-strebel}{book}{
    AUTHOR = {\scshape Bieri, R.},
    Author ={\scshape Strebel, R.},
     TITLE = {On groups of {PL}-homeomorphisms of the real line},
    SERIES = {Mathematical Surveys and Monographs},
    VOLUME = {215},
 PUBLISHER = {American Mathematical Society, Providence, RI},
      YEAR = {2016},
     PAGES = {xvii+174},
}

\bib{fast}{article}{
	author={\scshape Bleak, C.},
	author={\scshape Brin, M.G.},
	author={\scshape Kassabov, M.},
	author={\scshape Tatch Moore, J.},
	author={\scshape Zaremsky, M.C.B.},
	title={Groups of fast homeomorphisms of the interval and the ping-pong argument},
	year={2017},
	note={Arxiv preprint \url{https://arxiv.org/abs/1701.08321}}
}

\bib{centralizers}{article}{
	author={\scshape Bonatti, C.},
	author={\scshape Farinelli, \'E.},
	title={Centralizers of $C^1$-contractions of the half line},
	journal={Groups Geom. Dyn.},
	year={2015},
	volume={9},
	pages={831\ndash 889}
}

\bib{hyperbolic}{article}{
author={\scshape Bonatti, C.},
author={\scshape Monteverde, I.},
author={\scshape Navas, A.},
author={\scshape Rivas, C.},
title={Rigidity for $C^1$ actions on the interval arising from hyperbolicity I: solvable groups},
journal={Math. Z.},
year = {2017},
volume = {286},
number ={3-4},
pages = {919--949},
}

\bib{brin-squier}{article}{
author = {\scshape Brin, M.G.},
author ={\scshape Squier, C.C.},
journal = {Invent. Math.},
pages = {485\ndash 498},
title = {Groups of piecewise linear homeomorphisms of the real line},
volume = {79},
year = {1985},
}

\bib{commutators}{article}{
   author = {\scshape Burillo, J.},
   author ={\scshape Lodha, Y.},
   author={\scshape Reeves, L.},
    title = {Commutators in groups of piecewise projective homeomorphisms},
    note={Arxiv preprint \url{https://arxiv.org/abs/1509.04586}},
    year = {2015},
 }

\bib{burger-monod}{article}{
author={\scshape Burger, M.},
author={\scshape Monod, N.},
title={Bounded cohomology of lattices in higher rank Lie groups},
journal={J. Eur. Math. Soc. (JEMS)},
volume={1},
year={1999},
number={2},
pages={199\ndash 235}
}

\bib{BW}{article}{
	author = {\scshape Burslem, L.},
	author = {\scshape Wilkinson, A.},
	title = {Global rigidity of solvable group actions on $\T$},
	journal = {Geom. Topol.},
	year = {2004},
	volume = {8},
	number = {2},
	pages = {877--924},	
}

\bib{forcing}{article}{
	author={\scshape Calegari, D.},
	title= {Dynamical forcing of circular groups},
	journal={Trans. Amer. Math. Soc.},
	volume={358},
	year={2006},
	number={8},
	pages={3473\ndash 3491}
}

\bib{calegari}{article}{
	author={\scshape Calegari, D.},
	title={Nonsmoothable, locally indicable group actions on the interval},
	journal={Algebr. Geom. Topol.},
	volume={8},
	year={2008},
	pages={609\ndash 613},
	number={1} 
}

\bib{CFP}{article}{
	author={\scshape Cannon, J.W.},
	author={\scshape Floyd, W.J.},
	author={\scshape Parry, W.R.},
	title={Introductory notes on Richard Thompson's groups},
	year={1996},
	journal={Enseign. Math. (2)},
	volume={42},
	number={3-4},
	pages={215\ndash 256}	
}

\bib{tullio}{book}{
	author={\scshape Ceccherini-Silberstein, T.},
	author={\scshape Coornaert, M.},
	title={Cellular automata and groups},
	publisher={Springer New York},
	year={2009}
}

\bib{DKNacta}{article}{
	author={\scshape Deroin, B.},
	author={\scshape Kleptsyn, V.},
	author={\scshape Navas, A.},
	title={Sur la dynamique unidimensionnelle en r\'egularit\'e interm\'ediaire},
	journal={Acta Math.},
	volume={199},
	year={2007},
	pages={199\ndash 262}
}

\bib{FarbFranksI}{article}{
	author = {\scshape Farb, B.},
	author = {\scshape Franks, J.},
	title = {Groups of homeomorphisms of one-manifolds, I: actions of nonlinear groups},
	note={Arxiv preprint \url{https://arxiv.org/abs/0107085}},
	year = {2011},
}

\bib{ghys}{article}{
author={\scshape Ghys, \'E.},
title={Actions de r\'eseaux sur le cercle},
journal={Invent. Math.},
volume={137},
year={1999},
number={1},
pages={199\ndash 231}
}

\bib{GS}{article}{
author={\scshape Ghys, \'E.},
author={\scshape Sergiescu, V.},
title={Sur un groupe remarquable de diff\'eomorphismes du
cercle},
journal={Comment. Math. Helv.},
volume={62},
number={2},
year={1987},
pages={185\ndash 239}
}

\bib{GL}{article}{
	author={\scshape Guelman, N.},
	author={\scshape Liousse, I.},
	title={$C^1$-actions of Baumslag–Solitar groups on $S^1$},
	journal={Algebr. Geom. Topol.},
	volume={11},
	year={2011},
	pages={1701\ndash 1707}
}

\bib{GL2}{article}{
	author={\scshape Guelman, N.},
	author={\scshape Liousse, I.},
	title={Actions of Baumslag–Solitar groups on surfaces},
	journal = {Disc. Cont. Dyn. Sys.},
	volume = {33},
	number={5},
	year = {2013},
	pages = {1945--1964},
}

\bib{chains}{article}{
	author={\scshape Kim, S.-H.},
	author={\scshape Koberda, T.},
	author={\scshape Lodha, Y.},
	title={Chain groups of homeomorphisms of the interval and the circle},
	year={2016},
	journal = {Annales Sci. de l'\'Ecole Normale Sup\'erieure},
	year = {to appear},
}

\bib{Liousse}{article}{
	author={\scshape Liousse, I.},
	title={Rotation numbers in Thompson-Stein groups and applications},
	journal={Geom. Dedic.},
	year={2008},
	pages={49\ndash 71},
	volume={131},
	number={1},
}

\bib{LodhaMoore}{article}{
	author={\scshape Lodha, Y.},
	author={\scshape Tatch Moore, J.},
	title={A nonamenable finitely presented group of piecewise projective homeomorphisms},
	year={2016},
	pages={177-200},
	journal={Groups Geom. Dyn.},
	volume={10},
	number={1},	
}

\bib{McCarthy}{article}{
	author = {\scshape McCarthy, A.},
	title = {Rigidity of trivial actions of Abelian-by-cyclic groups},
	journal = {Proc. Amer. Math. Soc.},
	volume = {138},
	number = {4},
	year = {2010},
	pages = {1395--1403},	
}

\bib{minakawa1}{article}{
	author={\scshape Minakawa, H.},
	title={Exotic circles of $\mathrm{PL}_+(S^1)$},
	journal={Hokkaido Math. J.},
	pages={567\ndash 573},
	volume={24},
	number={3},
	year={1995}
}

\bib{minakawa2}{article}{
	author={\scshape Minakawa, H.},
	title={Classification of exotic circles of $\mathrm{PL}_+(S^1)$},
	journal={Hokkaido Math. J.},
	pages={685\ndash 697},
	volume={26},
	number={3},
	year={1997}
}

\bib{monod}{article}{
	author={\scshape Monod, N.},
  title={Groups of piecewise projective homeomorphisms},
  journal={Proc. Natl. Acad. Sci. USA}, 
  volume={110},
  year={2013},
  number={12},
  pages={4524\ndash 4527},
}

\bib{navas(T)}{article}{
	author={\scshape Navas, A.},
	title={Actions de groupes de Kazhdan sur le cercle},
	journal={Annales Sci. de l'\'Ecole Normale Sup\'erieure}, 
	volume={35},
	year={2002},
	pages={789\ndash 758}
}
\bib{navas-rel}{article}{
author={\scshape Navas, A.},
title={Quelques nouveaux ph\'nom\`enes de rang $1$ pour les groupes de diff\'eomorphismes du cercle},
journal={Comment. Math. Helv.},
volume={80},
year={2005},
pages={355\ndash 375}
}

\bib{navas-locind}{article}{
author={\scshape Navas, A.},
title={A finitely generated, locally indicable group with no faithful action by $C^1$ diffeomorphisms of the interval},
journal={Geom. Topol.},
volume={14},
year={2010},
number={1},
pages={573\ndash 584}
} 

\bib{navas-book}{book}{
	author={\scshape Navas, A.},
	title={Groups of circle diffeomorphisms},
	Publisher = {University of Chicago Press, Chicago, IL},
	Year = {2011},
	Series = {Chicago Lectures in Mathematics},
	Pages = {xviii+290},
}

\bib{parwani}{article}{
	AUTHOR = {\scshape Parwani, K.},
     TITLE = {{$C^1$} actions on the mapping class groups on the circle},
   JOURNAL = {Algebr. Geom. Topol.},
    VOLUME = {8},
      YEAR = {2008},
    NUMBER = {2},
     PAGES = {935\ndash 944},
}

\bib{stein}{article}{
	author={\scshape Stein, M.I.},
	title={Groups of piecewise linear homeomorphisms},
	journal={Trans. Amer. Math. Soc.},
	volume={332},
	year={1992},
	number={2},
	pages={477\ndash 514}
}

\bib{szekeres}{article}{
	author={\scshape Szekeres, G.},
	title={Regular iteration of real and complex functions},
	journal={Acta Math.},
	volume={100},
	year={1958},
	pages={203\ndash 258},
}

\bib{Th}{article}{
	author={\scshape Thurston, W.P.},
	title={A generalization of the Reeb stability theorem},
	journal={Topology},
	volume={13},
	year={1974},
	pages={347\ndash 352}
}

\bib{WX}{article}{
	author = {\scshape Wilkinson, A.},
	author = {\scshape Xue, J.},
	title = {Rigidity of some Abelian-by-cyclic solvable group actions on $\mathrm{T}^N$},
	note = {\url{http://www.math.uchicago.edu/~wilkinso/papers/rigidity_solvable.pdf}},	
}

\bib{witte}{article}{
author={\scshape Witte, D.},
title={Arithmetic groups of higher $\Q$-rank cannot act on $1$-manifolds},
journal={Proc. Amer. Math. Soc.},
volume={122},
year={1994},
number={2},
pages={333\ndash 340},
}

\end{biblist}
\end{bibdiv}

\end{document}